\newtheorem{thm}{Theorem}[section]
\newtheorem{lemma}[thm]{Lemma}
\newtheorem{cor}[thm]{Corollary}
\theoremstyle{definition}
\newtheorem{defn}[thm]{Definition}
\newtheorem{cond}[thm]{Condition}
\newtheorem{ex}[thm]{Example}
\newtheorem{algor}[thm]{Algorithm}
\newtheorem{rmk}[thm]{Remark}
\newcommand{\N}{\mathbb{N}}
\newcommand{\Z}{\mathbb{Z}}
\newcommand{\C}{\mathbb{C}}
\newcommand{\CP}{\mathbb{CP}}
\newcommand{\PSL}{\mathrm{PSL}(2,\mathbb{C})}
\newcommand{\Isom}{\text{Isom}}
\newcommand{\Hthree}{\mathbb{H}^3}
\newcommand{\til}[1]{\widetilde{#1}}
\newcommand{\bdry}{\partial}
\newcommand{\ord}{\text{order}}
\newcommand{\tri}{\bigtriangleup}
\title{A generalisation of the deformation variety}
\author{Henry Segerman}
\begin{document}

\maketitle

\begin{abstract}
Given an ideal triangulation of a connected 3-manifold with non-empty boundary consisting of a disjoint union of tori, a point of the deformation variety is an assignment of complex numbers to the dihedral angles of the tetrahedra subject to Thurston's gluing equations. From this, one can recover a representation of the fundamental group of the manifold into the isometries of 3-dimensional hyperbolic space. However, the deformation variety depends crucially on the triangulation: there may be entire components of the representation variety which can be obtained from the deformation variety with one triangulation but not another. We introduce a generalisation of the deformation variety, which again consists of assignments of complex variables to certain dihedral angles subject to polynomial equations, but together with some extra combinatorial data concerning degenerate tetrahedra. This ``extended deformation variety'' deals with many situations that the deformation variety cannot. In particular we show that for any ideal triangulation of a small orientable 3-manifold with a single torus boundary component, we can recover all of the irreducible non-dihedral representations from the associated extended deformation variety. More generally, we give an algorithm to produce a triangulation of a given orientable 3-manifold with torus boundary components for which the same result holds. As an application, we show that this extended deformation variety detects all factors of the $\PSL$ A-polynomial associated to the components consisting of the representations it recovers. 
\end{abstract}

\section{Introduction}

In Thurston's ground-breaking notes \cite{thurston}, explicit computations of hyperbolic structures on cusped 3-manifolds are given. The strategy is as follows: First we decompose the 3-manifold $M$ (which we will assume to be orientable throughout this paper) into ideal tetrahedra\footnote{An ideal tetrahedron is a tetrahedron missing its vertices, which are to be thought of as being out at the cusp(s) of the manifold.} in some way, giving an ideal triangulation of the manifold. Next, we try to give each tetrahedron the geometry of an ideal hyperbolic tetrahedron\footnote{An ideal hyperbolic tetrahedron (with distinct ordered vertices) is the intersection of $\Hthree$ with the convex hull of 4 point on the sphere at infinity of $\Hthree$, which we write as $\bdry\Hthree$.}, embedded in $\Hthree$. Thurston considers a system of polynomial equations, the \textbf{gluing equations}, whose (complex) variables describe the shapes of the ideal hyperbolic tetrahedra. The equations are satisfied if and only if the ideal hyperbolic tetrahedra fit together properly around the edges, and a solution of these determines a representation of $\pi_1M$ into $\Isom(\Hthree)$. He also defined additional polynomial equations, the \textbf{completeness equations}, and showed that a solution to the gluing and completeness equations gives a discrete and faithful representation of $\pi_1M$ (as long as all tetrahedron shapes specified by the solution are positively oriented), and so a complete hyperbolic structure on $M$.  Solutions of the gluing equations near to the complete structure give incomplete hyperbolic structures, which we can view as deformations of the complete structure. The ``deformation variety'' is the affine algebraic set determined by the gluing equations, and therefore contains this family of deformations of the complete structure. Note that the deformation variety depends on the ideal triangulation in an intrinsic way, so is not an invariant of $M$.\\

The representations of $\pi_1M$ in the above construction arise as \textbf{holonomy representations} of \textbf{developing maps}. The developing map is a map $\Phi$ from the universal cover $\til{M}$ of $M$ to $\Hthree$, which can be thought of as being constructed by building a copy of $\til{M}$ in $\Hthree$ out of (possibly overlapping) ideal hyperbolic tetrahedra of the appropriate shapes. The holonomy representation of a developing map is the unique representation $\rho:\pi_1M\to\Isom(\Hthree)$ such that $\Phi$ is equivariant with respect to $\rho$. A point of the deformation variety determines a developing map up to conjugacy by elements of $\Isom(\Hthree)$, and so the holonomy representation is also only defined up to conjugacy. Thus we can also think of it as encoding essentially the same information as a point of the $\PSL$ character variety.\\

If all of the tetrahedron shapes are strictly positively oriented, then the developing map is a local homeomorphism, and we get a hyperbolic structure on the manifold. In general this isn't true if there are flat or negatively oriented tetrahedron shapes. However, the algebraic (as opposed to geometric) interpretation as a representation (up to conjugacy) into $\Isom(\Hthree)\cong\PSL$ holds, and we can think of the map from the deformation variety to the $\PSL$ character variety of $M$ as a parameterisation of the character variety. For computational purposes, this is by far the most effective way of describing the character variety known (as implemented in SnapPea and SnapPy \cite{snappea, snappy}, and also recent work by Culler on calculating the A-polynomial, see Section \ref{8 20}). However, the character variety is a canonical object, while the deformation variety is not, depending on the choice of triangulation. Many applications are concerned only with the complete hyperbolic structure of a manifold with torus boundary components, and for these uses the dependence is a minor disadvantage. If an ideal triangulation of a cusped hyperbolic manifold has all edges \textbf{essential} (an edge is essential if it cannot be homotoped into the boundary of the manifold) then the associated deformation variety has a component which maps to the Dehn surgery component of the character variety, which contains the complete structure. See~\cite{tillmann_degenerations}.\\

However, for applications that involve structures corresponding to components of the character variety other than the Dehn surgery component (for example, in finding ideal points in order to detect incompressible surfaces, as in \cite{yoshida91, tillmann_degenerations, segerman_torus_bundles}, or calculating the A-polynomial), this dependence becomes troublesome. Depending on the triangulation, entire components may be missing from the deformation variety, even if the triangulation is minimal. The problem arises when the shape of an ideal hyperbolic tetrahedron would be degenerate, i.e. that the positions of its vertices on $\bdry \Hthree$ are not distinct. This means that the (supposed) complex number associated to the shape of the tetrahedron would be 0, 1 or $\infty$. Thus these degenerate shapes do not appear as solutions to the gluing equations. In Sections \ref{LLR_ex_part_1} and \ref{8 20} we give examples of this behaviour. Moreover, there doesn't currently seem to be a general method for finding a ``good'' ideal triangulation for which the associated deformation variety does not miss components, and it isn't even known if such a triangulation exists in general. \\

The dependence on the triangulation is the issue we tackle in this paper. We introduce a generalisation of the deformation variety\footnote{For clarity, henceforth we refer to the deformation variety as the ``standard deformation variety''.} for orientable manifolds, which to a large extent solves the problem of dependence on the triangulation, whilst retaining many useful features of the standard deformation variety (in particular, computation is not significantly more complicated in many cases). Our generalisation, the ``extended deformation variety'', is another affine complex variety. It still uses a triangulation as part of its data, and it includes the standard deformation variety as a subset. The points of the extended deformation variety also give conjugacy classes of representations of $\pi_1M$ into $\PSL$, and the map from the extended deformation variety onto the character variety may strictly reduce dimension. However, as long as the triangulation satisfies certain mild conditions, this map is guaranteed to be surjective (onto irreducible non-dihedral elements of the representation variety, up to conjugation).\\

Notation for the following result: Roughly, a \textbf{horo-normal surface} is a normal surface that intersects each edge of the triangulation either zero or two times, and so cuts the manifold into an inside and outside region, the latter containing the cusp(s)\footnote{The lift of the surface in the universal cover acts similarly to a horosphere in $\Hthree$, hence the name. }. A horo-normal surface is \textbf{porous} if the preimage (under the covering map) of the inside region in the universal cover of the manifold is connected, and for every cusp, there is some tetrahedron incident to that cusp that the inside region intersects. See Definitions \ref{E_0_to_horo-normal} and \ref{porous} for details. $\widehat{\mathfrak{D}}(M;\mathcal{T})$ is the extended deformation variety for $M$ with triangulation $\mathcal{T}$, $\mathfrak{R}(M)$ is the $\PSL$ representation variety for $M$, and $\mathfrak{R}_{\mathcal{T}}$ is a canonical (up to conjugation) map from the former to the latter. A \textbf{dihedral} representation $\rho$ is such that $\rho(\pi_1M)$ is of the form $A \rtimes \Z_2$ with $\Z_2$ acting on $A$ by inverting elements. 

\begin{thm}\label{xdv_for_all_rho_if_all_horo_omni}
Let $M$ be the interior of a compact, connected, orientable 3-manifold with non-empty boundary consisting of a disjoint union of tori. Let $\mathcal{T}$ be an ideal triangulation of $M$ such that every horo-normal surface is porous.  Then $\mathfrak{R}_{\mathcal{T}} : \widehat{\mathfrak{D}}(M;\mathcal{T}) \to \mathfrak{R}(M)$ maps onto the irreducible non-dihedral representations, up to conjugation.
\end{thm}

This is proved in Section \ref{Representations}. The condition of every horo-normal surface being porous is relatively mild. For example, in Theorem \ref{small irred one cusp}, we show that for \emph{any} ideal triangulation of a small (i.e.\thinspace for which every closed incompressible surface is boundary parallel) irreducible manifold with a single cusp, every horo-normal surface is porous. Thus, for these manifolds any triangulation gives Theorem \ref{xdv_for_all_rho_if_all_horo_omni}, even in cases when the standard deformation variety misses components, or is even empty. \\

However, when the manifold has more than one cusp, or is not small, a given triangulation may not have every horo-normal surface porous. In contrast to the situation with the standard deformation variety, we give an algorithm (in Section \ref{Retriangulating}) for finding a ``good'' triangulation, for which every horo-normal surface is porous. From this, we get the following version of the result:

\begin{thm}\label{xdv_for_all_rho}
Let $M$ be the interior of a compact, connected, orientable 3-manifold with non-empty boundary consisting of a disjoint union of tori. Then there exists an ideal triangulation $\mathcal{T}_*$ of $M$ such that  $\mathfrak{R}_{\mathcal{T}_*} : \widehat{\mathfrak{D}}(M;\mathcal{T}_*) \to \mathfrak{R}(M)$ maps onto the irreducible non-dihedral representations, up to conjugation.
\end{thm}

\begin{proof}
This follows from Corollary \ref{get_all_porous} (which gives us the ideal triangulation $\mathcal{T}_*$, for which every horo-normal surface is porous) and Theorem \ref{xdv_for_all_rho_if_all_horo_omni}.
\end{proof}

Note that although the extended deformation variety still requires a particular choice of triangulation in its definition, the above result shows that we get all of the irreducible non-dihedral representations. Thus, this gives a way to describe representations using shapes of ideal tetrahedra, but in a triangulation-independent way, in the sense of describing all irreducible non-dihedral representations. \\

In Section \ref {A-polynomial} we give an application relevant to calculating the $\PSL$ A-polynomial, with the following result. Notation: The polynomials $H^{(\mathcal{T}_*;S)}(l,m)$ are the factors of the A-polynomial detected by the subset of the extended deformation variety with tetrahedra degenerate in the way described by the horo-normal surface $S$ (see Section \ref{The extended deformation variety} for details).

\begin{thm}\label{thm_get_A-poly2}
Let $N$ be a connected topologically finite 3-manifold with a single torus boundary component. Then there exists an ideal triangulation $\mathcal{T}_*$ of $N$ so that the polynomials $H^{(\mathcal{T}_*;S)}(l,m)$, ranging over each horo-normal surface $S$, contain all factors of the $\PSL$ A-polynomial of $N$ associated to components of irreducible non-dihedral representations.\end{thm}

In contrast, the corresponding result for the standard deformation variety tells us only that the associated polynomial $H(l,m)$ divides the $\PSL$ A-polynomial (see Champanerkar~\cite{champanerkar_thesis}). Using this result, we get all factors of the $\PSL$ A-polynomial associated to components of irreducible non-dihedral representations. However, it is not currently clear whether or not those factors could be repeated for different horo-normal surfaces, and so this doesn't necessarily give us the A-polynomial outright.\\

As an example, in Section \ref{8 20} we calculate a factor of the $\PSL$ A-polynomial of the complement of the knot $8_{20}$ that was not found in calculations by Marc Culler that used only the standard deformation variety associated to the triangulation computed by Joe Christy. The extra factor comes from a component of the character variety that is missed by the standard deformation variety with many choices of triangulation. In contrast, the extended deformation variety detects this component and so detects the factor of the A-polynomial.\\

The author thanks Marc Culler, Eric Katz, Thomas Mattman, Alan Reid, Stephan Tillmann and Genevieve Walsh  for helpful discussions, and the anonymous referee, whose very helpful comments greatly improved the introduction and motivation sections of the paper. This work was partially supported by an NSF RTG grant, and partially by Australian Research Council grant DP1095760.

\section{Motivation}

A key ingredient of the construction of the extended deformation variety is a tree associated to $\C((\zeta))$, the set of Laurent series over the complex numbers. This tree is a special case of the Bruhat-Tits building for $\text{GL}(2,F)$, where $F$ is a field with a discrete rank 1 valuation. In our case $F=\C((\zeta))$ and the valuation assigns to a Laurent series the minimal degree of its non-zero terms. Actions of 3-manifold groups on this tree have been widely studied, particularly in order to construct incompressible surfaces. See \cite{cullershalen83, handbook_shalen} (particularly the latter for background on this section).\\

 Given any algebraic curve $C$ in the $\text{SL}(2,\C)$ representation variety of $M$, one obtains a ``tautological'' representation of $\pi_1M$ into $\text{SL}(2,\C(C)).$ This follows by viewing the four coordinates of the representation into $\text{SL}(2,\C)$ as functions on $C$, and hence in $\C(C).$ A smooth point $p$ of $C$ determines a discrete valuation on $\C(C)$. There is a natural embedding of $\C(C)$ into $\C((\zeta))$, obtained by expanding rational functions on $C$ as Laurent series in a local coordinate $\zeta$, where $\zeta=0$ corresponds to $p$. Moreover, the restriction of the standard valuation on $\C((\zeta))$ agrees with the valuation determined by $p$. Using this, we get a representation into $\text{SL}(2,\C((\zeta)))$ for each such point $p$. We can think of this as giving a parameterisation of a neighbourhood of $p$ by the variable $\zeta$.\\

An action on the Bruhat-Tits tree is \textbf{trivial} if some vertex is fixed by every element of $\pi_1M$.  When $p$ is an ideal point we get a non-trivial action on the Bruhat-Tits tree, from which the existence of an incompressible surface follows. However, for the purposes of this paper, the relevant actions are the trivial actions, which give no information about incompressible surfaces. The stabiliser of a vertex fixed by every element of $\pi_1M$ is conjugate to a subgroup of $\text{SL}(2,\mathcal{O})$, where $\mathcal{O}$ is the valuation ring of $\C((\zeta))$, in this case the subring $\C[[\zeta]]$ of power series in $\zeta$. Setting $\zeta=0$, we get a representation into $\text{SL}(2,\C)$ (and so into $\PSL$ by projecting). This gives a natural way to associate a point of the character variety to one of these trivial actions. Although this is inaccurate in a way which will become clear later in this section, it is useful to think of a point of the extended deformation variety as encoding this trivial action in terms of shapes of possibly degenerate ideal tetrahedra. From the action on the tree, one can then recover the point of the character variety, which would give Theorem \ref{xdv_for_all_rho_if_all_horo_omni}.\\

We will not in fact construct such an action, but it is helpful to consider the reverse process: Suppose we are given an ideal triangulation $\mathcal{T}$ of a manifold $M$ and an irreducible non-dihedral representation $P:\pi_1M\to\text{SL}(2,\C((\zeta)))$ which is trivial in the sense that a vertex of the associated Bruhat-Tits tree $T_\zeta$ is fixed by the entire image of $P$. (Therefore we can actually assume that  $P:\pi_1M\to\text{SL}(2,\C[[\zeta]])$.) Then, we can construct a $\pi_1M$-equivariant map $\Psi$ from the universal cover $\til{M}$ of $M$ (with vertices of $\til{\mathcal{T}}$  adjoined, corresponding to the cusps of $\til{M}$) to the tree $T_\zeta$ (with ends adjoined) which sends each tetrahedron to the convex hull in $T_\zeta$ of a possibly degenerate 4-tuple of ends of $T_\zeta$. Since $M$ has torus boundary components, the stabiliser of a cusp of $\til{M}$ is abelian, so fixes at least one end of $T_\zeta.$ Choose one cusp of $\til{M}$ from each orbit (i.e.\thinspace one for each cusp of $M$), and map it to an arbitrarily chosen end with the same stabiliser, and then extend equivariantly.  This determines where the vertices of the triangulation $\til{\mathcal{T}}$ map to. Extend the map over the edges by sending each edge to the geodesic line joining the images of the endpoints, and then extend over the higher skeleta so that each tetrahedron is mapped into the convex hull of its vertices. The image of this map will contain a fixed vertex $H$ of $T_\zeta$, and with appropriate choices for extending the map to the edges, triangles and tetrahedra, the preimage of the set of midpoints of edges in $T_\zeta$ is a $\pi_1M$-invariant surface $\til{S}$, which is normal relative to the triangulation $\til{\mathcal{T}}$. The image of $\til{S}$ under the covering projection is a normal surface $S$ in $M$, and is a dual surface for the action on $T_\zeta$. The image of one of the complementary components of $\til{S}$ in $\til{M}$, $\til{R_\text{in}}$ say, contains the fixed vertex $H$. Throw away any components of $\til{S}$ that are not incident to $\til{R_\text{in}}$ to obtain a normal surface $\til{S'}$, which is once again $\pi_1M$-equivariant, so projects to a normal surface $S'$ in $M$. The region $\til{R_\text{in}}$ similarly projects down to a component $R_\text{in}$ which carries $\pi_1M$.\\

One might expect that $S'$ consists of vertex-linking tori. That is, that $S'$ has exactly one copy of each normal triangle, and no quads. In general however, there can be some quads. In either case, $S'$ is an example of a porous horo-normal surface studied in this paper. \\

A key example illustrating how there can be quads in the porous horo-normal surface is if a single edge $e$ of the ideal triangulation $\mathcal{T}$ of a hyperbolic manifold $M$ is inessential, i.e.\thinspace homotopic into $\bdry M$.  Note that by \cite{st_essential}, the standard deformation variety for such a triangulation is empty. However, following the above construction starting from a generic point of the Dehn surgery component and producing a representation into $\text{SL}(2,\C((\zeta)))$,
we then get a horo-normal surface which is the boundary of a small regular neighbourhood of the union of the boundary tori and $e$. Since both ends of the edge are at the same cusp of $\til{M}$, the two endpoints map to the same end of $T_\zeta$ under $\Psi$, so $e$ also maps to this end. In this example, the inessential edge is entirely contained within the ``outside'' region $R_\text{out}$ associated to the horo-normal surface (the union of the complementary components that contain the cusps). \\

Essential edges can also be entirely contained within $R_\text{out}$ for certain representations. For example, consider a manifold $M$ which is a cover of another manifold $N$. Then a subset $Y$ of the character variety of $M$ corresponds to the character variety of $N$. Suppose also that $M$ has a triangulation $\mathcal{T}$, with an edge $e$ which maps (under the covering map) to an arc that is homotopic into the boundary of $N$. Similarly to as in the previous example, a generic point on $Y$ produces a map $\Psi$ under which the edge $e$ maps into an end of $T_\zeta$, and again $e$ is contained in $R_\text{out}$. The standard deformation variety with the triangulation $\mathcal{T}$ misses such a point (since the shape of a tetrahedron that has $e$ as an edge would be degenerate).\\

For these two examples, and also for the examples in Sections \ref{LLR_ex_part_1} and \ref{8 20}, we get horo-normal surfaces that are not vertex-linking tori for all points belonging to entire components of their respective character varieties. It can also happen that for most of a component we get vertex-linking tori, and only get a horo-normal surface with quadrilaterals at isolated points. \\

The $\pi_1M$-equivariant map $\Psi:\til{M}\to T_\zeta$ is strictly analogous to a developing map  $\til{M} \to\Hthree$. The image $\Psi(t)$ of a tetrahedron $t\in\til{\mathcal{T}}$ is a subtree with 4 (ordered) ends on the boundary of $T_\zeta$, which have a well defined cross ratio in $\C((\zeta))$. These cross ratios are preserved by the action of $M$. Given an edge $e$ of a tetrahedron $t$, the two faces adjacent to $e$ map to ``tripods'' in $T_\zeta$, and the cross ratio associated to $t$, with the appropriate ordering, determines the element of $\text{SL}(2,\C((\zeta)))$ which takes one tripod to the other while preserving the line $\Psi(e)$.\\

The construction in this paper is almost a converse to the construction above. Beginning with a representation $\rho:\pi_1M\to\text{SL}(2,\C)$, we construct something which is analogous to a developing map from $\til{M}$ into $T_\zeta$. However, the analogy is, by design, very weak. We do not assume the existence of any $\text{SL}(2,\C((\zeta)))$ representation that specialises to $\rho$ when $\zeta=0$. 
We have a special vertex $H$ of $T_\zeta$ as above. The representation $\rho$ acts on $\Hthree$. Similarly to as in the above discussion in which we construct $\Psi$ from $P$, we can use $\rho$ to construct a $\pi_1M$-equivariant map $\psi$ from the universal cover $\til{M}$ of $M$ (with vertices of $\til{\mathcal{T}}$ adjoined, corresponding to the cusps of $\til{M}$) to $\Hthree$ (with $\bdry \Hthree$ adjoined). We view this $\Hthree$ and its boundary as corresponding to $H$ and the edges of $T_\zeta$ leaving $H$. The vertices of the tetrahedra of $\til{\mathcal{T}}$ are then positioned at the midpoints of these edges of $T_\zeta$ leaving $H$. If we could push these vertices out to the ends of $T_\zeta$ in an equivariant manner, then we would be able to reconstruct a $\text{SL}(2,\C((\zeta)))$ representation that specialises to $\rho$ when $\zeta=0$. As previously mentioned, we will not do this. In fact we will only push the vertices out one edge further\footnote{Actually the construction is slightly weaker even than this. We only record the relative position of two vertices of $\til{\mathcal{T}}$ that share an edge that gets mapped to a single point of $\bdry \Hthree$ by $\psi$, not their individual absolute positions.}. The point of pushing these vertices outwards is to separate vertices that would otherwise be coincident, giving each degenerate tetrahedron (that would ordinarily have shape parameter zero) a non-zero shape in $\C[[\zeta]]$. Pushing only one step outwards is equivalent to having only ``lowest order'' information about the positions of the vertices, and so we read off only ``lowest order'' information about the shapes of degenerate tetrahedra. This extra data is however enough to support developing through these degenerate tetrahedra.\\

The map $\psi$ tells us which edges of the triangulation map into single points on $\bdry \Hthree$, and this determines a horo-normal surface as the boundary of a regular neighbourhood of the complex generated by these ``zero-length'' edges. The degrees of the shapes of the tetrahedra in $\C[[\zeta]]$ are determined by the intersection of the horo-normal surface with the tetrahedron. There is a way to generate the shapes of the degenerate tetrahedra in an equivariant way.\\

If we start with the shapes of the (degenerate and non-degenerate) tetrahedra, we can develop through paths of tetrahedra into $T_\zeta$. In this construction, the developed positions of the vertices of the tetrahedra are given by elements of $\C((\zeta))$ (which we identify with the ends of $T_\zeta$), but only up to ``lowest order information'' since we only have lowest order information about the tetrahedron shapes. This effectively allows us to reconstruct the extended version of $\psi$, with the vertices pushed out one edge further. If we ignore the higher order information, ``specialising at $\zeta=0$'',  then we recover $\psi$, and from this get the data needed to reconstruct the representation $\rho$. \\

The gluing equations are generalised to a ``consistent developing condition'', which says that the lowest order positions of the vertices of $\til{M}$ are consistently determined, no matter what path of tetrahedra we develop through to get to them. Our degenerate tetrahedra have non-zero shapes of the form $\zeta z_1$ or $\zeta^2 z_2$, the non-degenerate tetrahedra have shapes of the form $z_0\in\C\setminus\{0,1\} \subset \C[[\zeta]]$ and the consistent developing condition is realised as a set of polynomial equations in the coefficients of these shapes. These determine an affine algebraic variety over $\C$, and this is the generalisation of the deformation variety, the ``extended deformation variety''. \\

In nice cases, we can derive this system of polynomial equations in a natural way. Consider an irreducible, non-dihedral representation $\rho: \pi_1M \to \PSL$,
and suppose we are trying to construct a curve in the representation variety which passes through $\rho$. Note that this is possible, even if the character of $\rho$ does not lie on a 1-dimensional component of the character variety. One might attempt this by trying to solve the gluing equations so that the tetrahedra shapes are in $\C[[\zeta]]$, carrying out the developing map construction and from there get a representation. Reversing the construction of the tautological representation, one would obtain a curve of representations. One approach to finding such a power series solution goes back to Newton and Puiseaux, and is now known as ``tropical algebraic geometry''. We replace each variable in each of the gluing equations with a formal power series in $\zeta$ and try to solve for the terms in the series recursively. For each gluing equation, setting the coefficient of the lowest degree term equal to zero produces a polynomial equation in the coefficients of the lowest degree terms of the formal series. If one can find a solution to these equations having all coordinates non-zero then then recursion can be continued. A necessary condition for being able to do this is that each equation should have at least two monomials which contribute to the lowest degree term. This is only possible if one chooses the orders of the formal power series correctly. The orders must satisfy a certain system of linear equations and inequalities. (A tropical algebraic geometer would say that these linear equations and inequalities define the tropical prevariety associated to the gluing equations, and the vector of orders must lie in this prevariety.) However, in the case of the gluing equations, at the first step of the recursion this system of linear equations and inequalities are precisely the Q-matching equations for spun-normal surfaces relative to the triangulation (with the inequalities simply saying that the quadrilateral weights are non-negative). So the starting point for using this recursive process to find a power series solution to the gluing equations is a normal surface. In our case, this normal surface is the porous horo-normal surface.\\

For a single polynomial equation, this recursive procedure will always continue to work, and will lead to a power series solution.
This is essentially the Newton-Puiseux algorithm for resolving singularities of plane algebraic curves. But it is a standard issue in tropical algebraic geometry that this fails for systems with more than one equation. However, the system of polynomial equations produced at the first step of the recursion, which is all that the normal surface produces, is already useful. These are the equations that define the extended deformation variety in this nice case. A solution to these equations need not lead to a power series solution and hence need not lead to a representation in $\text{PSL}(2,\C((\zeta)))$. However, every solution does determine the very weak analogue of a developing map and has a ``specialisation at $\zeta=0$'' which is a representation in $\PSL$. Moreover, every irreducible non-dihedral representation is realised as such a specialisation. This is the content of Theorem \ref{xdv_for_all_rho_if_all_horo_omni}. 

\section{The standard deformation variety}\label{The deformation variety}
Let $M$ be a topologically finite 3-manifold which is the interior of a compact 3-manifold with non-empty boundary consisting of a disjoint union of tori. An \textbf{ideal triangulation} $\mathcal{T}$ of $M$ consists of a pairwise disjoint union of standard Euclidean 3--simplices, $\widetilde{\Delta} = \cup_{k=1}^{n} \widetilde{\Delta}_k,$ together with a collection $\mathcal{I}$ of Euclidean isometries between the 2--simplices in $\widetilde{\Delta},$ called {\bf face pairings}, such that 
$(\widetilde{\Delta} \setminus \widetilde{\Delta}^{(0)} )/ \mathcal{I}$ is homeomorphic to $M.$
The simplices in $M$ may be singular. It is well-known that every non-compact, topologically finite 3--manifold admits an ideal triangulation. 

\begin{defn}\label{defm_var}
Let $M$ be a 3-manifold with non-empty boundary consisting of a disjoint union of tori, and with ideal triangulation $\mathcal{T}$ consisting of $N$ tetrahedra. The \textbf{standard deformation variety} of $M$ with respect to the triangulation $\mathcal{T}$, $\mathfrak{D}(M) = \mathfrak{D}(M;\mathcal{T})$ is the affine variety in $(\mathbb{C}\setminus\{0,1\})^{3N}$, defined as the solutions of gluing equations and identities between complex dihedral angles within each tetrahedron, where each of the three complex dihedral angles in each tetrahedron corresponds to a dimension of the ambient space. Specifically, for the 6 dihedral angles within each tetrahedron, angles on opposite edges are the same, as shown in Figure \ref{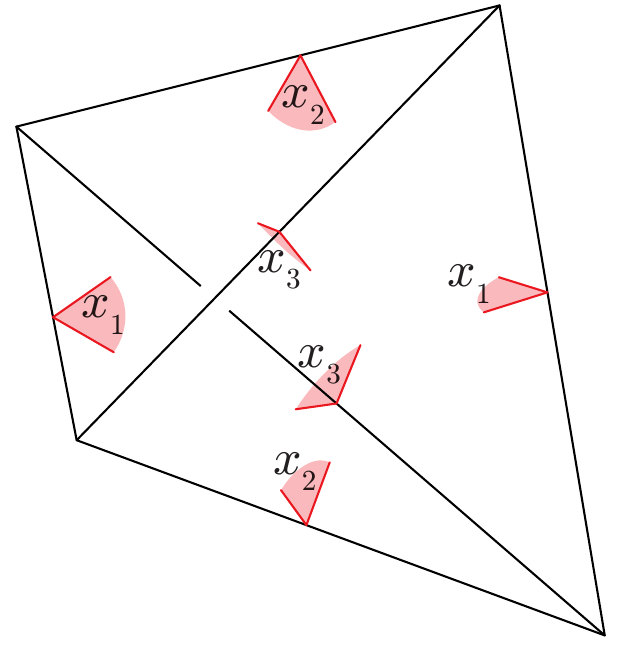} (hence the fact that there are three complex variables for each of the $N$ tetrahedra), and $x_1,x_2,x_3$ are related to each other by:
\begin{equation}\label{dv_eq1}
x_1x_2x_3=-1
\end{equation}
\begin{equation}\label{dv_eq2}
x_1x_2 - x_1 + 1 = 0
\end{equation}
For each edge of $\mathcal{T}$, we also require that the product of the complex dihedral angles arranged around an edge of the triangulation equals 1 (these are the {\bf gluing equations}).
\end{defn}
This definition is first seen in Thurston's notes \cite{thurston}, chapter 4.\\ 

\begin{figure}[htb]
\centering
\includegraphics[width=0.3\textwidth]{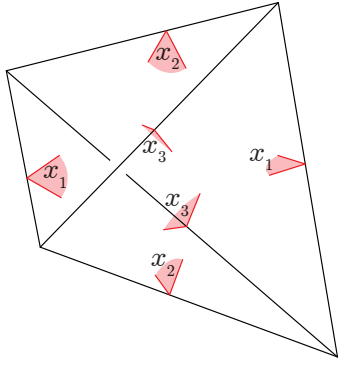}
\caption{The 6 dihedral angles in a tetrahedron.}
\label{complex_dihedral_angles.pdf}
\end{figure}

Let $\til{M}$ be the universal cover of $M$ with induced triangulation $\til{\mathcal{T}}$. Let $\til{\mathcal{V}}$ be the set of vertices of $\til{\mathcal{T}}$\footnote{$\til{\mathcal{V}}$ is also the set of cusps of $\til{M}$, and so is independent of $\mathcal{T}$.}. Given a point $Z \in \mathfrak{D}(M;\mathcal{T})$ the {\bf developing map} $\Phi_Z:\til{\mathcal{V}}\rightarrow \bdry \Hthree$ is defined up to conjugation as follows\footnote{Note that the map is usually defined as from $\til{M}$ to $\Hthree$ (as in the introduction), but our formulation encodes equivalent data and is more natural in the context of this paper.} (also see Yoshida \cite{yoshida91} and Tillmann \cite{tillmann_degenerations}).  If $\tri$ is a triangle in $\til{\mathcal{T}}$, we arbitrarily choose distinct images for the three vertices of $\tri$ in $\bdry \Hthree$. If  $t$ is one of the two tetrahedra incident to $\tri$ in $\til{\mathcal{T}}$, then $t$ has an associated ideal hyperbolic shape, given by $Z$. This shape, together with the known positions of three of its vertices on $\bdry\Hthree$ determine the position of the fourth, and so the image of that vertex under $\Phi_Z$. We repeat this procedure, spreading out through the tetrahedra of $\til{\mathcal{T}}$ to determine the whole map. The gluing equations ensure that we get the same answer, no matter which path through the tetrahedra we take to reach a given vertex. The arbitrary choice in the positions of the initial three vertices corresponds to the definition being up to conjugacy.\\
\begin{defn}
Let $\mathfrak{R}(M)$ be the set of representations $\rho:\pi_1M\rightarrow \PSL \cong \text{Isom}(\Hthree)$. The developing map construction gives a map 
$$\mathfrak{R}_\mathcal{T}: \mathfrak{D}(M;\mathcal{T}) \rightarrow \mathfrak{R}(M)$$
 (again up to conjugation). For a given $Z\in  \mathfrak{D}(M;\mathcal{T})$, we construct the developing map $\Phi_Z$, and then read off a corresponding representation (the holonomy representation) $\rho_Z$ as follows. The three vertices $v_1,v_2,v_3$ of $\tri$ have distinct positions $\Phi_Z(v_i)\in\bdry\Hthree$. An element $\gamma\in\pi_1M$ acts on the vertices as a deck transformation, and the images $\gamma v_i$ have distinct positions $\Phi_Z(\gamma v_i)\in\bdry\Hthree$. A triple of distinct points of $\bdry\Hthree$ maps to another triple of distinct points under a unique element of $\PSL$, and this is the value we take for $\rho_Z(\gamma)$.\end{defn}

\subsection{Dependence of the deformation variety on the triangulation}\label{sec_dep_on_tri}
\begin{thm}[Matveev, Theorem 1.2.5 of \cite{matveev}] \label{thm_matveev}
If $\mathcal{T}$ and $\mathcal{T}'$ are two ideal triangulations of a given manifold $M$, each of which has at least two tetrahedra, then $\mathcal{T}$ and $\mathcal{T}'$ are connected by a sequence of 2-3 and 3-2 moves (see Figure \ref{2-3moves}).
\end{thm}
\begin{figure}[htb]
\centering
\includegraphics[width=0.5\textwidth]{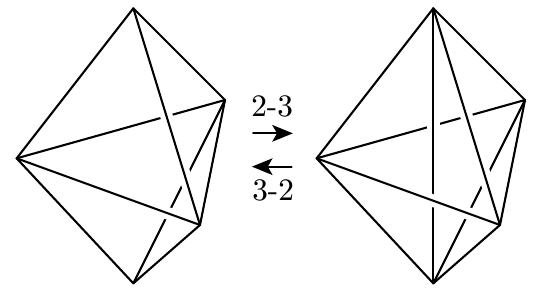}
\caption{2-3 and 3-2 moves.}
\label{2-3moves}
\end{figure}
We might hope that the deformation varieties for triangulations $\mathcal{T}_2$ and $\mathcal{T}_3$ of a manifold that differ by a 2-3 move might be equivalent in some sense, and then given the above theorem and induction we would get equivalence for any triangulation (with a possible exception for the manifolds that have a one-tetrahedron triangulation). We would expect to be able to convert between points of $\mathfrak{D}(M;\mathcal{T}_2)$ and $\mathfrak{D}(M;\mathcal{T}_3)$ as follows:\\

\begin{figure}[htb]
\centering
\includegraphics[width=0.7\textwidth]{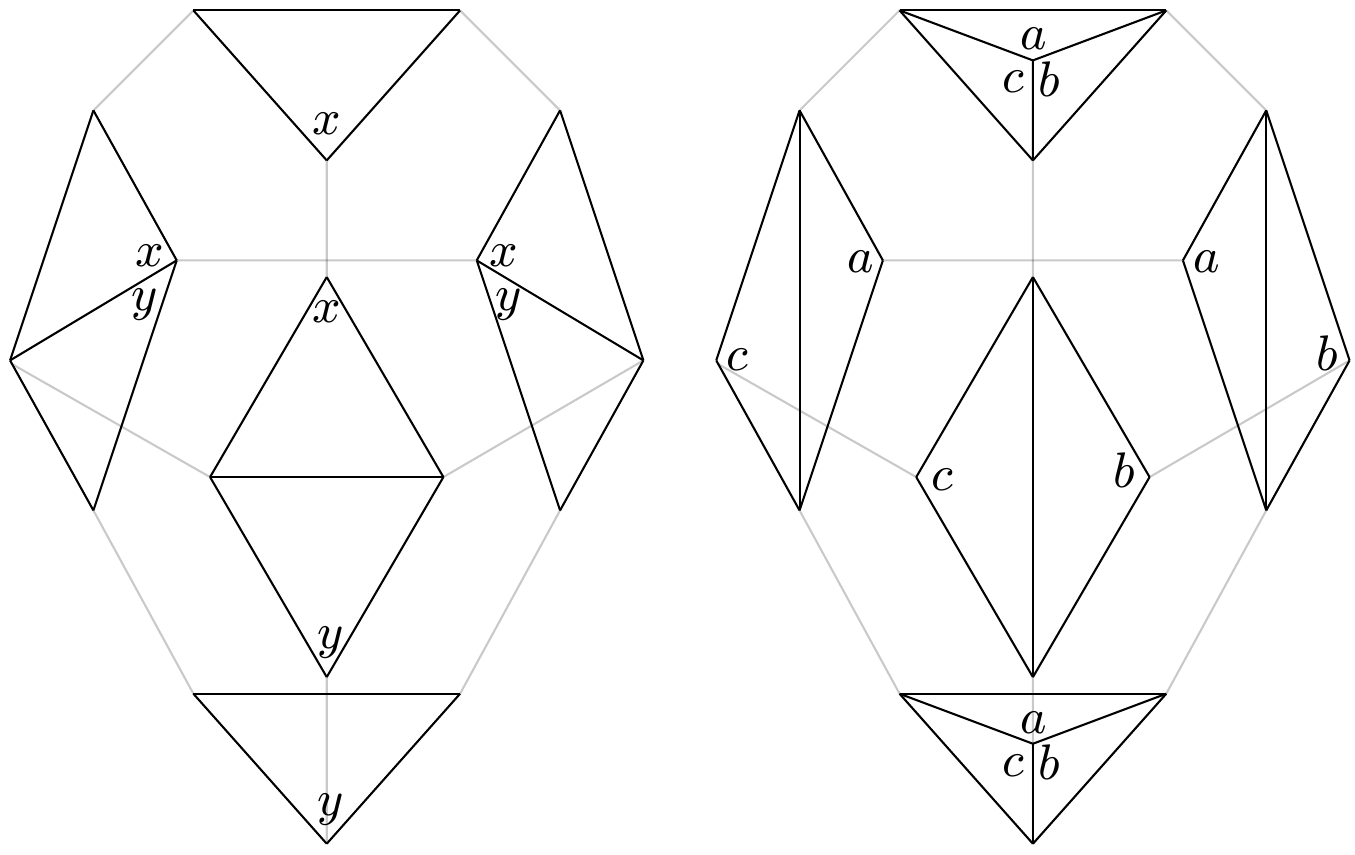}
\caption{The 2-3 and 3-2 moves with truncated ends, showing the dihedral angles.}
\label{2-3_truncated}
\end{figure}

Figure \ref{2-3_truncated} shows two tetrahedra labelled with complex dihedral angles $x$ and $y$ which share a face, and three tetrahedra labelled by $a, b$ and $c$ which all share an edge. We label the angles within one tetrahedron $x_1 = x, x_2 = \frac{x-1}{x}, x_3 = \frac{1}{1-x}$, moving clockwise from $x$ on each truncated triangular end and similarly for the other tetrahedra. The extra edge for the three tetrahedra gives us the equation $a_1b_1c_1=1$. If we are to have corresponding points of the two deformation varieties, then the dihedral angles outside of the six-sided shape in which the 2-3 move is performed must be the same. Because of the gluing equations, the dihedral angles inside must also be the same and we have the following relations:
\[
\begin{array}{ccc}a_1 = x_1 y_1 & x_1 = b_3 c_2 & y_1 = b_2 c_3 \\b_1 = x_2 y_3 & x_2 = c_3 a_2 & y_2 = a_2 b_3 \\c_1 = x_3 y_2 & x_3 = a_3 b_2 & y_3 = c_2 a_3\end{array}\]
In good situations, these equations do allow us to produce a birational map between two deformation varieties of a manifold with triangulations that differ by a 2-3 move. However, this does not always work, in particular if we happen to have the relation in $\mathfrak{D}(M;\mathcal{T}_2)$ that $xy=1$, as we will see in the following example.

\section{Examples, part 1: The once punctured torus bundle with monodromy $LLR$}\label{LLR_ex_part_1}
We will consider two examples of this phenomenon. The first is somewhat artificial but demonstrates the phenomenon well, and we will return to it in Section \ref{Example, part 2} to show how the extended deformation variety solves the problem. In Section \ref{8 20} we will see a more natural occurrence of the issue, in the calculation of the $\PSL$ A-polynomial for the knot $8_{20}$.\\

In this section we give an example of two triangulations of a manifold such that the deformation variety for one triangulation contains an entire component that does not appear in the deformation variety for the other triangulation.
We consider the punctured torus bundle $M_{LLR}$ with monodromy given by $LLR$ (see for example, Gu\'eritaud \cite{gueritaud_torus_bundles} for the notation), and show two triangulations of this punctured torus bundle, $\mathcal{T}_4$ and $\mathcal{T}_5$ (with 4 and 5 tetrahedra respectively, neither is canonical) in Figure \ref{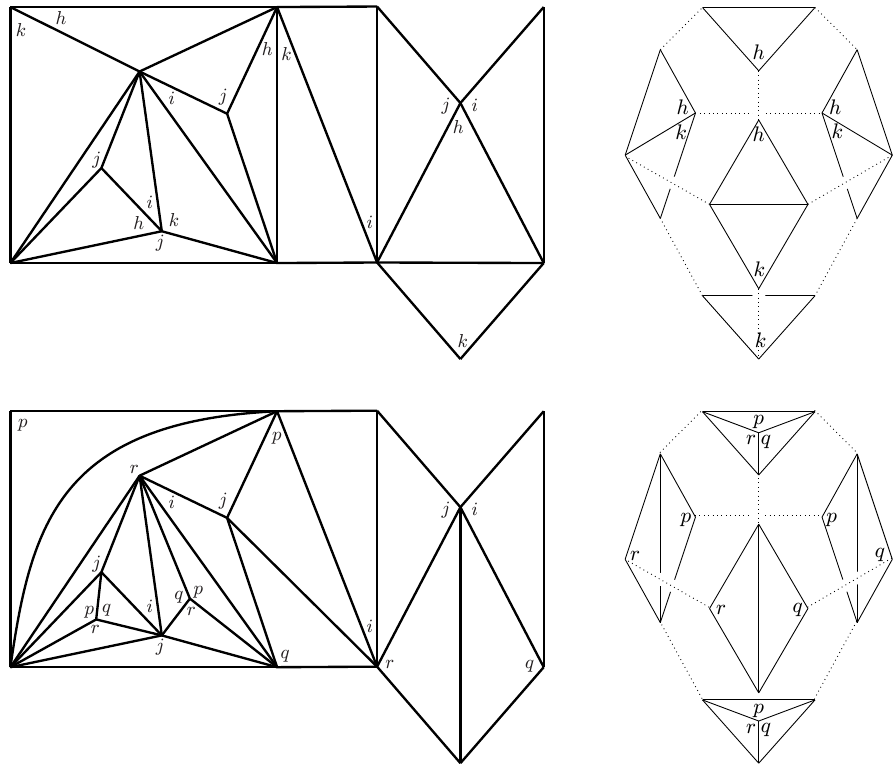}. Vertices of the triangulation of the torus boundary correspond to edges of the triangulation, and we can read off the gluing equation from the corners of triangles incident to vertices. We see both ends of each edge so each equation appears twice. \\

\begin{figure}[htb]
\centering
\includegraphics[width=1.0\textwidth]{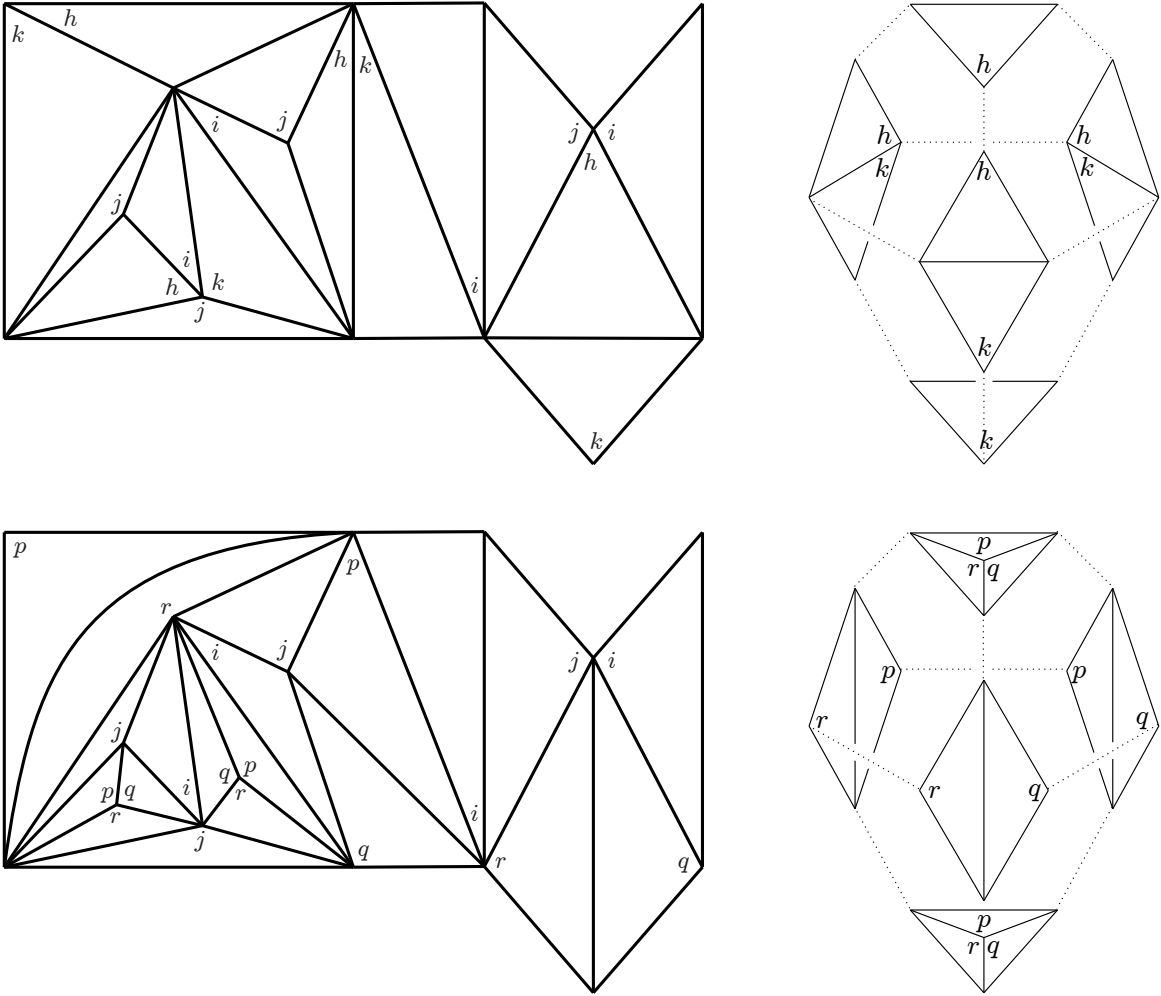}
\caption{Induced boundary torus triangulations of two triangulations, $\mathcal{T}_4$ and $\mathcal{T}_5$, of the punctured torus bundle $M_{LLR}$. The edges of each fundamental domain are identified in the obvious way. In the top left is a fundamental domain for the triangulation on the torus boundary induced by $\mathcal{T}_4$. There are four tetrahedra in this triangulation, with angles labelled $h,i,j,k$. Each tetrahedron has four ends and we see these four truncated ends as triangles on the torus boundary. To the right is shown the tetrahedra involved in the 2-3 move, and in the bottom left the resulting fundamental domain for $\mathcal{T}_5$, with angles labelled $i,j,p,q,r$. }
\label{LLR_4_vs_5_tetra.pdf}
\end{figure}

For $\mathfrak{D}(M_{LLR};\mathcal{T}_4)$ we obtain the following gluing equations:
\begin{eqnarray}
\label{hijk_first} hijk & = & 1\\
\frac{i-1}{i} j \frac{1}{1-h} & = & 1\\
i \frac{1}{1-i} \frac{j-1}{j} \frac{1}{1-j} \frac{1}{1-h} \left(\frac{k-1}{k}\right)^2 & = & 1\\
\label{hijk_last} \frac{i-1}{i} \frac{1}{1-i}\frac{j-1}{j} \frac{1}{1-j}h\left(\frac{h-1}{h}\right)^2k\left(\frac{1}{1-k}\right)^2 &=&1
\end{eqnarray}
One gluing equation always depends on the others, so we discard the last of these and the others simplify to:
\begin{eqnarray}
hijk & = & 1\\
\frac{i-1}{i} j \frac{1}{1-h} & = & 1\\
 \frac{i}{1-i} \frac{-1}{j} \frac{1}{1-h} \left(\frac{k-1}{k}\right)^2 & = & 1
\end{eqnarray}

The variety consists of two 1-dimensional components, one of which contains the complete structure, and the other of which satisfies the extra condition that $hk=1$. Then these equations become:
\begin{eqnarray}
ij &=& 1\\
\frac{i-1}{i} j \frac{1}{1-h} & = & 1\\
\frac{i}{1-i} \frac{-1}{j} (1-h) & = & 1
\end{eqnarray}
The latter two equations are redundant and we get a 1-dimensional variety. For $\mathfrak{D}(M_{LLR};\mathcal{T}_5)$ we obtain:

\begin{eqnarray}
\label{ijpqr_first} pqr &=& 1 \\
\label{ijpqr_second} ij\frac{q-1}{q}\frac{1}{1-q}\frac{r-1}{r}\frac{1}{1-r}&=& 1 \\
\frac{i-1}{i}j\frac{1}{1-p}\frac{q-1}{q}&=& 1 \\
i \frac{1}{1-i} \frac{j-1}{j} \frac{1}{1-j}\frac{p-1}{p}\frac{1}{1-q}r&=& 1 \\
\label{ijpqr_last} \frac{i-1}{i} \frac{1}{1-i}\frac{j-1}{j}\frac{1}{1-j} p \frac{p-1}{p}\frac{1}{1-p}q\frac{r-1}{r}\frac{1}{1-r}&=&1
\end{eqnarray}
Equations (\ref{ijpqr_second}) through (\ref{ijpqr_last}) correspond to (\ref{hijk_first}) through (\ref{hijk_last}), and we get the extra equation (\ref{ijpqr_first}). Again discarding the last equation and simplifying we get:
\begin{eqnarray}
\label{trouble1} pqr &=& 1 \\
\label{trouble2} ij\frac{1}{q}\frac{1}{r}&=& 1 \\
\frac{i-1}{i}j\frac{1}{1-p}\frac{q-1}{q}&=& 1 \\
\frac{i}{1-i} \frac{-1}{j}\frac{p-1}{p}\frac{1}{1-q}r&=& 1 
\end{eqnarray}

This time, if $ij=1$ then equations (\ref{trouble1}) and (\ref{trouble2}) imply that $p=1$, and there is no solution. 

\begin{rmk}
More generally, if a particular component of the representation variety has an extra relation on it (in addition to those defining the variety) which implies that certain cusps always appear in the same place under $\Psi_\rho$ (see Definition \ref{psi_rho}) as we vary $\rho$ within the component, then a triangulation with an edge between two such cusps will have an associated deformation variety that will not see this component. 

\end{rmk}

\section{Extending the deformation variety: Overview}

In this paper we define and then prove properties of a generalisation of the deformation variety, which solves these problems of degenerate tetrahedra and dependence on the triangulation whilst retaining many useful properties of the deformation variety, in particular, a map to the representation variety. \\

The first step, in Section \ref{tree}, is to move to a more general setting, replacing dihedral angles (i.e. cross ratios) in $\C \setminus \{0,1\}$ with dihedral angles in $\C((\zeta)) \setminus \{0,1\}$, formal Laurent series in a variable $\zeta$.  This allows tetrahedra to be degenerate whilst retaining enough information to develop through them, and so define a developing map. Ohtsuki~\cite{ohtsuki97} uses a similar idea to describe the shape of a tree on which the fundamental group of a manifold acts in the Culler-Shalen construction associated to an ideal point. The elements of the field $\C((\zeta)) \setminus \{0,1\}$ can be viewed as the ends of a tree, the Bruhats-Tits building for $\text{GL}(2,\C((\zeta)))$. Also see Serre \cite{serre_trees}.\\

In this setting the ends of the tree (which are Laurent series) correspond to cusps in the universal cover of the manifold, and so the cross ratio associated to an ideal tetrahedron should be a Laurent series as well. In the case of an ideal point we expect tetrahedra to be degenerate, which corresponds to cross ratios with first non-zero term $\zeta^k z_k$ for $k>0$ and $z_k \in \C$. In general, we would expect the Laurent series to continue with infinitely many further terms for higher powers of $\zeta$.\\

In our case we may also have degenerate tetrahedra, but they do not necessarily come from an ideal point. In Section \ref{LLR_ex_part_1} for example,  there is generally no way to approach the degenerate shapes from non-degenerate shapes. We also want our extension of the deformation variety to be a finite dimensional complex variety, and so we do not want to deal with cross ratios being Laurent series with infinitely many complex coefficients. Instead, the idea is to consider only the first non-zero term (``lowest order term'') of a Laurent series (in fact we will only use cross ratios of the form $z$, $\zeta z$ and $\zeta^2 z$ to describe degenerate tetrahedra). It turns out that in good situations, the lowest order terms of the cross ratios are enough to determine the developed positions of vertices of tetrahedra, and therefore the developing map, up to lowest order. Making this idea precise is the main purpose of Section \ref{tree}.\\ 

In Section \ref{The extended deformation variety} we consider the subset of edges of a given triangulation that are supposed to be of ``zero length'', meaning that the vertices at each end are meant to be in the same place under the developing map. These must satisfy a certain condition (``no bad loops''), and given this condition we construct a normal surface (which we call a ``horo-normal surface'') surrounding the zero length edges. In fact we can see the surface as the boundary of a neighbourhood of the complex generated by the zero length edges. The horo-normal surface and the set of zero length edges determine each other, and so record the same data. The surface cuts the manifold $M$ into inside and outside regions, and in the case that the lift of the inside region is connected in $\til{M}$ we say that the surface is ``porous''.\\

 We define the extended deformation variety $\widehat{\mathfrak{D}}(M;\mathcal{T};S)$ of a manifold $M$ with a given triangulation $\mathcal{T}$ and porous horo-normal surface S, by specifying the lowest order terms of cross ratios for tetrahedra that intersect the inside region. The results of Section \ref{tree} then imply that we can develop through paths of triangles in the inside region using only the lowest order terms of the cross ratios. The surface needs to be porous for us to be able to develop through paths of triangles in the inside region to reach every vertex. In place of the gluing equations of the deformation variety, we have a more general condition that states that if we develop along paths of triangles to a vertex, the position should not depend on the path. We show in Theorem \ref{is_variety} that $\widehat{\mathfrak{D}}(M;\mathcal{T};S)$ is indeed an affine algebraic variety.\\
 
In Section \ref{porous horo-normal surfaces} we deal with the issue of requiring the surface to be porous. In Section \ref{small one cusp manifolds} we show that if the manifold is small and has a single torus boundary component, then for any triangulation, every horo-normal surface is porous. If however the manifold is not small or has multiple boundary components, we may need to retriangulate. In Section \ref{Retriangulating} we give an algorithm that modify a given triangulation $\mathcal{T}$ by inserting ``pillows'', pairs of tetrahedra that share two faces along a single edge. Algorithm \ref{fix_all_horo-normal} modifies a triangulation so that any horo-normal surface relative to the new triangulation is porous.\\

If we have a developing map from the universal cover of $M$ to $\Hthree$ then we get a representation into $\PSL$, and so we get a homomorphism from $\widehat{\mathfrak{D}}(M;\mathcal{T};S)$ to the representation variety $\mathfrak{R}(M)$, defined up to conjugation. We make this precise and prove the main result, Theorem \ref{xdv_for_all_rho_if_all_horo_omni},  in Section \ref{Representations}.

\section{The tree associated to $\C((\zeta))$}\label{tree}
The following definition is the same as Definition 1.3 in Ohtsuki \cite{ohtsuki97}. Ohtsuki refers to it as ``Serre's tree'', although Serre \cite{serre_trees} attributes it to Bruhat and Tits. It is a special case of the Bruhat-Tits building \cite{bruhat-tits_building}:

\begin{defn}
For an indeterminate variable $\zeta$ we define the tree $T_\zeta$ with the following four conditions:
\begin{enumerate}
\item $T_\zeta$ has a special vertex $H$; we call it the {\bf home vertex}.
\item We call an infinitely long path from the home vertex an {\bf end}. The set of ends of $T_\zeta$ is identified with the set $\C((\zeta)) \cup \{ \infty \}$, to which we give the discrete topology.
\item The part of $T_\zeta$ from $H$ to the set of ends $\C[[\zeta]]$ (power series in $\zeta$) is identified with the series
$$ \{H\} \leftarrow \C[\zeta]/(\zeta) \leftarrow \C[\zeta]/(\zeta^2) \leftarrow \C[\zeta]/(\zeta^3) \leftarrow \cdots \leftarrow \C[[\zeta]]$$ where the maps are the natural projections. Here we identify the set of vertices at distance $n$ from $H$ with the set $\C[\zeta]/(\zeta^n)$, the ring of polynomials of degree at most $n-1$. Two vertices are connected by an edge if one of the maps takes one vertex to the other.
\item The part of $T_\zeta$ from $H$ to the set of ends $\C((\zeta)) \cup \{ \infty \} \setminus \C[[\zeta]]$ is homeomorphic to $(\zeta) \subset \C[[\zeta]]$ by the map $\C((\zeta)) \cup \{ \infty \} \rightarrow \C((\zeta)) \cup \{ \infty \}$, which takes $x$ to $x^{-1}$.
\end{enumerate}
\end{defn}

See Figure \ref{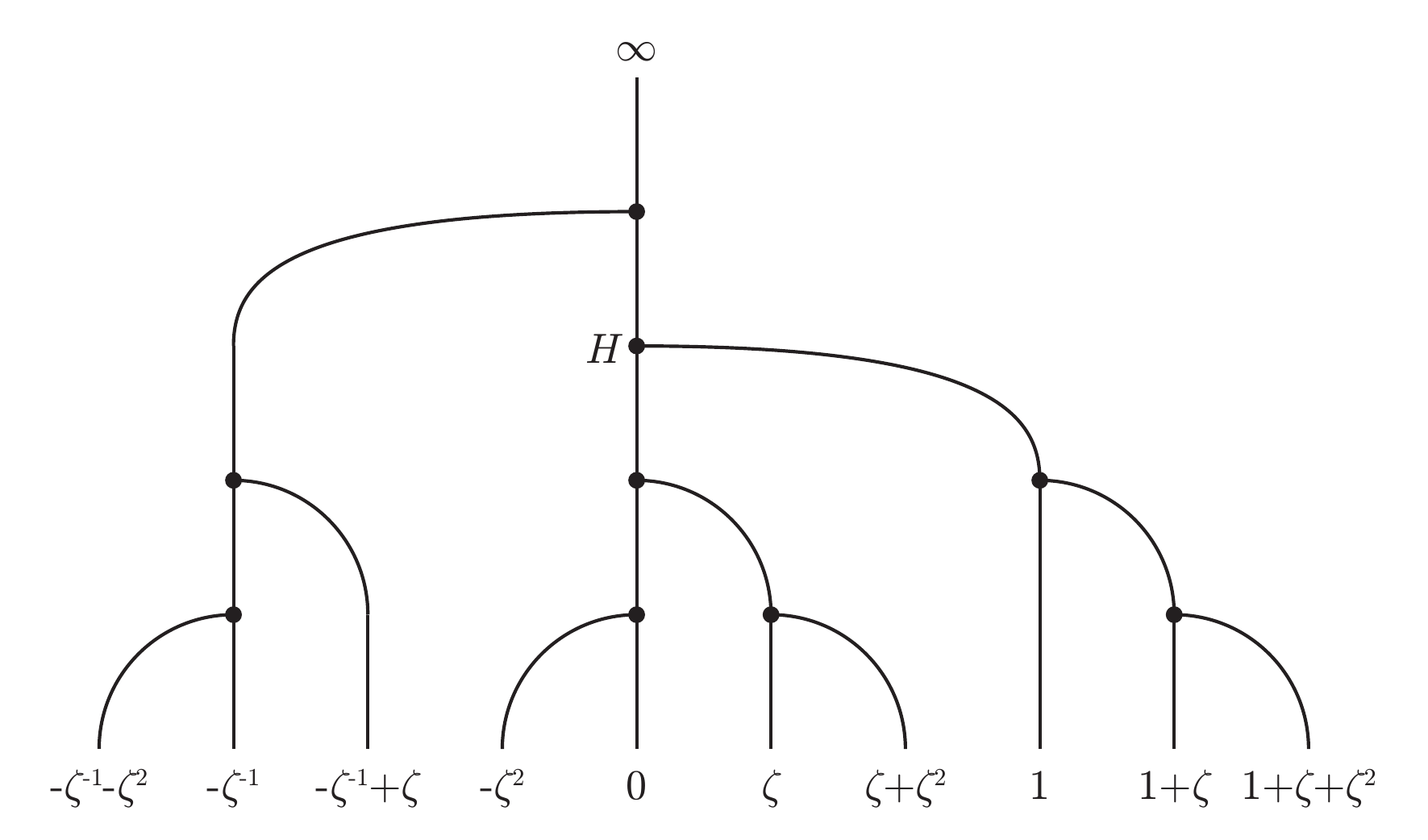} for a very small part of the tree, showing how paths from the ends of the tree to $H$ meet each other.

\begin{figure}[htb]
\centering
\includegraphics[width=1.0\textwidth]{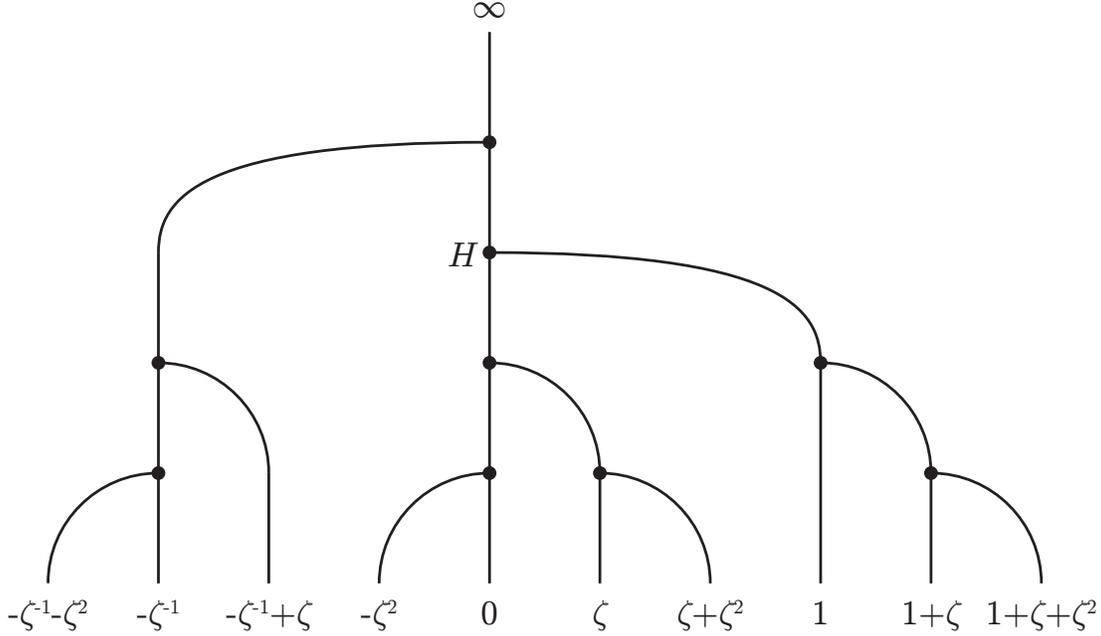}
\caption{A very small subset of $T_\zeta$, with the home vertex $H$ marked. In fact each vertex has a $\CP^1$'s worth of neighbours.}
\label{Serretree.pdf}
\end{figure}

\begin{defn}
For $x \in \C((\zeta)) \setminus \{0\}$ with coefficients $x_k$, we define 
$$\ord(x) := \min\{k \in \Z | x_k \neq 0\}$$
$$x_* := x_{\ord(x)} \in \C\setminus\{0\}$$
We set $\ord(0) := \infty$ and $\ord(\infty) := -\infty$.
\end{defn}
If $n = \ord(x)$ then $x = x_n\zeta^n+x_{n+1}\zeta^{n+1} + \cdots$\\

Given four distinct ends $a,b,c$ and $d$ of $T_\zeta$ we define the cross ratio 
\begin{equation} \label{cross_ratio_eq}
z = \frac{(a-c)(b-d)}{(a-d)(b-c)} \in \C((\zeta))\setminus \{0,1\}
\end{equation}
Just as in $\C\cup\{\infty\}$, if $a=\infty$, for example, we interpret this as $z = \frac{(b-d)}{(b-c)}$, using the rules $\infty + x = \infty$ for $x \in \C((\zeta))$ and $\infty/\infty = 1$.
There are six possible cross ratios to take, but only three if we preserve orientation. They are related to each other as $z, \frac{z-1}{z}, \frac{1}{1-z}$. At least one of these three is a {\bf preferred cross ratio}, meaning that $z \in \C[[\zeta]]$ and $z_0 \neq 1$. If $z \in \C((\zeta))\setminus\C[[\zeta]]$ then $\frac{1}{1-z}\in \C[[\zeta]]$ and $(\frac{1}{1-z})_0 = 0$, and if $z_0 = 1$ then $(\frac{z-1}{z})_0 = 0$. If $z\in\C[[\zeta]], z_0\notin\{0,1\}$ then all of the three cross ratios are preferred.  Note that the order of the preferred cross ratio determines the length of the {\bf spine} determined by the four ends, as in Figure \ref{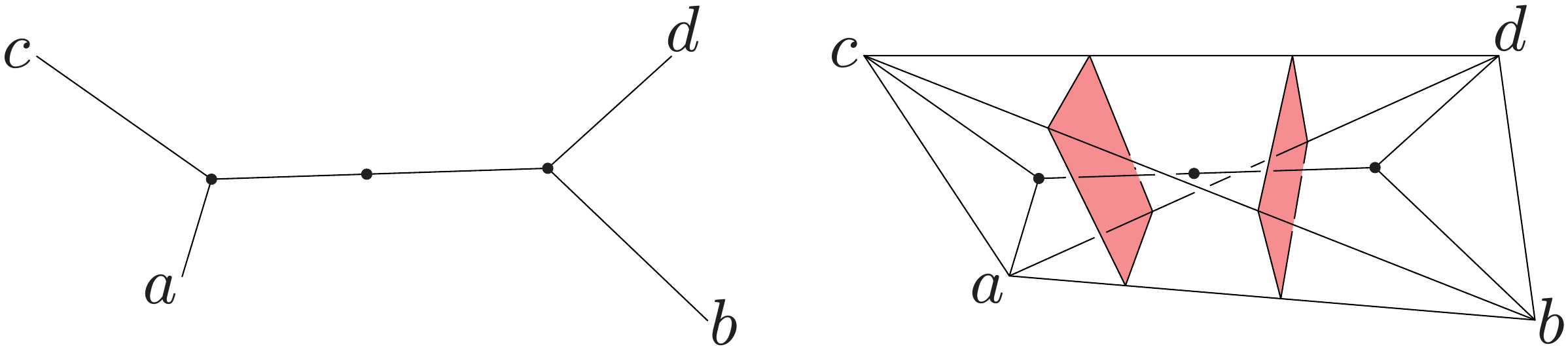}.\\

\begin{figure}[htb]
\centering
\includegraphics[width=\textwidth]{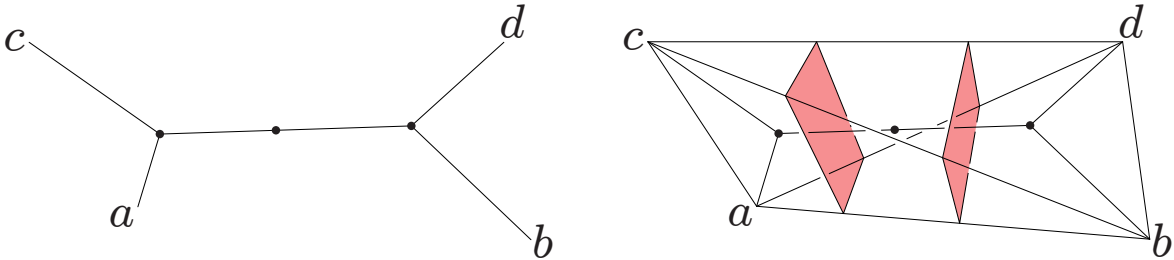}
\caption{On the left, a spine in $T_\zeta$ determined by four ends $a,b,c,d$. The order of the preferred cross ratio is the same as the length of the spine (given by the number of edges in the midsection), which is 2 in this example. On the right, a spine isometric to the first viewed as part of the dual tree to a normal surface within a tetrahedron, this time $a,b,c,d$ are vertices of the tetrahedron.}
\label{spine_and_tetra.pdf}
\end{figure}

Given three distinct ends $a,b,c \in \C((\zeta))\cup \{\infty\}$ and cross ratio $z$, solving for the fourth gives 
\begin{equation}\label{develop_eq}
d = \frac{(b-c)za-(a-c)b}{(b-c)z - (a-c)}.
\end{equation}
We deal with $\infty$ in this equation in the same way we did for the cross ratio.\\

For each pair of triangles which share an edge, we may assign a {\bf dihedral angle} between them, which is a cross ratio in $\C((\zeta))\setminus \{0,1\}$. Given specified ends $e, e', e'' \in\C((\zeta))\cup \{\infty\}$ for the locations of the vertices of one triangle, this tells us the location of the fourth vertex using equation (\ref{develop_eq}). See Figure \ref{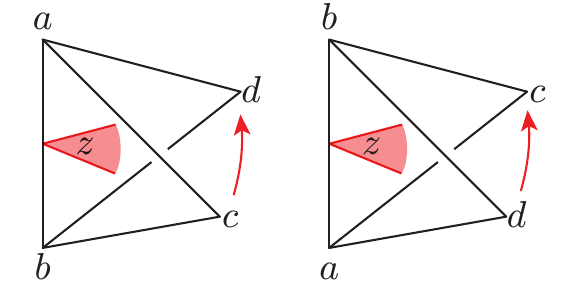}. We call this process of determining the position of a vertex from a dihedral angle and the positions of three vertices {\bf developing}. The cross ratio assigned to the pair of triangles that share an edge is the one (of the three possibilities) so that the labelling on the left hand diagram of Figure \ref{develop_dihedral_angle.pdf} matches equation (\ref{cross_ratio_eq})\footnote{The reason we call this choice of cross ratio the dihedral angle is that if we then set $a=\infty, b=0, c = 1$ and put the picture in the upper half space model of $\Hthree$, then with appropriate interpretation of $\infty$ in equation (\ref{cross_ratio_eq}) we get that $z=d$, and so the usual meaning of angle comparing $c-b = 1 - 0$ with $d-b = d-0$ is the argument of $z$.}. The right hand diagram of Figure \ref{develop_dihedral_angle.pdf} and the independence of the cross ratio under swapping $a$ with $b$ and $c$ with $d$ shows us that it doesn't matter which way up the picture is when we choose which cross ratio is the dihedral angle.\\

\begin{defn}\label{chain_of_triangles}
A {\bf chain of triangles} in $\mathcal{T}$ (or $\til{\mathcal{T}}$) is a sequence of triangular faces $\tri^{(1)}, \tri^{(2)},\ldots,\tri^{(n)} \in \mathcal{T}$ (or $\til{\mathcal{T}}$) such that neighbouring triangles are two faces of a tetrahedron of $\mathcal{T}$ (or $\til{\mathcal{T}}$) (and therefore share an edge).
\end{defn}

By induction we can develop positions of vertices for chains of triangles with dihedral angles between neighbours.\\

\begin{figure}[htb]
\centering
\includegraphics[width=0.6\textwidth]{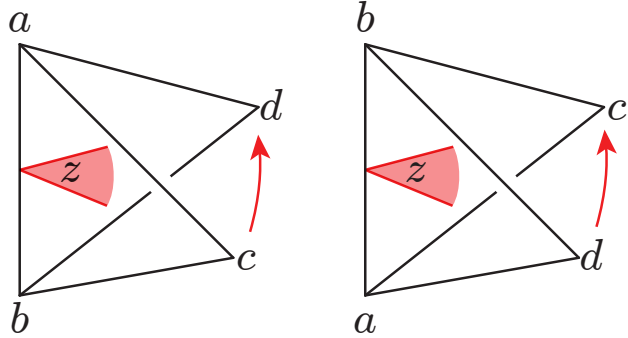}
\caption{Developing across a dihedral angle between two neighbouring triangles. }
\label{develop_dihedral_angle.pdf}
\end{figure}

\begin{rmk}
For any field $\mathbb{F}$, $\text{PSL}_2(\mathbb{F})$ acts freely and transitively on $\mathbb{PF}^1$ and preserves cross ratios, and so this is true for the field $\C((\zeta))$. Therefore choosing different initial points $e, e', e''$ for the first triangle moves the set of developed vertex positions consistently. In particular, for any construction in this paper there will be only countably many developed vertex positions but a $\mathbb{CP}^1$'s worth of vertices next to $H$ so we may choose $e, e', e''$ so that \emph{all} vertex positions are in $\C[[\zeta]]$, avoiding the one edge leading away from $H$ in the direction of $\infty$. 
\end{rmk}

\begin{lemma}\label{*det_cr}
If $z$ is the preferred cross ratio of $a,b,c,d \in \C[[\zeta]]$  then $$z_*=\frac{(a-c)_*(b-d)_*}{(a-d)_*(b-c)_*}$$
\end{lemma}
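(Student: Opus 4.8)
The plan is to reduce the statement to the multiplicativity of the order function and of the leading-coefficient map $x \mapsto x_*$ on the field $\C((\zeta))$. Recall that $\ord$ is a discrete valuation: for nonzero $x,y \in \C((\zeta))$ one has $\ord(xy)=\ord(x)+\ord(y)$ and $\ord(x^{-1})=-\ord(x)$, and comparing leading terms (using that $\C$ has no zero divisors) shows that $x\mapsto x_*$ is then a group homomorphism from $\C((\zeta))^\times$ to $\C^\times$, i.e.\ $(xy)_*=x_*y_*$ and $(x^{-1})_*=(x_*)^{-1}$. These are essentially the only facts about Laurent series I will use.

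First I would check that all the quantities appearing are well defined. Since $a,b,c,d$ are distinct ends of $T_\zeta$ lying in $\C[[\zeta]]$, the four differences $a-c$, $b-d$, $a-d$, $b-c$ are nonzero elements of $\C[[\zeta]]$; hence each has a finite, non-negative order and a nonzero leading coefficient, so the right-hand side of the claimed identity is a well-defined element of $\C^\times$. Moreover none of $a,b,c,d$ is $\infty$, so there is no special convention to invoke in (\ref{cross_ratio_eq}): $z$ is literally the ratio $\frac{(a-c)(b-d)}{(a-d)(b-c)}$ of nonzero elements of $\C((\zeta))$, so $z\neq 0$ and $z_*$ is defined. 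The hypothesis that $z$ is the \emph{preferred} cross ratio is not actually needed for the computation; it only guarantees $z\in\C[[\zeta]]$, i.e.\ that $\ord(z)\geq 0$, and it is the case of interest later.

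Finally I would apply the homomorphism property directly to the explicit rational expression for $z$, obtaining
\[
z_* = \left(\frac{(a-c)(b-d)}{(a-d)(b-c)}\right)_* = \frac{(a-c)_*(b-d)_*}{(a-d)_*(b-c)_*},
\]
which is exactly the assertion; along the way one also reads off $\ord(z)=\ord(a-c)+\ord(b-d)-\ord(a-d)-\ord(b-c)$, matching the spine length referred to in figure \ref{spine_and_tetra.pdf}. There is essentially no obstacle here: the only things to be careful about are the bookkeeping ensuring the four differences are nonzero (which is just distinctness of the four ends) and the observation that working inside $\C[[\zeta]]$ rather than $\C((\zeta))$ keeps all the relevant orders non-negative; neither point is serious.
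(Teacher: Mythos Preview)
Your proof is correct and is essentially the same argument as the paper's, just packaged more cleanly: the paper sets $p=a-c$, $q=b-d$, $r=a-d$, $s=b-c$, writes $z=\zeta^k\frac{(p_*+\zeta p')(q_*+\zeta q')}{(r_*+\zeta r')(s_*+\zeta s')}$, and expands via a geometric series to read off $z_*=\frac{p_*q_*}{r_*s_*}$, whereas you invoke directly that $x\mapsto x_*$ is a multiplicative homomorphism $\C((\zeta))^\times\to\C^\times$. Your observation that the ``preferred'' hypothesis is not actually used in the computation is also correct; the paper likewise only uses it to remark that $k\ge 0$, which plays no role in the identity itself.
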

\begin{proof}
Let $(a-c) = p, (b-d)=q,(a-d)=r, (b-c)=s$, all in $\C[[\zeta]]$. Then
$$z = \frac{pq}{rs} = \zeta^k \frac{(p_*+\zeta p')(q_*+\zeta q')}{(r_*+\zeta r')(s_*+\zeta s')} \;\;\;\;p',q',r',s'\in \C[[\zeta]], k\in \N$$
where $k\geq0$ since this is the preferred cross ratio. Then
$$z = \zeta^k\frac{p_*q_* + \zeta e}{r_*s_* + \zeta f}=\left(\frac{\zeta^k}{r_*s_*}\right)\frac{p_*q_* + \zeta e}{1-\zeta(\frac{-f}{r_*s_*})} \;\;\;\;e,f\in \C[[\zeta]]$$
so
$$z = \left(\frac{\zeta^k}{r_*s_*}\right)(p_*q_* + \zeta e)(1 + \zeta g) = \zeta^k \left(\frac{p_*q_*}{r_*s_*} + \zeta h\right) \;\;\;\;g,h\in\C[[\zeta]]$$
hence $$z_* = \frac{p_*q_*}{r_*s_*}$$
\end{proof}
This provides motivation that under good conditions we should be able to ignore all higher order information about cross ratios and developed positions if we only care about lowest order information about those objects. This is the subject of the next few lemmas.
\begin{lemma}\label{*det}
Suppose that $a,b,c \in \C[[\zeta]]$, that at most one of $\ord(a-b), \ord(a-c), \ord(b-c)$ is greater than 0, $z$ is the preferred cross ratio for the four ends $a,b,c,d$ and that $d \in \C[[\zeta]]$. Then $(d-a)_*, (d-b)_*$ and $(d-c)_*$ are determined by  $(a-b)_*, (a-c)_*, (b-c)_*$, $z_*$ and the orders of those terms.
\end{lemma}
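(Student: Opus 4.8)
The plan is to read everything off the developing equation (\ref{develop_eq}). Writing $D = (b-c)z-(a-c)$ for its denominator, a direct computation gives
\[
d-a = \frac{(a-c)(a-b)}{D},\qquad d-b = \frac{(b-c)(a-b)z}{D},\qquad d-c = \frac{(a-c)(b-c)(z-1)}{D}.
\]
Since $\ord$ is a valuation on $\C((\zeta))$ (so $\ord(xy)=\ord(x)+\ord(y)$, $(xy)_*=x_*y_*$, and likewise for quotients), and since each of the three numerators above is a \emph{product}, its order and leading coefficient are immediately determined by $(a-b)_*,(a-c)_*,(b-c)_*,z_*$ and the corresponding orders; here one also uses that $z-1$ always has order $0$ with leading coefficient $z_0-1$ (because $z$ is a preferred cross ratio we have $z_0\neq 1$, and $z_0$ is recoverable from $z_*$ together with $\ord(z)$). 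So the whole statement reduces to showing that $\ord(D)$ and $D_*$ are determined by this same low-order data.

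The only way this could fail is an unexpected cancellation in the subtraction defining $D$: namely $\ord((b-c)z)=\ord(a-c)=:m$ together with $((b-c)z)_*=(a-c)_*$, which would raise $\ord(D)$ above $m$ by an amount invisible to the leading-order data. I would exclude this in two cases, using the hypothesis that at most one of $\ord(a-b),\ord(a-c),\ord(b-c)$ exceeds $0$. If $\ord(a-b)>0$, then $\ord(a-c)=\ord(b-c)=0$, so $m=0$ and hence $\ord(z)=0$; moreover $a-c=(a-b)+(b-c)$ with $\ord(a-b)>0$ forces $(a-c)_*=(b-c)_*$, so the cancellation condition reads $(b-c)_*z_0=(a-c)_*=(b-c)_*$, whence $z_0=1$, contradicting that $z$ is preferred. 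If instead $\ord(a-b)=0$, then from the first displayed formula $\ord(d-a)=\ord(a-c)+\ord(a-b)-\ord(D)=m-\ord(D)<0$, so $d\notin\C[[\zeta]]$, contradicting the hypothesis $d\in\C[[\zeta]]$.

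With the cancellation excluded, $\ord(D)=\min(\ord((b-c)z),\ord(a-c))$, and $D_*$ is the leading coefficient of whichever summand has the smaller order, or $(b-c)_*z_*-(a-c)_*$ in the equal-order case; in every case this is determined by the stated data. Feeding $\ord(D)$ and $D_*$ back into the three formulas then expresses $\ord(d-a),\ord(d-b),\ord(d-c)$ and the leading coefficients $(d-a)_*,(d-b)_*,(d-c)_*$ explicitly in terms of $(a-b)_*,(a-c)_*,(b-c)_*,z_*$ and their orders, as required. The one genuinely substantive step is the case split above: its content is that the two hypotheses not present in the plain deformation-variety setting — $d\in\C[[\zeta]]$ and $z$ preferred (so $z_0\neq 1$) — are precisely what is needed to prevent $D$ from dropping in order in the two possible ways.
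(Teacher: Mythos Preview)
Your proof is correct and follows essentially the same approach as the paper: write $d-a,d-b,d-c$ as products over the denominator $D=(b-c)z-(a-c)$, then rule out hidden cancellation in $D$ by a case split that invokes $z_0\neq 1$ in one case and $d\in\C[[\zeta]]$ in the other. Your two-way split on whether $\ord(a-b)>0$ is slightly more economical than the paper's three cases (its Case~1, $\ord(b-c)>0$, cannot actually satisfy the equal-order cancellation condition since then $\ord((b-c)z)>0=\ord(a-c)$, so it is subsumed by your $\ord(a-b)=0$ branch), but the substance is identical.
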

\begin{proof}

Note the following relations:
\begin{eqnarray}
(d-a) &=& \left(\frac{1}{(b-c)z - (a-c)}\right)(a-b)(a-c)\\
(d-b) &=& \left(\frac{1}{(b-c)z - (a-c)}\right)(b-c)(a-b)z\\
(d-c) &=& \left(\frac{1}{(b-c)z - (a-c)}\right)(a-c)(b-c)(z-1)
\end{eqnarray} 
Let $q = (b-c)z - (a-c)$. If $\ord(q) = n \geq 0$ then $q_* = q_n \neq 0$ and 
$$ \frac{1}{q} = \frac{1}{\zeta^n q_n + \zeta^{n+1} q'} = \frac{1}{\zeta^n q_n} \frac{1}{1+\zeta(\frac{q'}{q_n})} = \frac{1}{\zeta^n q_n}\sum_{m=0}^\infty \left(-\frac{q'}{q_n}\right)^m\zeta^m$$
where $q' \in \C[[\zeta]]$. Thus $(\frac{1}{q})_* = \frac{1}{q_n} = \frac{1}{q_*}$. \\

First, $z$ is the preferred cross ratio, so $\ord(z) \geq 0$. 
$\ord(q) \geq \min\{\ord(b-c)+\ord(z), \ord(a-c)\}$, with equality unless $\ord(b-c) + \ord(z) = \ord(a-c)$ and $(b-c)_*z_* - (a-c)_* = 0$.  We rule this out by considering the possible cases for this given that at most one of $\ord(a-b), \ord(a-c), \ord(b-c)$ is greater than 0:
\begin{enumerate}
\item $\ord(b-c) > 0$ and so $\ord(a-c)=0$. If this is true, then since $\ord(z) \geq 0$, then  $((b-c)z - (a-c))_0 = -(a-c)_0 \neq 0$, so this case is impossible.
\item $\ord(a-b) > 0$ and so $\ord(b-c)=\ord(a-c)=0$. $\ord(b-c) + \ord(z) = \ord(a-c)$ so $\ord(z)=0$. Then we have that $(b-c)_0z_0 - (a-c)_0 = 0$, so $(b_0-c_0)z_0 - (a_0-c_0) = 0$. Since $\ord(a-b) > 0, a_0 = b_0$ and therefore $z_0=1$, which is ruled out by the choice of preferred cross ratio.
\item $\ord(a-c) = n \geq 0,\ord(a-b)=\ord(b-c)=0$. $\ord(b-c) + \ord(z) = \ord(a-c)$ so $\ord(z) = n$.  Then the numerator of $d$ in equation (\ref{develop_eq}) is
\begin{eqnarray*}
(b-c)z a - (a-c) &=& (b-c)_0 z_n \zeta^n a - (a-c)_n\zeta^n b + \text{h.o.t.}\\
                 &=& (a-c)_n\zeta^n(a-b) + \text{h.o.t.}\\
                 &=& (a-c)_n(a-b)_0\zeta^n + \text{h.o.t.}
\end{eqnarray*}
Here $\text{h.o.t.}$ stands for ``higher order terms''. Since $(a-c)_n(a-b)_0 \neq 0$, the lowest power of $\zeta$ in the numerator is less than that in the denominator, which is strictly greater than $n$ because of the cancellation, and so $d \notin \C[[\zeta]]$, contradicting the hypothesis.
\end{enumerate}
So we have equality: $\ord(q) = \min\{\ord(b-c)+\ord(z), \ord(a-c)\}$, and $q_*$ is one of $(b-c)_*z_*, (a-c)_*$ or $(b-c)_*z_* - (a-c)_*$, none of which is zero. We then have that
\begin{eqnarray}
(d-a)_* &=& \left(\frac{1}{q_*}\right)(a-b)_*(a-c)_*\\
(d-b)_* &=& \left(\frac{1}{q_*}\right)(b-c)_*(a-b)_*z_*\\
(d-c)_* &=& \left(\frac{1}{q_*}\right)(a-c)_*(b-c)_*(z-1)_*
\end{eqnarray}
For $(d-c)_*$, since $z_0 \neq 1$, $(z-1)_* = (z-1)_0 = z_0 -1$, which is either $z_*-1$ or just $-1$ depending on $\ord(z)$. In all cases therefore, $(d-a)_*, (d-b)_*$ and $(d-c)_*$ and their orders are determined by  $(a-b)_*, (a-c)_*, (b-c)_*$, $z_*$ and their orders.
\end{proof}
Similar analysis shows that case 3 in the lemma is the only case in which $d$ can fail to be in $\C[[\zeta]]$.
\begin{defn}
For each triple $\{a,b,c\}$ of distinct ends of $T_\zeta$ we define the {\bf tripod of $\{a,b,c\}$} to be the smallest connected subtree of $T_\zeta$ that contains an infinite tail of each end (recall that an end is an infinitely long path from $H$). The unique trivalent vertex of the tripod is the {\bf center} of the tripod.
\end{defn}
\begin{defn}
A triple $\{a,b,c\}$ of distinct ends of $T_\zeta$ is {\bf domestic} if the tripod of $\{a,b,c\}$ contains $H$ and {\bf foreign} otherwise.
\end{defn}
We can thus rephrase the condition in Lemma \ref{*det} on the orders of $(a-b), (a-c)$ and $(b-c)$ as the triple $\{a,b,c\}$ being domestic.
\begin{lemma}\label{*det2}
 Let $\tri^{(1)}, \tri^{(2)},\ldots,\tri^{(n)}$ be a chain of triangles with specified dihedral angles $w^{(i)}$ between $\tri^{(i)}$ and $\tri^{(i+1)}$. Suppose initial points $e, e', e''$ for the locations of the 3 vertices of $\tri^{(1)}$ are chosen so that $\ord(e-e') = \ord(e'-e'') = \ord(e''-e) = 0$,  all developed vertices are in $\C[[\zeta]]$ and suppose that each triple of developed vertices given by those triangular faces is domestic. Then for every developed vertex $f \in \C[[\zeta]]$ corresponding to a vertex of one of the $\tri^{(i)}$, $f_0$ depends only on $e_0, e'_0, e''_0$ and the $z_*^{(i)},\ord(z^{(i)})$ where $z^{(i)}$ is the preferred cross ratio corresponding to the dihedral angle $w^{(i)}$.
\end{lemma}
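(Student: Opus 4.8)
The plan is to induct along the chain, developing one triangle at a time. Since neighbouring triangles $\tri^{(i)}$ and $\tri^{(i+1)}$ are two faces of a common tetrahedron they share an edge, so $\tri^{(i+1)}$ has exactly one vertex not already a vertex of $\tri^{(i)}$, and that vertex is obtained from the three vertices of $\tri^{(i)}$ and the dihedral angle $w^{(i)}$ by developing via equation (\ref{develop_eq}). Hence every developed vertex is a vertex of some $\tri^{(i)}$, and it is enough to show by induction on $i$ that the constant terms of the three vertices of $\tri^{(i)}$ depend only on $e_0,e'_0,e''_0$ and the data $z_*^{(j)},\ord(z^{(j)})$ for $j<i$.

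The key point is that this statement as phrased is not strong enough to push the induction through: developing a new vertex requires the leading terms \emph{and} orders of the pairwise differences of the three old vertices, not merely their constant terms — this is precisely what Lemma \ref{*det} consumes and produces. So I would strengthen the inductive hypothesis to: for each triangle $\tri^{(i)}$ with vertices $a,b,c$, the constant terms $a_0,b_0,c_0$, together with $\ord(a-b),\ord(a-c),\ord(b-c)$ and $(a-b)_*,(a-c)_*,(b-c)_*$, are all determined by $e_0,e'_0,e''_0$ and $\{z_*^{(j)},\ord(z^{(j)}):j<i\}$.

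For the base case $i=1$, the vertices of $\tri^{(1)}$ are $e,e',e''$, with constant terms $e_0,e'_0,e''_0$; by hypothesis all three pairwise differences have order $0$, so their leading terms are $e_0-e'_0$, $e_0-e''_0$, $e'_0-e''_0$. For the inductive step, write the vertices of $\tri^{(i)}$ as $a,b,c$ with $\{a,b\}$ the edge shared with $\tri^{(i+1)}$, and let $d$ be the remaining vertex of $\tri^{(i+1)}$, developed from $a,b,c$ via $w^{(i)}$. Relabelling $a,b,c$ cyclically — which changes neither whether $\{a,b,c\}$ is domestic nor the set of orders and leading terms of its three pairwise differences — I may assume the preferred cross ratio $z^{(i)}$ is the cross ratio $\tfrac{(a-c)(b-d)}{(a-d)(b-c)}$ appearing in (\ref{develop_eq}), since the three orientation-preserving cross ratios $w^{(i)},\tfrac{w^{(i)}-1}{w^{(i)}},\tfrac{1}{1-w^{(i)}}$ of $\{a,b,c,d\}$ are exactly those obtained by cyclically permuting the first three arguments. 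Now all hypotheses of Lemma \ref{*det} hold: $a,b,c,d\in\C[[\zeta]]$ since all developed vertices are, $\{a,b,c\}$ is domestic by assumption, and $z^{(i)}$ is the preferred cross ratio; so $(d-a)_*,(d-b)_*,(d-c)_*$ and their orders are determined by $(a-b)_*,(a-c)_*,(b-c)_*$, $z_*^{(i)}$ and the relevant orders, all of which are available by the inductive hypothesis and the given data about $z^{(i)}$. In particular $d_0=a_0+(d-a)_0$, where $(d-a)_0$ equals $(d-a)_*$ if $\ord(d-a)=0$ and equals $0$ otherwise (note $d\neq a$ since the four ends are distinct), so $d_0$ is determined; and the pairwise-difference data for the vertex set $\{a,b,d\}$ of $\tri^{(i+1)}$ is determined ($\{a,b\}$ from the inductive hypothesis, $\{a,d\}$ and $\{b,d\}$ from Lemma \ref{*det}). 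This completes the induction, and taking $f$ to be any vertex of any $\tri^{(i)}$ gives the claim, with in fact a little more: the orders and leading terms of all the relevant differences are determined as well.

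The main obstacle I anticipate is not any one hard computation but getting the inductive bookkeeping right — recognising that one must carry along the orders and leading coefficients of the pairwise vertex differences, not just the constant terms, so that Lemma \ref{*det} can be invoked at each step, and checking that the domesticity hypothesis supplied in the statement is precisely what licenses each such invocation. The minor point about replacing the dihedral angle by the preferred cross ratio via a cyclic relabelling also needs to be dispatched, but it is harmless.
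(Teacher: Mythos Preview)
Your proposal is correct and is essentially the same argument as the paper's: both proceed by induction along the chain, invoking Lemma \ref{*det} at each step, and both make explicit that the inductive hypothesis must carry not just the constant terms of the vertices but the orders and leading coefficients of their pairwise differences, then recover $f_0$ from $a_0$ together with $(f-a)_0$, which is $0$ or $(f-a)_*$ according to the order. Your treatment is slightly more detailed on the relabelling needed to feed the \emph{preferred} cross ratio into Lemma \ref{*det}, which the paper leaves implicit.
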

\begin{proof}
This is essentially an induction using Lemma \ref{*det}. Each triangular face with vertex positions $\{a,b,c\}\subset \C[[\zeta]]$ we develop from is domestic. This implies that at most one of the orders $\ord(a-b), \ord(a-c), \ord(b-c)$ is greater than 0. If $a$ and $b$ are positions for vertices at opposite ends of an edge of a $\tri^{(i)}$, then by induction, $(a-b)_*$ and $\ord(a-b)$ are determined by $(e-e')_*$, $(e'-e'')_*$, $(e''-e)_*$, the $z_*^{(i)}$ and their orders. All of those orders are 0 by our choice of $e,e',e''$, and so $(e-e')_* = (e-e')_0 = e_0 - e'_0$ and similarly for the others, so in fact $(a-b)_*$ and $\ord(a-b)$ are determined by $e_0, e'_0, e''_0$, the $z_*^{(i)}$ and $\ord(z^{(i)})$.\\

Note that $(a-b)_0$ is either 0 or $(a-b)_*$ depending on if $\ord(a-b) > 0$ or not, so $(a-b)_0 = a_0 - b_0$ is also determined by the same data. Now consider a path of edges walking along a sequence of vertex positions $f^{(1)}, f^{(2)}, \ldots, f^{(m)}$ where $f^{(1)} \in \{e,e',e''\}$ and $f^{(m)} = f$. Then for each neighbouring pair $(f^{(j)}, f^{(j+1)})$,  $f^{(j)}_0 - f^{(j+1)}_0$ is determined by the data, we start from one of $e_0, e'_0, e''_0$ and so again by induction $f^{(m)}_0 = f_0$ is also determined by the data.
\end{proof}

Now suppose we have an ideal triangulation $\mathcal{T}$ of an orientable 3-manifold with boundary $M$ and an embedded surface $S \subset M$ in (spun-)normal form relative to $\mathcal{T}$. We lift $\mathcal{T}$ to $\til{\mathcal{T}}$, a triangulation of $\til{M}$, the universal cover of $M$ and lift $S$ to $\til{S} \subset \til{M}$ a surface in (spun-)normal form relative to $\til{\mathcal{T}}$. We allow the possibility that $S$ is boundary parallel. Dual to $\til{S}$ we have a tree denoted $T_{\til{S}}$, which we can view as made by gluing together spines, with the tree dual to the intersection of $\til{S}$ with a tetrahedron of $\til{\mathcal{T}}$ being such a spine, as in Figure \ref{spine_and_tetra.pdf}. Vertices of $T_{\til{S}}$ correspond to connected components of $\til{M}\setminus \til{S}$.\\

We will assign dihedral angles (elements of $\C((\zeta))\setminus\{0,1\}$) to the six edges between pairs of triangles in each tetrahedron of $\mathcal{T}$. Given this information and any chain of triangles in $\mathcal{T}$ together with the positions of the vertices of the first triangle we can develop the positions of all vertices along the chain. 

\begin{defn}
For $f \in \C((\zeta)) \cup \{\infty\}$, the {\bf direction of $f$ from $H$} is
$$ f_H := \left\{
\begin{array}{ccl}
f_0 & \text{ if } & f \in \C[[\zeta]] \\
\infty & \text{ if } & f \in (\C((\zeta)) \cup \{\infty\}) \setminus \C[[\zeta]]
\end{array} \right.
$$
\end{defn}

\begin{cond}\label{degen_order}
The {\bf degeneration order condition} on dihedral angles assigned to a tetrahedron $t$ states that if $t$ contains no quadrilateral of $S$ then all dihedral angles $w$ must have $w_H \neq 0,1,\infty$ (and hence $\ord(w) = 0$), and if $t$ has $k$ quadrilaterals then $w_H$ is as in Figure \ref{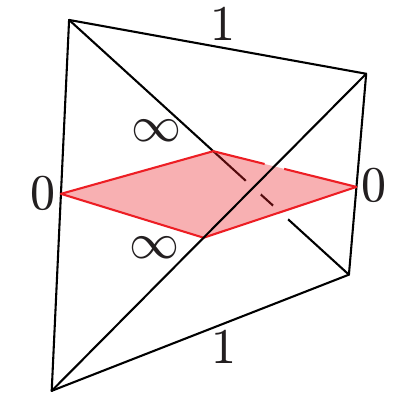}, and the order of the preferred cross ratio corresponding to $w$ is $k$.
\end{cond}

This is consistent with the connection between ideal points of the deformation variety and spun-normal surfaces as in \cite{tillmann_degenerations}. This also means that for a tetrahedron $t \in \til{\mathcal{T}}$, the spine dual to $S\cap t$ is the same shape as the corresponding spine in $T_\zeta$ we get when developing through any pair of triangles of $t$. Spines corresponding to neighbouring tetrahedra glue to each other in the same way in both contexts.

\begin{figure}[htb]
\centering
\includegraphics[width=0.3\textwidth]{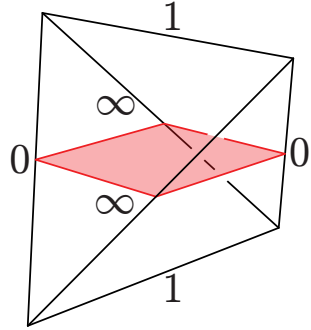}
\caption{The $w_H$ for dihedral angles $w$ in a tetrahedron with a quadrilateral. }
\label{degen_dihedral_angles.pdf}
\end{figure}

\begin{lemma}\label{region_domestic}
Suppose we assign dihedral angles to each tetrahedron of $\mathcal{T}$ that satisfy the degeneration order condition with respect to the surface $S$. Let $\tri^{(1)}, \tri^{(2)},\ldots,\tri^{(n)}$ be a chain of triangles in $\mathcal{\til{T}}$ and $R_0$ be the component of $\til{M} \setminus \til{S}$ that contains the central region of $\tri^{(1)}$. Suppose we can trace a path following $R_0$ continuously through the $\tri^{(i)}$ and we develop positions of the vertices starting from $e, e', e''$ for the locations of the 3 vertices of $\tri^{(1)}$ as in Lemma \ref{*det2}, and all developed positions are in $\C[[\zeta]]$. Then the positions for the triple of vertices for each $\tri^{(i)}$ is domestic.
\end{lemma}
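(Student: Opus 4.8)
The plan is to restate ``domestic'' as a statement about the location of $H$ inside $T_\zeta$ and then to transport it to $\til{S}$ using the bookkeeping supplied by the degeneration order condition. Recall first the reformulation implicit after Lemma \ref{*det}: for ends $a,b,c\in\C[[\zeta]]$ the triple $\{a,b,c\}$ is domestic --- i.e.\ its tripod contains $H$ --- exactly when at most one of $\ord(a-b),\ord(a-c),\ord(b-c)$ is positive, equivalently when the constant terms $a_0,b_0,c_0$ are not all equal. In the extreme case that all three pairwise orders vanish, the three ends leave $H$ along three distinct edges, so the \emph{center} of the tripod is $H$ itself; by the standing hypothesis $\ord(e-e')=\ord(e'-e'')=\ord(e''-e)=0$ this applies to $\tri^{(1)}$.

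Next I would set up the bridge to $\til{S}$. Write $t_i$ for the tetrahedron of $\til{\mathcal{T}}$ having $\tri^{(i)}$ and $\tri^{(i+1)}$ as faces, so developing along the chain places the four vertices of $t_i$ at four ends of $T_\zeta$. By the degeneration order condition and the discussion following Condition \ref{degen_order}, together with Lemmas \ref{*det_cr} and \ref{*det} for tracking lowest-order data, the spine in $T_\zeta$ spanned by these four ends has the same shape as the spine dual to $\til{S}\cap t_i$, and spines of consecutive tetrahedra are glued along their shared faces the same way in both pictures. Concatenating along the chain, developing thus realizes the subtree $\Sigma\subseteq T_{\til{S}}$ --- obtained by gluing the dual trees of the $\til{S}\cap t_i$ along the dual trees of the $\til{S}\cap\tri^{(i)}$ --- as a subtree of $T_\zeta$, via an isometric embedding $j$ with two features. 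First, $j$ sends a component $R$ of $\til{M}\setminus\til{S}$ (a vertex of $\Sigma$) to the vertex of $T_\zeta$ that developing assigns to it. Second, for each $i$ it carries the tree dual to $\til{S}\cap\tri^{(i)}$ --- a subtree of $\Sigma$ that is a tripod whose three legs run out to the cusps of $\tri^{(i)}$ and whose center is the central region --- onto the tripod of the three developed vertices of $\tri^{(i)}$, matching center with center.

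Granting this, the lemma follows quickly. The central region of $\tri^{(1)}$ lies in $R_0$, so $j$ sends it to $j(R_0)$; it is also the center of the tree dual to $\til{S}\cap\tri^{(1)}$, which $j$ sends to the center of the tripod of $\{e,e',e''\}$, namely $H$. Hence $j(R_0)=H$. Now fix $i$. Since the path follows $R_0$ and crosses $\tri^{(i)}$, it does so at a point off $\til{S}$, so some region $\rho$ of $\tri^{(i)}\setminus\til{S}$ lies in $R_0$; as a vertex of the tree dual to $\til{S}\cap\tri^{(i)}$ it is carried by $j$ to $j(R_0)=H$, while $j$ carries that whole tree onto the tripod of the three developed vertices of $\tri^{(i)}$. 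Hence $H$ is a vertex of the latter tripod, in particular lies on it, and by the first paragraph the triple of developed vertices of $\tri^{(i)}$ is domestic.

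The main obstacle is this bridge step: extracting $j$ rigorously from the degeneration order condition --- checking that the orders of the differences of the developed vertices agree with the combinatorial distances in $T_{\til{S}}$ (this is where the precise placement of the $0,1,\infty$ among the degenerate dihedral angles of a tetrahedron containing quadrilaterals, and Lemma \ref{*det}, get used), and that the tree dual to a face embeds compatibly in the spines of the two tetrahedra sharing it. A more hands-on alternative, avoiding heavy use of the discussion after Condition \ref{degen_order}, is a direct induction along the chain, maintaining as hypothesis that the developed triple of $\tri^{(i)}$ is domestic and that the configuration near $\tri^{(i)}$ matches $\til{S}\cap t_{i-1}$ with the region met by the path developing to $H$. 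The inductive step analyses the passage through $t_i$ using equation (\ref{develop_eq}) and the degeneration order condition in the manner of the proof of Lemma \ref{*det}, the point being that the existence of a path staying in $R_0$ forces the shared edge of $\tri^{(i)}$ and $\tri^{(i+1)}$ and the quadrilateral type of $t_i$ to be compatible, ruling out exactly the configurations that would break domesticity --- for instance a shared edge of positive order together with a quadrilateral separating that edge from the opposite one. I expect that case analysis --- matching, corner by corner, how the spine of $t_i$ sits in $T_\zeta$ against how $\til{S}\cap t_i$ sits in $t_i$ --- to be the crux.
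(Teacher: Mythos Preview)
Your proposal is correct and follows essentially the same route as the paper: both arguments use the degeneration order condition to match the spines in $T_\zeta$ with those in $T_{\til{S}}$, identify the vertex dual to $R_0$ with $H$, and then use the hypothesis that $R_0$ meets every $\tri^{(i)}$ to conclude that each developed tripod contains $H$. The paper's proof is the brief inductive version you describe as your ``hands-on alternative,'' stated informally; your formulation via the isometric embedding $j:\Sigma\hookrightarrow T_\zeta$ makes the same idea explicit, and you correctly flag that the bridge step --- the matching of spines and gluings guaranteed by the degeneration order condition --- is where the actual content lies, a point the paper glosses over with a sentence.
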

\begin{proof}
The triple of images of the vertices of $\tri^{(1)}$ is domestic by assumption. When we develop to $\tri^{(2)}$, the image of the new vertex ($f$ say) is arranged with respect to $e,e',e''$ in $\C[[\zeta]]$ in the same arrangement as the corresponding vertices in $T_{\til{S}}$, by the degeneration order condition (\ref{degen_order}). The tripod in $T_{\til{S}}$ corresponding to $\tri^{(1)}$ has center the vertex dual to $R_0$ and the tripod corresponding to $\tri^{(2)}$ together with the tripod corresponding to $\tri^{(1)}$ form a spine as in the right diagram of Figure \ref{spine_and_tetra.pdf}, as a subtree of $T_{\til{S}}$. See also Figure \ref{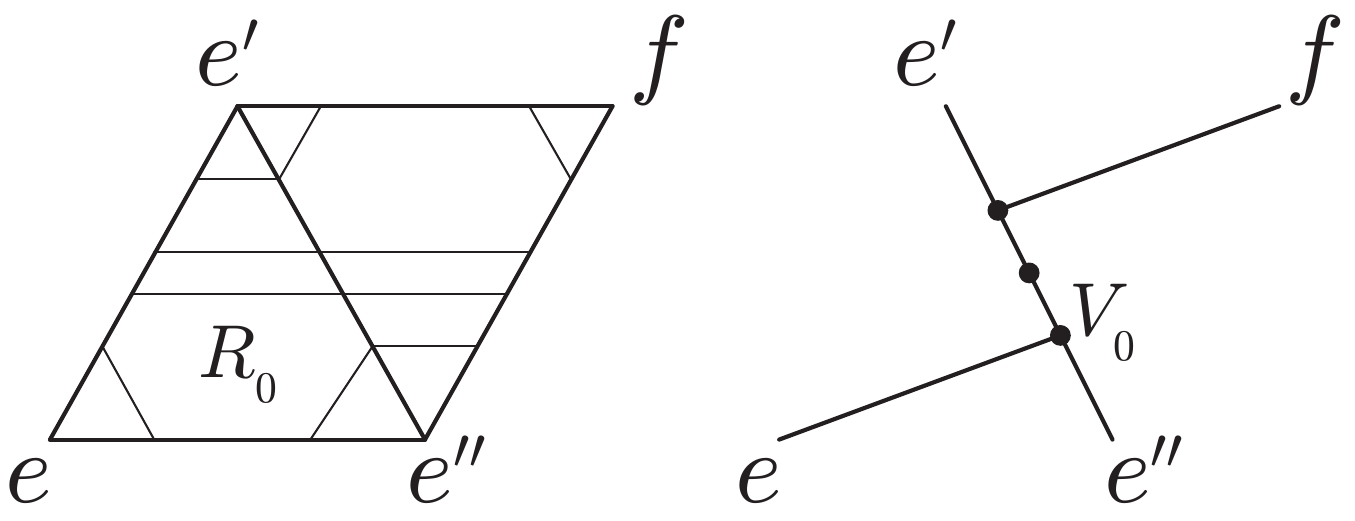}.\\
 
As we continue to develop to further triangles, the condition that $R_0$ has non-empty intersection with each $\tri^{(i)}$ corresponds to the condition that each new tripod in $T_{\til{S}}$ contains the central vertex of the tripod for $\tri^{(1)}$. The tripods we add in $\C[[\zeta]]$ satisfy the corresponding condition, which is that they contain the home vertex, and so are all domestic.  
\end{proof}

\begin{figure}[htb]
\centering
\includegraphics[width=0.6\textwidth]{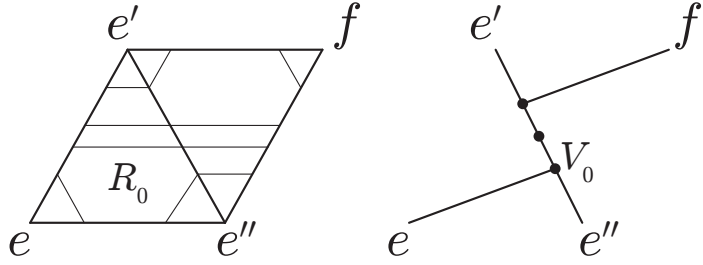}
\caption{Two neighbouring triangles and their intersections with $\til{S}$. The corresponding spine in $T_{\til{S}}$. The central region of the first triangle is marked $R_0$ and the corresponding vertex of the spine is marked $V_0$.}
\label{two_tris_and_spine.pdf}
\end{figure}

\begin{thm}\label{*det3}
Suppose we assign dihedral angles $w$ to each tetrahedron of $\mathcal{T}$ that satisfy the degeneration order condition with respect to the surface $S$. Let $\tri^{(1)}, \tri^{(2)},\ldots,\tri^{(n)}$ be a chain of triangles in $\mathcal{\til{T}}$ (so neighbouring triangles are both part of a single tetrahedron) and $R_0$ be the component of $\til{M} \setminus \til{S}$ that contains the central region of $\tri^{(1)}$. Suppose we can trace a path following $R_0$ continuously through the $\tri^{(i)}$ and we develop positions of the vertices starting from $e, e', e''$ for the locations of the three vertices of $\tri^{(1)}$ where the tripod of $\{e,e',e''\}$ has center $H$. Then for every developed position $f$ corresponding to a vertex of one of the $\tri^{(i)}$, $f_H$ depends only on $e_H, e'_H, e''_H$ and the $z_*,\ord(z)$ where $z$ is the preferred cross ratio for the dihedral angle $w$. 

\end{thm}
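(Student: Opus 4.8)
The plan is to reduce this to Lemmas \ref{region_domestic} and \ref{*det2}. Those lemmas are stated under stronger hypotheses than the present theorem: there one assumes that $e,e',e''$ lie in $\C[[\zeta]]$ with pairwise distinct $0$-th coefficients and, crucially, that \emph{every} developed position stays in $\C[[\zeta]]$, whereas here we know only that the tripod of $\{e,e',e''\}$ has centre $H$ (which allows, e.g., $e=\infty$) and nothing about the other positions. So the idea is to normalise the developed picture by an element of $\mathrm{PSL}_2(\C)$ so that the two lemmas become available, transport the conclusion back, and finally check that the answer does not depend on the normalisation used.

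The first observation is that a constant-coefficient element $g\in\mathrm{PSL}_2(\C)$, regarded inside $\mathrm{PSL}_2(\C((\zeta)))$, fixes the home vertex $H$ of $T_\zeta$ (its matrix has entries in $\C[[\zeta]]$ and a constant, hence unit, determinant) and acts on the $\CP^1$ of directions at $H$ by the ordinary M\"obius action of $g$; thus $(gp)_H=g(p_H)$ for every end $p$, and $g$ fixes each dihedral angle $w^{(i)}$ since these are cross ratios. By the remark following Lemma \ref{*det}, developing from $ge,ge',ge''$ with the angles $w^{(i)}$ produces exactly the positions $gf$, where $f$ ranges over the positions developed from $e,e',e''$. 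Only finitely many positions arise from the chain $\tri^{(1)},\dots,\tri^{(n)}$, so their directions form a finite set $D\subset\CP^1$; and since $e,e',e''$ are themselves among these positions, $e_H,e'_H,e''_H\in D$. I would then choose $\delta\in\CP^1\setminus D$ and $g\in\mathrm{PSL}_2(\C)$ with $g(\delta)=\infty$.

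Writing $\hat e=ge$, $\hat e'=ge'$, $\hat e''=ge''$, and $\hat f=gf$ for the developed positions: for every developed $f$ we have $f_H\in D$, so $f_H\neq\delta$ and $(\hat f)_H=g(f_H)\neq\infty$, hence every $\hat f$ lies in $\C[[\zeta]]$. Because the tripod of $\{e,e',e''\}$ has centre $H$, the directions $e_H,e'_H,e''_H$ are pairwise distinct, so $\hat e_0=g(e_H),\hat e'_0=g(e'_H),\hat e''_0=g(e''_H)$ are pairwise distinct finite values, i.e.\ $\ord(\hat e-\hat e')=\ord(\hat e'-\hat e'')=\ord(\hat e''-\hat e)=0$. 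The degeneration order condition, the region $R_0$, and the path traced through the $\tri^{(i)}$ are all unaffected by $g$, so Lemma \ref{region_domestic} applies to the chain developed from $\hat e,\hat e',\hat e''$ and shows that every triple of developed vertices coming from a face $\tri^{(i)}$ is domestic; then Lemma \ref{*det2} applies and yields a function $F$, independent of the configuration, with $\hat f_0=F\big(\hat e_0,\hat e'_0,\hat e''_0;\, z_*^{(i)},\ord(z^{(i)})\big)$ for each developed $\hat f$, where $z^{(i)}$ is the preferred cross ratio corresponding to $w^{(i)}$. Since $f=g^{-1}\hat f$ and $\hat f\in\C[[\zeta]]$,
\[
f_H=g^{-1}\big((\hat f)_H\big)=g^{-1}(\hat f_0)=g^{-1}\!\left(F\big(g(e_H),\,g(e'_H),\,g(e''_H);\; z_*^{(i)},\ord(z^{(i)})\big)\right).
\]

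Finally, the right-hand side still ostensibly involves the auxiliary $g$, which was chosen using the developed set $D$; but running the argument for two initial triples with the same directions $e_H,e'_H,e''_H$ and the same (combinatorial) chain, and using a single $g$ with $g(\delta)=\infty$ for some $\delta$ outside \emph{both} developed sets, makes the $F$-arguments, hence the values $\hat f_0$, coincide, so $f_H=g^{-1}(\hat f_0)$ agrees in both cases. Thus $f_H$ is a genuine function of $e_H,e'_H,e''_H$ and the $z_*^{(i)},\ord(z^{(i)})$, which is the assertion. The only step requiring real care is this normalisation — verifying that one may pre-compose with a constant element of $\mathrm{PSL}_2$ to push the entire developed configuration into $\C[[\zeta]]$ so that Lemmas \ref{region_domestic} and \ref{*det2} become applicable, and that the resulting formula is insensitive to that choice; everything else is those two lemmas together with the equivariance recorded after Lemma \ref{*det}.
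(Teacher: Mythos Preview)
Your proof is correct and follows essentially the same approach as the paper: combine Lemmas \ref{region_domestic} and \ref{*det2} after conjugating the entire developed configuration into $\C[[\zeta]]$ by an element of $\mathrm{PSL}_2$ that preserves cross ratios. The paper's proof is a one-sentence sketch of exactly this idea, whereas you have carefully spelled out the normalisation step (using a constant-coefficient $g\in\mathrm{PSL}_2(\C)$ fixing $H$ and acting by M\"obius transformations on directions at $H$) and verified that the resulting $f_H$ is independent of the auxiliary choice of $g$; this extra care is warranted and fills in details the paper leaves implicit.
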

\begin{proof}
This is a combination of Lemmas \ref{*det2}, \ref{region_domestic} and the observation that we can drop the condition that we develop only into $\C[[\zeta]]$ by noting that the result is true when we do restrict to $\C[[\zeta]]$, and we can always conjugate the developed vertex positions so that they are all in $\C[[\zeta]]$ without altering the cross ratios, and hence the developing map. 
\end{proof}
We will use the same trick again later, assuming without loss of generality that we develop only into $\C[[\zeta]]$ but then dropping this requirement for the result. 
\begin{rmk}\label{how_to_devel}
If we identify $\C[\zeta]/(\zeta) \cup \{\infty\}$ with $\bdry \Hthree$, then this gives us a way to define a developing map from chains of triangles that contiguously contain $R_0$ into $\Hthree$. Some of the triangles may be degenerate, having two vertices at the same location on $\bdry \Hthree$, although due to the construction we never have any triangles with all three vertices at the same location (this would correspond to a foreign triple of vertices).
\end{rmk}

We have yet to put any conditions on the assigned dihedral angles other than the degeneration order condition. In the construction following we will add some more conditions, in particular appropriate versions of the relations from equations (\ref{dv_eq1}) and (\ref{dv_eq2}), although in some cases only for certain tetrahedra. We will also require a condition playing the role of the gluing equations.

\section{The extended deformation variety}\label{The extended deformation variety}

Let $\mathcal{E}$ and $\til{\mathcal{E}}$ be the edge sets of $\mathcal{T}$ and $\til{\mathcal{T}}$. We consider disjoint subsets $\mathcal{E}_0, \mathcal{E}_+ \subset \mathcal{E}$ whose union is $\mathcal{E}$, and define $\til{\mathcal{E}}_0, \til{\mathcal{E}}_+$ as their preimages in $\til{\mathcal{T}}$. The idea is that $\til{\mathcal{E}}_0$ will be the set of developed edges of zero length in $\Hthree$, and $\til{\mathcal{E}}_+$ the set of developed edges of positive (i.e. non-zero) length. We will refer to edges in $\til{\mathcal{E}}_0$ and $\til{\mathcal{E}}_+$ as \textbf{zero-length} and \textbf{positive-length} edges respectively. \begin{defn}\label{bad_loop}
Given a subset of edges $\mathcal{E}_0 \subset \mathcal{E}$ we say that a loop of edges in $\til{\mathcal{E}}$ is a {\bf bad loop} if exactly one edge is in $\til{\mathcal{E}}_+$ and all others are in $\til{\mathcal{E}}_0$.  A subset of edges $\mathcal{E}_0 \subset \mathcal{E}$ has {\bf no bad loops} if there is no such loop.
\end{defn}
We will only allow subsets $\mathcal{E}_0$ with no bad loops. In particular, if two of the three edges of a face of $\mathcal{T}$ are in $\mathcal{E}_0$ then the third must also be to avoid a bad loop.

\begin{defn}
We refer to the three possibilities for the arrangement of edges of $\mathcal{E}_0$ around a triangle of $\mathcal{T}$ or  $\til{\mathcal{T}}$ as  {\bf types} 111 (no zero-length edges), 21 (one zero-length edge), or 3 (all zero-length edges). 
\end{defn}
\begin{defn}
There are five possibilities for the arrangement of edges in $\mathcal{E}_0$ around each tetrahedron of $\mathcal{T}$ or  $\til{\mathcal{T}}$, as in Figure \ref{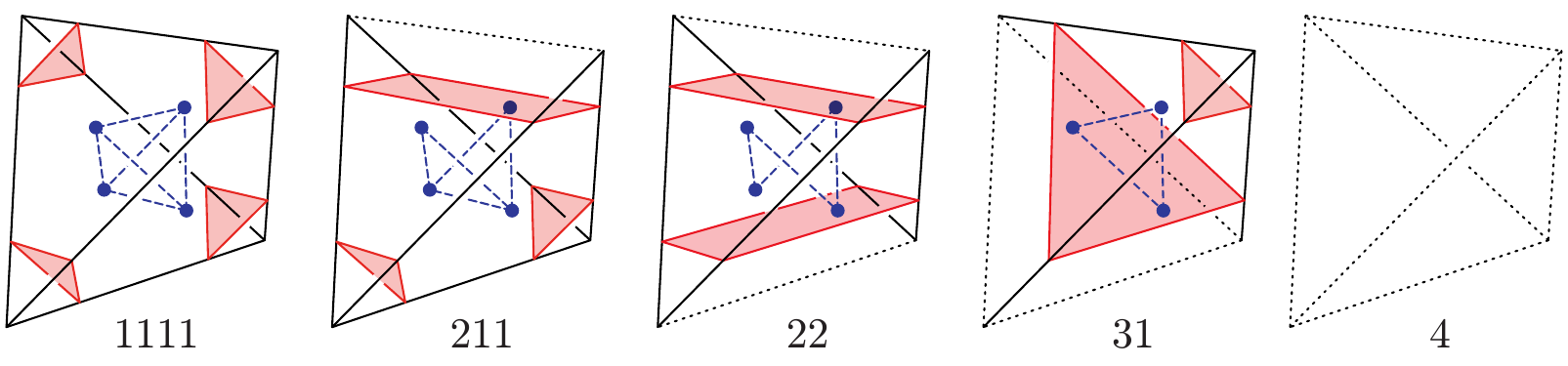}. We refer to these five possibilities as {\bf types} $1111$, $211$, $22$, $31$ and $4$ respectively. 
\end{defn}
The naming convention in these two definitions is derived from the size and number of equivalence classes of vertices within the tetrahedron, where two vertices are equivalent if they are connected by a zero-length edge. See also Figure \ref{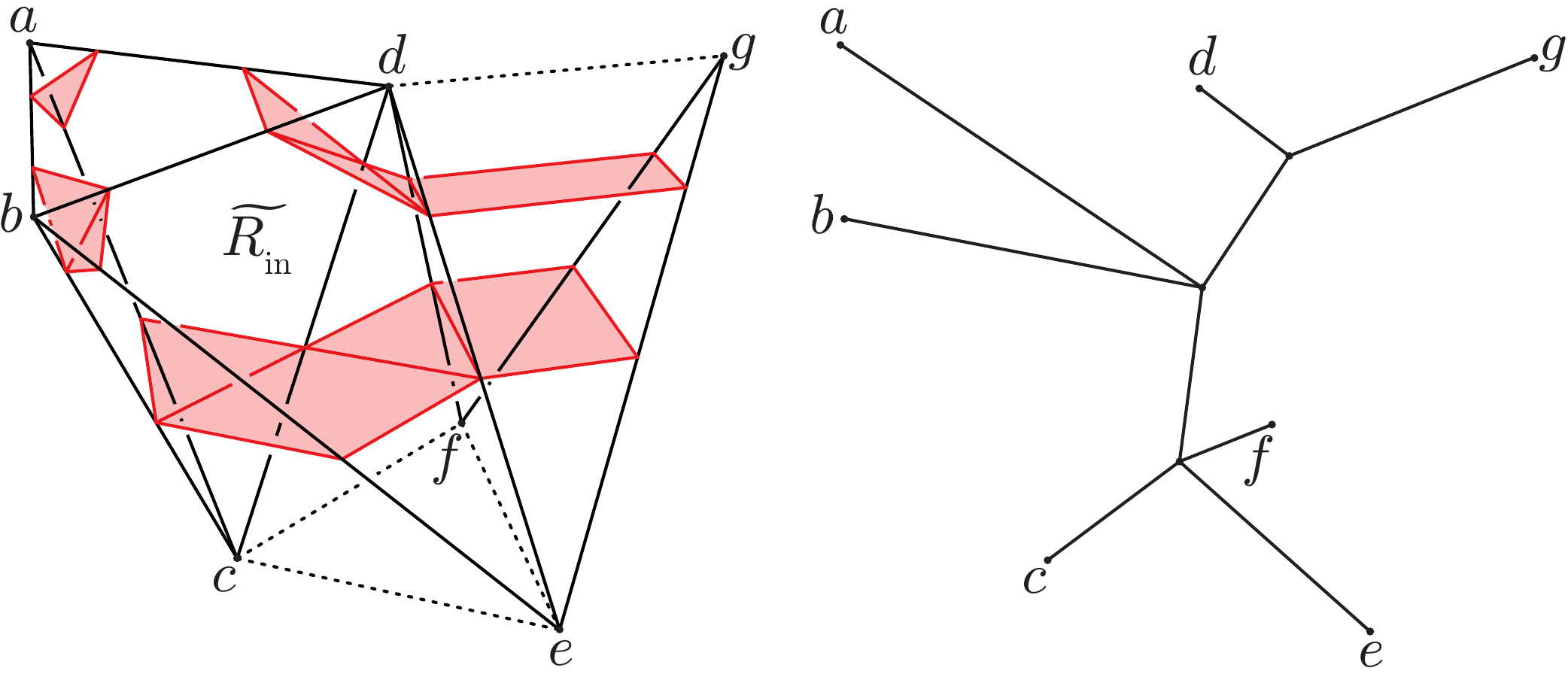}.

\begin{figure}[htb]
\centering
\includegraphics[width=\textwidth]{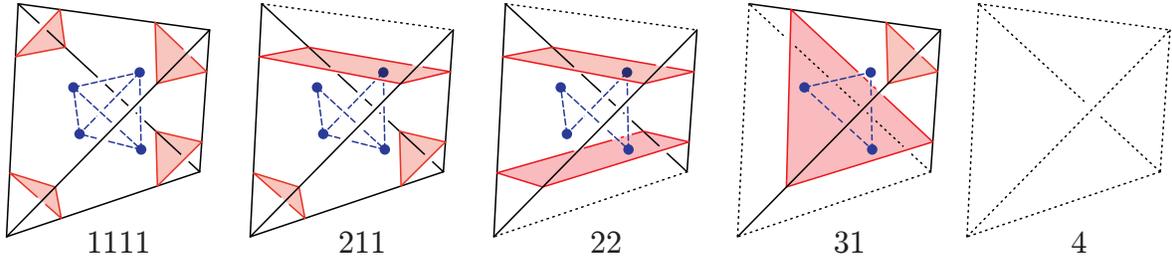}
\caption{The 5 possible arrangements of edges in $\mathcal{E}_0$ (the dotted tetrahedron edges) around a tetrahedron with corresponding parts of normal surfaces. The barycenters of faces of the tetrahedra are shown with large dots, and a valid chain of triangles corresponds to a path of these dots along the graph formed by the dashed lines.}
\label{5_tetra_types_w_graphs.pdf}
\end{figure}

\begin{figure}[htb]
\centering 
\includegraphics[width=\textwidth]{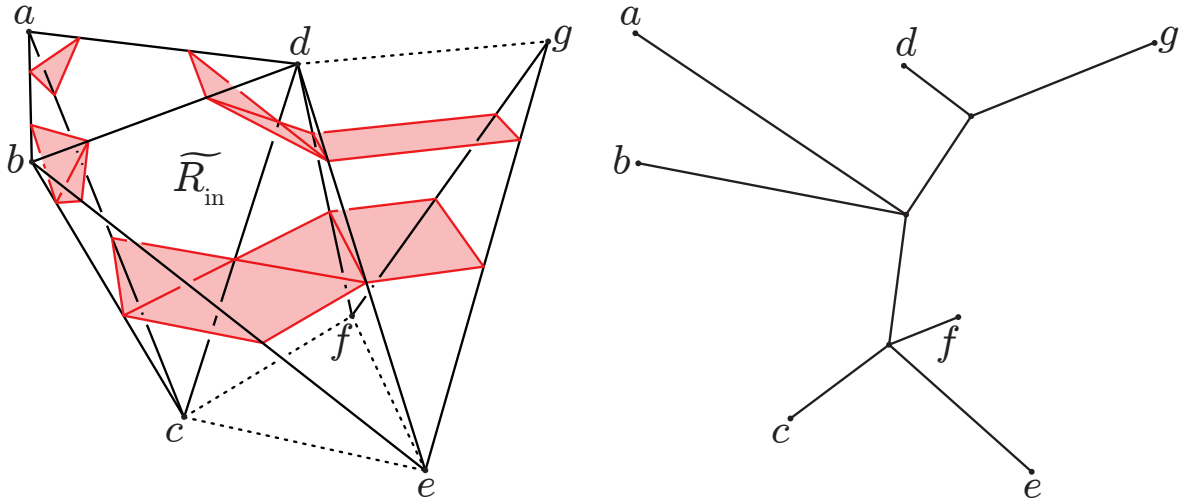}
\caption{From left to right, tetrahedra in $\til{\mathcal{T}}$ of type $1111$, $211$, $31$, $22$. The dual tree.}
\label{4_tetra_and_regions.pdf}
\end{figure}

\begin{defn}\label{E_0_to_horo-normal}
Given a subset $\mathcal{E}_0 \subsetneq \mathcal{E}$ with no bad loops, we construct a corresponding normal surface as follows: each tetrahedron $t\in\mathcal{T}$ has one of the five arrangements of edges in $\mathcal{E}_0$ as in Figure \ref{5_tetra_types_w_graphs.pdf}. We also place normal quadrilaterals and triangles in these tetrahedra as in the figure. Each face of $\mathcal{T}$ is of type 111, 21 or 3. These faces have either three normal arcs cutting off each vertex,  two normal arcs parallel to the edge in in $\mathcal{E}_0$, or no normal arcs, respectively. Therefore these normal quadrilaterals and triangles match up across the faces of $\mathcal{T}$ to form a normal surface.\\

We do not allow the empty surface, corresponding to $\mathcal{E}_0=\mathcal{E}$. If $S$ is a (non-empty) normal surface corresponding to a subset of edges with no bad loops, then we say $S$ is in {\bf horo-normal form\footnote{As mentioned in the introduction, the surface is similar to a horosphere, cutting the vertices that are all in the same place on $\bdry\Hthree$ away from all of the other vertices in other locations.}} (relative to $\mathcal{T}$), and that $S$ is a {\bf horo-normal surface}.
\end{defn}
\begin{rmk} We can construct $\mathcal{E}_0$ from a horo-normal surface by reading off from Figure \ref{5_tetra_types_w_graphs.pdf}, and so horo-normal surfaces and subsets $\mathcal{E}_0$ with no bad loops are in one to one correspondence.\end{rmk}
\begin{rmk}\label{horo is nbd of E0} An alternate way to see the horo-normal surface corresponding to a subset $\mathcal{E}_0$ with no bad loops is as the boundary of a small regular neighbourhood of the subcomplex of the triangulation generated by $\mathcal{E}_0$.

\end{rmk}
\begin{lemma}\label{horo-normal_properties}
A horo-normal surface $S$ is orientable, closed, and cuts $M$ into a compact inside region $R_\text{in}$ and an outside region  $R_\text{out}$ which contains $\bdry{M}$.
\end{lemma}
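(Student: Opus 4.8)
The plan is to realise $S$ as the frontier of an explicit regular neighbourhood, after which all three assertions become formal. Let $K \subset M$ be the union of a collar of $\bdry M$ — equivalently, in the ideal picture, a neighbourhood of the ideal vertices of $\mathcal{T}$ — together with the edges of $\mathcal{T}$ lying in $\mathcal{E}_0$. I would show that $S$ equals the frontier $\bdry N(K)$ of a regular neighbourhood $N(K)$ of $K$ in $M$, and then set
\[
R_\text{out} := N(K), \qquad R_\text{in} := \overline{M \setminus N(K)}\,.
\]
Granting this, $M = R_\text{in} \cup R_\text{out}$ with $R_\text{in} \cap R_\text{out} = \bdry N(K) = S$, so $S$ separates $M$; since $\bdry M \subset K \subset \operatorname{int} N(K)$ we get $\bdry M \subset R_\text{out}$; and $R_\text{in}$ is a closed subset of the compact manifold $M$ disjoint from $\bdry M$, hence compact. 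Orientability is then automatic: a separating surface in the connected orientable manifold $M$ is two-sided, and a two-sided surface in an orientable manifold inherits an orientation. Finally, ``closed'' (compact, without boundary) is essentially already recorded in Definition \ref{E_0_to_horo-normal}, since $S$ is a finite union of normal triangles and quadrilaterals, each contained in $\operatorname{int} M$, which were placed there precisely so as to match up along the faces of $\mathcal{T}$.

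The substance is therefore the identification $S = \bdry N(K)$, which I would verify cell by cell. For a tetrahedron $t$, the set $K \cap t$ is the union of neighbourhoods of the four ideal vertices of $t$ with the edges of $t$ in $\mathcal{E}_0$; according to which of the five types $1111$, $211$, $22$, $31$, $4$ the tetrahedron $t$ has, a regular neighbourhood of $K \cap t$ in $t$ is exactly the union of corner- and edge-neighbourhoods drawn in Figure \ref{5_tetra_types_w_graphs.pdf}, and its frontier in $t$ is precisely the collection of normal quadrilaterals and triangles placed in $t$ by Definition \ref{E_0_to_horo-normal}. For a face $f$ of $\mathcal{T}$, of type $111$, $21$, or $3$, the set $K \cap f$ is respectively the three vertex-corners, the two vertex-corners joined through the single $\mathcal{E}_0$-edge, or all of $f$, so the frontier of its neighbourhood in $f$ consists of three normal arcs, two parallel normal arcs, or nothing — which is exactly what the two adjacent tetrahedral pictures induce on $f$. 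Hence the local neighbourhoods glue to a global $N(K)$ with $\bdry N(K) = S$. It is at this point that ``no bad loops'' is used, in exactly the way noted after Definition \ref{bad_loop}: it rules out a face carrying two $\mathcal{E}_0$-edges and one $\mathcal{E}_+$-edge, which is what forces every face to have one of the three admissible types, so that the pieces are compatible.

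The only genuine work is this case analysis — matching the five tetrahedron types and three face types against the figures and against the normal pieces of Definition \ref{E_0_to_horo-normal} — and I expect that bookkeeping, rather than any conceptual point, to be the main obstacle. One technical caveat must be handled: when several $\mathcal{E}_0$-edges meet at a common vertex (in types $31$ and $4$, and some instances of $211$ and $22$), $K$ is locally a cone on a graph rather than an embedded $1$-manifold, so ``regular neighbourhood'' must be taken in the sense that each connected piece of $K$ deformation retracts onto a point or a tree; then $N(K)$ is still a codimension-$0$ submanifold whose frontier is a surface, and the face-by-face computation still yields $\bdry N(K) = S$. With that understood, nothing else in the statement requires more than the formal arguments above.
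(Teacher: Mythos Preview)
Your proof is correct and, in spirit, very close to the paper's own argument: the paper simply observes that $S$ is built from finitely many normal pieces (hence closed), that there is a ``consistent choice of the side of the parts of $S$ which face the vertices of the tetrahedra'' (hence orientable), and that the two regions follow. Your regular-neighbourhood description $S = \bdry N(K)$ is exactly what makes that ``consistent side'' precise; the paper leaves it as a one-line remark, whereas you carry out the tetrahedron-by-tetrahedron and face-by-face check explicitly.

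What your packaging buys over the paper's sketch is an honest identification of $R_\text{in}$ and $R_\text{out}$ (the paper just asserts their existence) and a clean route to compactness of $R_\text{in}$ via $R_\text{in} \subset M \setminus \bdry M$. Conversely, the paper's version makes clear that nothing deeper than ``finite normal surface with a transverse co-orientation'' is being used, so the case analysis you flag as the main work can legitimately be compressed to a glance at Figure~\ref{5_tetra_types_w_graphs.pdf}. One small comment: your deduction of orientability can be shortened --- once $S = \bdry N(K)$ you already have a global transverse direction (outward from $N(K)$), so two-sidedness is immediate and you need not route through ``separating $\Rightarrow$ two-sided''.
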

\begin{proof}
The quadrilateral and triangle parts of $S$ are prescribed and finite in each of the finitely many tetrahedra of $\mathcal{T}$, and so $S$ is made from finitely many parts and is therefore finite and closed. $S$ is orientable because there is a consistent choice of the side of the parts of $S$ which face the vertices of the tetrahedra. It follows that $S$ cuts $M$ into the two regions (not necessarily connected).
\end{proof}

\begin{defn}
A chain of triangles $\left(\tri^{(0)}, \tri^{(1)}, \ldots, \tri^{(n)}\right)$ in $\mathcal{T}$ (or $\til{\mathcal{T}}$) is {\bf valid} if $\til{R_{\text{in}}}$ (the lift of $R_\text{in}$, as in Lemma \ref{horo-normal_properties}) intersects $\tri^{(0)}$ as its central region (in other words all edges of $\tri^{(0)}$ are in $\til{\mathcal{E}}_+$), and the chain of triangles contiguously intersects $\til{R_{\text{in}}}$ (in other words, the edge between neighbouring triangles is in $\til{\mathcal{E}}_+$). 
\end{defn}

\begin{defn}\label{xdv_data}
The {\bf data for a point of the extended deformation variety} of $M$ with triangulation $\mathcal{T}$ and horo-normal surface $S$ consists of:
\begin{itemize} 
\item A cross ratio $z \in \C \setminus \{0,1\}$ for each tetrahedron of type $1111$
\item A cross ratio $z\zeta \in \C[[\zeta]]$, $z \in \C \setminus \{0\}$ for each tetrahedron of type $211$
\item A cross ratio $z\zeta^2 \in \C[[\zeta]]$, $z \in \C \setminus \{0\}$ for each tetrahedron of type $22$
\item Two complex angles $z_1,z_2 \in \C \setminus \{0\}$ for each tetrahedron of type $31$
\end{itemize}
This determines dihedral angles for each edge of a tetrahedron that is in $\til{\mathcal{E}}_+$, as in Figure \ref{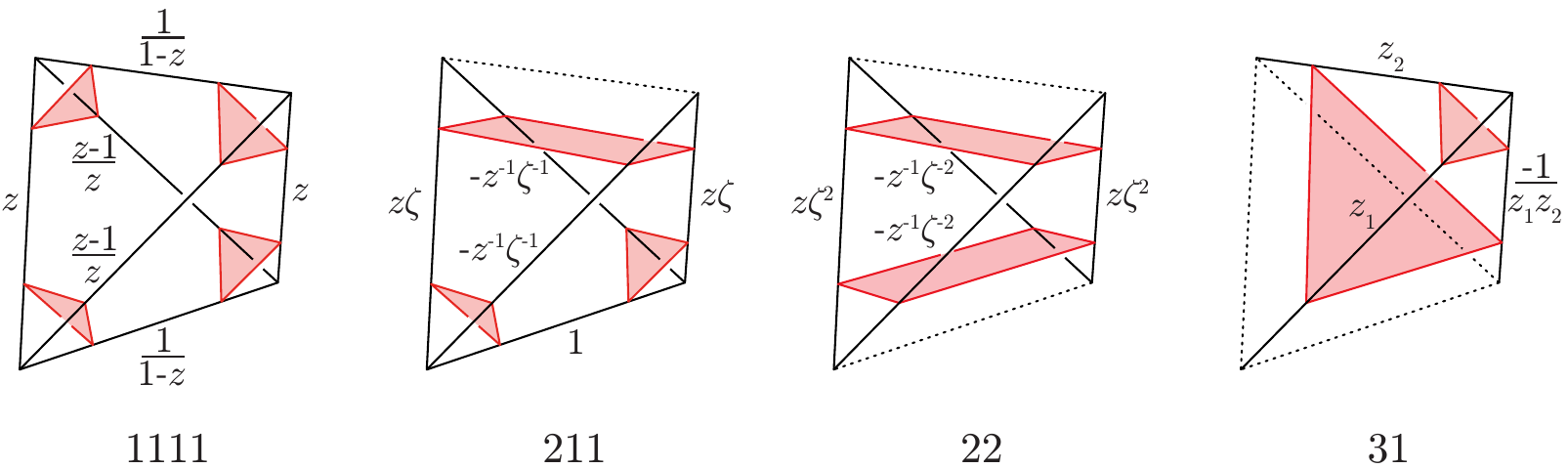} (only lowest orders are shown; the dihedral angles for type $211$ for example are $z\zeta, \frac{z\zeta-1}{z\zeta}$ and $\frac{1}{1-z\zeta}$, so the preferred cross ratio is always $z\zeta$). We record no data for tetrahedra of type $4$. 
\end{defn}

\begin{rmk}\label{xdv_data_v2}
What the data of Definition \ref{xdv_data} amounts to is the following:
\begin{enumerate}
\item Choose dihedral angles from $\C((\zeta)) \setminus \{0,1\}$ for each tetrahedron of types $1111$, $211$, and $22$ for each of the six edges, subject to:
\begin{enumerate}  
	\item The degeneration order condition
	\item Opposite dihedral angles being equal
	\item Equations (\ref{dv_eq1}) and (\ref{dv_eq2}) (from Section \ref{The deformation variety}) holding as equations in $\C((\zeta))$
\end{enumerate}
\item Choose dihedral angles from $\C((\zeta)) \setminus \{0,1\}$ for each tetrahedron of type $31$ for the 3 edges not in $\mathcal{E}_0$ subject to:
\begin{enumerate}
	\item The degeneration order condition
	\item Equation (\ref{dv_eq1}) holding as an equation in $\C((\zeta))$
\end{enumerate}
\end{enumerate}
\end{rmk}

\begin{rmk}
The data for a tetrahedron of type $31$ may seem strange at first. The conditions on the dihedral angles for a type $31$ tetrahedron are weaker than for other types of tetrahedron. This is to do with the fact that only the difference in positions between two vertices joined by a zero-length edge matters. The absolute positions of these vertices (other than their direction from $H$) does not. See the proof of Theorem \ref{xdv_for_a_rho} for more details.
\end{rmk}

\begin{figure}[htb]
\centering
\includegraphics[width=\textwidth]{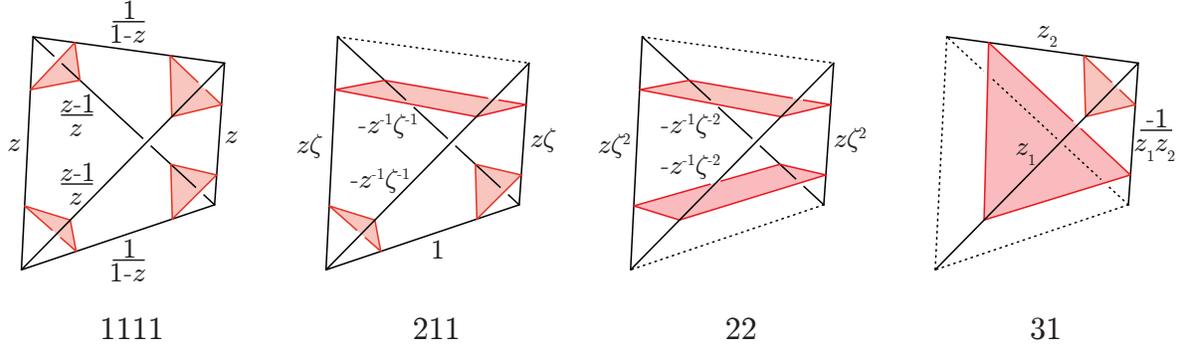}
\caption{Lowest order coefficients for dihedral angles associated to edges of tetrahedra in $\til{\mathcal{E}}_+$ as in Definition \ref{xdv_data}. When we develop we only ever use the preferred cross ratios.}
\label{4_tetra_types_w_dihedral_angles.pdf}
\end{figure}
For types $1111$, $211$, and $22$ the two equations and opposite dihedral angles being equal reduce the choice for each tetrahedron to a single cross ratio, and the degeneration order condition implies the power on the cross ratios as in Definition \ref{xdv_data}. In light of Theorem \ref{*det3} we only need to record the lowest order information for the preferred cross ratio in order to be able to develop positions of vertices along valid chains of triangles. For type $31$ we similarly only record data for dihedral angles that a valid chain of triangles in $\til{\mathcal{T}}$ could turn through. We likewise never need to record data for type $4$, since we will never develop through any of these dihedral angles.\\

The data of Definition \ref{xdv_data} then allows us develop through any valid chain of triangles, starting from three ends of $T_\zeta$ whose tripod center is $H$. We require one further condition on this data:

\begin{cond}\label{consistent_dev}
The {\bf consistent development condition} on the data for a point of the extended deformation variety states that given any two valid chains of triangles that start from the same triangle in $\til{\mathcal{T}}$, if a vertex $v\in\til{\mathcal{V}}$ is a vertex of triangles in both chains then the developed positions $f$ and $f'$ of $v$ under the two chains satisfy $f_H = f'_H$.
\end{cond}
\begin{rmk}
Conditions 1(b), 1(c) and 2(b) in Remark \ref{xdv_data_v2} ensure that valid chains contained within a single tetrahedron satisfy the consistent development condition.
\end{rmk}

\begin{ex}
If $\mathcal{E}_0 = \phi$ then $\mathcal{E}_+ = \mathcal{E}$, the horo-normal surface $S$ is boundary parallel and $\til{R_{\text{in}}}$ is isotopic to $\til{M}$, all tetrahedra are of type $1111$, the consistent development condition is equivalent to the gluing equations holding and we get the standard deformation variety for $M$ with triangulation $\mathcal{T}$. 
\end{ex}

\begin{defn}\label{xdv}
 Let $M$ be a 3-manifold with boundary a disjoint union of tori and with ideal triangulation $\mathcal{T}$ and $S$ a surface in horo-normal form relative to $\mathcal{T}$ such that:
 \begin{enumerate}
 \item $\til{R_\text{in}}$ is connected. 
 \item For each component of $\bdry M$ there exists an $e \in \mathcal{E}_+$ that has at least one endpoint on that component. 
 \item There exists a triangle $\tri \in \til{\mathcal{T}}$ of type 111 (i.e. all three edges are in $\til{\mathcal{E}}_+$, or equivalently $\til{R_{\text{in}}}$ intersects $\tri$ as its central region).
 \end{enumerate}
 Then the {\bf extended deformation variety} of $M$ with triangulation $\mathcal{T}$ and horo-normal surface $S$, $\widehat{\mathfrak{D}}(M;\mathcal{T};S)$ consists of points of the extended deformation variety (as in Definition \ref{xdv_data}) subject to the consistent development condition. We will also consider the disjoint union of all such varieties with a fixed triangulation but ranging over all horo-normal surfaces satisfying these conditions. We call this set the extended deformation variety of $M$ with triangulation $\mathcal{T}$, and write it as $\widehat{\mathfrak{D}}(M;\mathcal{T})$.
\end{defn}


Note that the second condition is automatic for manifolds with only one boundary component and the third holds as long as there are any tetrahedra of type $1111$ or $211$.
\begin{defn}\label{porous}
A surface in horo-normal form relative to $\mathcal{T}$ that satisfies conditions 1 and 2 of Definition \ref{xdv} is called {\bf porous}. 
\end{defn}
A surface is porous if, like a sponge, we can move through $\til{R_\text{in}}$ to get to every cusp of $\til{M}$, without having to go through $\til{R_\text{out}}$.

\begin{defn}\label{dev_map}
Given an element $Z \in \widehat{\mathfrak{D}}(M;\mathcal{T};S)$ we define the {\bf developing map} 
$$\Phi_Z: \til{\mathcal{V}} \rightarrow \bdry \Hthree$$
up to conjugation, determined by the distinct images (positions on $\bdry \Hthree$) we choose for the vertices of $\tri$. (In fact we choose ends of $T_\zeta$ for those positions, but by Theorem \ref{*det3} only the lowest order information of these matters.) To determine the image of any other vertex $v$ of $\til{\mathcal{V}}$, take any valid chain of triangles from $\tri$ to a triangle containing $v$, and develop along that chain. Such a chain exists by the porousity conditions: each $v$ corresponds to a lift of some component of $\bdry M$, let $e \in \mathcal{E}_+$ be as in condition 2 of porousity, then some lift $\til{e}$ of $e$ has $v$ as an endpoint and intersects $\til{R_{\text{in}}}$. Traverse triangles with non-empty intersection with $\til{R_\text{in}}$ from $\tri$ to $\til{e}$ (which we can always do by condition 1) and we get a position for $v$ on $\bdry \Hthree$ by forgetting all but the lowest order information, which is independent of the chain we took by the consistent development condition, Condition \ref{consistent_dev}. 
\end{defn}

\begin{lemma}\label{E0_is_order_1}
Suppose $\til{e} \in \til{\mathcal{E}}$ has endpoints $v,v'$ and we can develop through a valid chain of triangles to a triangle containing $\til{e}$. Then $\til{e} \in \til{\mathcal{E}}_0$ if and only if $\Phi_Z(v) = \Phi_Z(v')$.
\end{lemma}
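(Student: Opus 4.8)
The plan is to develop $\Phi_Z$ along the given valid chain and prove that, inside each tetrahedron the chain passes through, the developed images of the four vertices of that tetrahedron agree for two vertices exactly when those two vertices are joined by a path of edges of that tetrahedron lying in $\til{\mathcal{E}}_0$; the statement then follows by specialising this to the single edge $\til e$ and invoking the ``no bad loops'' hypothesis. Fix a valid chain $(\tri = \tri^{(0)}, \tri^{(1)}, \ldots, \tri^{(n)})$ with $\tri^{(n)}$ containing $\til e$, develop starting from ends $e,e',e''$ of $T_\zeta$ whose tripod has center $H$, and, as in the proof of theorem \ref{*det3}, conjugate so that every developed position lies in $\C[[\zeta]]$; then $\Phi_Z$ of a vertex developed to $f$ is simply $f_0$. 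If $\til e\in\til{\mathcal{E}}_+$ and $n=0$, then $\til e$ is an edge of $\tri^{(0)}$, its endpoints are two of $e,e',e''$, and having tripod center $H$ forces $e_0,e'_0,e''_0$ to be pairwise distinct, so $\Phi_Z(v)\neq\Phi_Z(v')$ and we are done. In every remaining case $n\geq 1$ --- this is automatic when $\til e\in\til{\mathcal{E}}_0$, since every edge of the type $111$ triangle $\tri^{(0)}$ lies in $\til{\mathcal{E}}_+$ --- so $\til e$ is an edge of the tetrahedron $t$ containing both $\tri^{(n-1)}$ and $\tri^{(n)}$, whose four vertices are exactly those of $\tri^{(n-1)}\cup\tri^{(n)}$ and hence are all developed.

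The heart of the proof, and the step I expect to be the main obstacle, is the claim: for each tetrahedron $t_i$ containing consecutive triangles $\tri^{(i-1)},\tri^{(i)}$ of the chain and for two vertices $u,u'$ of $t_i$, the developed images of $u$ and $u'$ agree if and only if $u$ and $u'$ are joined by a path of edges of $t_i$ lying in $\til{\mathcal{E}}_0$. I would prove this by induction along the chain. The base case is that the three developed images of the vertices of $\tri^{(0)}$ are distinct, again from the tripod condition. For the inductive step one develops across the dihedral angle $w$ between $\tri^{(i-1)}$ and $\tri^{(i)}$ via equation (\ref{develop_eq}); by the degeneration order condition \ref{degen_order} the value $w_H$ and the order of the preferred cross ratio of $w$ are precisely those dictated by the normal quadrilaterals of $S$ inside $t_i$ (figure \ref{degen_dihedral_angles.pdf}), and by lemma \ref{region_domestic} every developed triple along the chain is domestic, so lemma \ref{*det} (together with theorem \ref{*det3}) shows that the lowest-order data of the newly developed vertex --- in particular, which, if any, of the already developed vertices of $t_i$ it collides with --- is determined by this combinatorial data alone. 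As recorded just after condition \ref{degen_order}, this amounts to saying that the developed spine in $T_\zeta$ has the same shape as the spine dual to $S\cap t_i$, glued to the previously developed spines the same way; reading this off figure \ref{5_tetra_types_w_graphs.pdf} (or figure \ref{4_tetra_and_regions.pdf}), two vertices of $t_i$ develop to the same image exactly when they lie in the same equivalence class of vertices under $\til{\mathcal{E}}_0$. Making this rigorous is a case analysis over the tetrahedron types $1111$, $211$, $22$, $31$ (type $4$ cannot occur between consecutive triangles of a valid chain, as such triangles share an edge in $\til{\mathcal{E}}_+$), verifying in each case that substituting the cross-ratio orders of definition \ref{xdv_data} into (\ref{develop_eq}) collapses exactly the predicted vertex pairs and separates the rest; this is where the degeneration order condition, the validity (hence domesticity) of the chain, and the absence of bad loops all do their work.

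Granting the claim, I finish as follows. Apply the claim to the tetrahedron $t$ found above: $\Phi_Z(v)=\Phi_Z(v')$ if and only if $v$ and $v'$ are joined by a path of edges of $t$ in $\til{\mathcal{E}}_0$. If $\til e\in\til{\mathcal{E}}_0$ this holds trivially, taking the length-one path $\til e$. Conversely, suppose $v$ and $v'$ are joined by such a path while $\til e=vv'\in\til{\mathcal{E}}_+$; taking the path to be simple, it together with $\til e$ is a loop of edges of $\til{\mathcal{E}}$ with exactly one edge in $\til{\mathcal{E}}_+$, a bad loop in the sense of definition \ref{bad_loop}, contradicting the standing hypothesis that $\mathcal{E}_0$ has no bad loops. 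Therefore $\til e\in\til{\mathcal{E}}_0$ if and only if $\Phi_Z(v)=\Phi_Z(v')$, as required.
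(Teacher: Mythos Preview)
Your proof is correct and rests on the same idea as the paper's --- induction along the valid chain, with the degeneration order condition driving the inductive step --- but the paper organises it more economically. Rather than tracking which vertex images \emph{collide} inside each tetrahedron and then invoking ``no bad loops'' to pass from paths of $\til{\mathcal{E}}_0$-edges to the single edge $\til e$, the paper proves the slightly sharper statement that for any edge with developed endpoint positions $a,a'\in\C[[\zeta]]$ one has $\ord(a-a')=1$ if the edge lies in $\til{\mathcal{E}}_0$ and $\ord(a-a')=0$ if it lies in $\til{\mathcal{E}}_+$. This is exactly what the degeneration order condition hands you at each development step, so the induction is a single line, and since it speaks directly about the edge $\til e$ there is no need for your final bad-loop argument (which, incidentally, is already built into the classification of tetrahedra into the five types). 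Your tetrahedron-by-tetrahedron case analysis recovers the same information but at the coarser level of $\Phi_Z$-collisions; the paper's formulation keeps the order data and thereby avoids both the case split and the closing contradiction.
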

\begin{proof}
We prove a slightly stronger result: that if $a,a' \in \C[[\zeta]]$ are the positions of $v,v'$, developed along some valid chain of triangles then 
$$\ord(a-a') = \left\{ \begin{array}{cl} 
1 & \text{ if } e \in \mathcal{E}_0\\
0 & \text{ if } e \in \mathcal{E}_+
\end{array}\right.$$
We prove this by induction along chains of triangles starting with $\tri$. By definition, no edges of $\tri$ are in $\mathcal{E}_0$ so we start out developing from three distinct points of $\bdry \Hthree$, by which we mean three ends $e, e', e'' \in T_\zeta$ such that $e_H, e'_H$ $e''_H$ are all different. This shows that the base case holds. The inductive step follows from the degeneration order condition: for every new triangle we develop to, the order of the difference between the developed positions of the endpoints is determined by the orders for the edges in the triangle we develop from and the dihedral angle we develop through. The degeneration order condition ensures that we get the correct order for the new edges.  
\end{proof}

\begin{lemma}\label{verts_thru_S_coincide}
If $v, v' \in \til{\mathcal{V}}$ are in the same component $\left(\til{R_{\text{out}}}\right)_0$ of $\til{R_{\text{out}}}$, the lift of $R_{\text{out}}$ to $\til{M}$, then $\Phi_Z(v) = \Phi_Z(v')$.
\end{lemma}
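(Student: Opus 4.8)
The plan is to reduce the statement to Lemma \ref{E0_is_order_1} via the combinatorial claim: \emph{if $v$ and $v'$ lie in the same component of $\til{R}_{\text{out}}$, then there is a finite sequence of vertices $v = v_0, v_1, \ldots, v_k = v'$ in $\til{\mathcal{V}}$ with each $\{v_{j-1}, v_j\}$ an edge in $\til{\mathcal{E}}_0$.} Granting this, for each $j$ Lemma \ref{E0_is_order_1} applied along a chain of triangles that is valid until it leaves the region near $\til{R}_{\text{in}}$ and then reaches a triangle containing $\{v_{j-1},v_j\}$ (the induction in that lemma uses only the degeneration order condition, so it runs along such a chain just as well) gives $\Phi_Z(v_{j-1}) = \Phi_Z(v_j)$; composing these equalities yields $\Phi_Z(v) = \Phi_Z(v')$.

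\textbf{Proof of the combinatorial claim.} First observe that $\til{R}_{\text{in}}$ is precisely the union of the central regions of the tetrahedra of $\til{\mathcal{T}}$, i.e.\ of the components of $t \setminus \til{S}$ meeting no vertex of $t$: such a region is compact and misses $\bdry\til{M}$, so lies in $\til{R}_{\text{in}}$; conversely every corner region meets $\bdry\til{M}$ and so lies in $\til{R}_{\text{out}}$; since $\til{R}_{\text{in}}$ is connected (condition 1 of Definition \ref{xdv}) there is nothing else in it. Hence $\til{R}_{\text{out}}$ is exactly the union of all corner regions. Now two corner regions can be identified in $\til{R}_{\text{out}}$ only inside a single tetrahedron or across a face. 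Inside a tetrahedron $t$ the corner regions at two vertices coincide exactly when those vertices lie in one class of the equivalence relation defining the type of $t$, i.e.\ are joined by a path of $\mathcal{E}_0$-edges of $t$ (this is the content of Definition \ref{E_0_to_horo-normal} and figure \ref{4_tetra_and_regions.pdf} for the five types). Across a face $F$, whose normal arcs are three arcs (type $111$), two arcs parallel to the $\mathcal{E}_0$-edge (type $21$), or none (type $3$), the corner regions at vertices $u, u'$ of $F$ are identified only if $u = u'$ or $\{u,u'\}$ is an edge of $F$ in $\mathcal{E}_0$. Tracing a path in the component of $\til{R}_{\text{out}}$ containing $v$ and $v'$ through the sequence of corner regions it visits therefore produces a sequence of vertices, consecutive ones joined by an $\mathcal{E}_0$-edge, from $v$ to $v'$.

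\textbf{Applying Lemma \ref{E0_is_order_1} and the main obstacle.} It remains to check that Lemma \ref{E0_is_order_1} applies to each $\{v_{j-1}, v_j\}$. By condition 2 of omnipresence and connectedness of $\til{R}_{\text{in}}$ we have, as in Definition \ref{dev_map}, a valid chain from the triangle $\tri$ of Definition \ref{xdv} to a triangle with vertex $v_{j-1}$; from there we walk through triangles incident to $v_{j-1}$ (equivalently, along the $1$-skeleton of the triangulated cusp link at $v_{j-1}$) until we reach a face containing $\{v_{j-1}, v_j\}$, developing through the intervening dihedral angles, which carry the data recorded in Definition \ref{xdv_data} as long as the walk avoids turning through the (undetailed) dihedral angles of type-$4$ tetrahedra and the $\mathcal{E}_0$-edges of type-$31$ tetrahedra; that such a walk exists is the one point requiring a case-by-case check through the five tetrahedron types. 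Then the proof of Lemma \ref{E0_is_order_1} goes through verbatim and gives $\Phi_Z(v_{j-1}) = \Phi_Z(v_j)$. The main work, and the main obstacle, is the bookkeeping of the previous paragraph: one must describe precisely how the corner regions of all five types of tetrahedron glue to one another across the faces of $\til{\mathcal{T}}$ and confirm that this gluing pattern is governed entirely by which edges lie in $\mathcal{E}_0$; once that is in place the rest is routine.
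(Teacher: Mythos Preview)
Your overall strategy---reduce to an $\mathcal{E}_0$-edge path from $v$ to $v'$ and then invoke Lemma~\ref{E0_is_order_1} edge by edge---is exactly what the paper does, and your combinatorial claim that such a path exists is correct. The difference, and the place where your argument has a genuine gap, is in the \emph{order of construction}: you first produce an arbitrary $\mathcal{E}_0$-path and then try to reach each of its edges by a valid chain, whereas the paper first produces a single valid chain (by traversing along the component of $\til{S}$ bounding $(\til{R}_{\text{out}})_0$) and reads the $\mathcal{E}_0$-path off from it.

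The problem with your order is precisely the step you flag as ``the one point requiring a case-by-case check.'' That check can fail. An $\mathcal{E}_0$-edge $\{v_{j-1},v_j\}$ that arises in your path may be contained only in tetrahedra of type $4$; then every face containing it is of type $3$, and no valid chain can reach any such face (every edge of a type-$3$ face is in $\til{\mathcal{E}}_0$, so the last step of the chain would have to cross an $\mathcal{E}_0$-edge). Omnipresence guarantees each \emph{vertex} $v_{j-1}$ meets an $\mathcal{E}_+$-edge, but not that each \emph{edge} of $\til{\mathcal{E}}_0$ meets a type-$21$ face. So your proposed ``walk around $v_{j-1}$'' cannot in general land on a triangle containing $\{v_{j-1},v_j\}$, and the hypothesis of Lemma~\ref{E0_is_order_1} is not available for that step.

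The paper's route sidesteps this entirely: by following $\til{S}$ along the boundary of $(\til{R}_{\text{out}})_0$, every triangle visited carries a normal arc and is therefore of type $111$ or $21$, the edges crossed are automatically in $\til{\mathcal{E}}_+$, and the $\mathcal{E}_0$-edges that appear ``on the $(\til{R}_{\text{out}})_0$ side'' of the chain are, by construction, already contained in triangles of that valid chain. So Lemma~\ref{E0_is_order_1} applies immediately at each step. To repair your argument you would effectively need to choose your $\mathcal{E}_0$-path to hug $\til{S}$---which is the paper's construction.
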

\begin{proof}
We first develop from $\tri$ out to $v$, and then note that one valid chain of triangles from $\tri$ out to $v'$ is to go via a triangle containing $v$ and then traverse along the lift of $S$ that separates $\left(\til{R_{\text{out}}}\right)_0$ from $\til{R_{\text{in}}}$. This gives us a path of edges on the $\left(\til{R_{\text{out}}}\right)_0$ side, all of which are in $\til{\mathcal{E}}_0$. The vertex at the start of the path is $v$, the vertex at the end is $v'$. Now apply Lemma \ref{E0_is_order_1} at each step of the path.
\end{proof}

\begin{rmk}
If we allowed in Definition \ref{xdv} a choice of $S$ and hence $\mathcal{E}_0$ with a bad loop consisting of all but one edge in $\til{\mathcal{E}}_0$ and one edge in $\til{\mathcal{E}}_+$, then the two endpoints of the path of edges in $\til{\mathcal{E}}_0$, $v$ and $v'$, would necessarily be in the same component of $\til{R_{\text{out}}}$. By the above lemma, their developed positions would be the same on $\bdry \Hthree$, and then the tetrahedra that contain the single edges of $\til{\mathcal{E}}_+$ would not be able to have dihedral angles that satisfied both the degeneration order and consistent development conditions.
\end{rmk}

\begin{rmk}
If there is no $\tri \in \til{\mathcal{T}}$ of type 111 then all tetrahedra are of types $22$, $31$ or $4$. $\til{R_{\text{in}}}$ is then sandwiched between only two components of $\til{R_{\text{out}}}$, and given Lemma \ref{verts_thru_S_coincide}, the only sensible developing map we might define (see footnote \ref{footnote_dev_from_degen} in the proof of Theorem \ref{is_variety} on how we can develop starting from a triangle of type 21) would have all vertices appear at one of only two positions on $\bdry \Hthree$, and there would be no hope of constructing a representation from this data. If all tetrahedra are of type 4 then $\til{R_{\text{out}}} = \til{M}$ and the developing map would have to have all vertices appear at only one position on $\bdry\Hthree$ (this case is ruled out since $\mathcal{E}_0 \neq \mathcal{E}$). See also the proof of Theorem \ref{xdv_for_a_rho}.
\end{rmk}

\begin{thm}\label{is_variety}
$\widehat{\mathfrak{D}}(M;\mathcal{T};S)$ is an affine algebraic variety.
\end{thm}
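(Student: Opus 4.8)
The plan is to present $\widehat{\mathfrak{D}}(M;\mathcal{T};S)$ as a Zariski-closed subset of an ambient affine variety of ``raw data'', the defining equations being a translation of the consistent development condition \ref{consistent_dev}. By definition \ref{xdv_data}, before imposing \ref{consistent_dev} a point is a choice of one parameter in $\C\setminus\{0,1\}$ for each tetrahedron of type $1111$, one in $\C\setminus\{0\}$ for each of type $211$ and of type $22$, and two in $\C\setminus\{0\}$ for each of type $31$; the equality of opposite dihedral angles, the degeneration order condition \ref{degen_order}, and equations (\ref{dv_eq1})--(\ref{dv_eq2}) are all already built into this parametrisation, so there is nothing further to impose there. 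Since $\C\setminus\{0\}\cong V(tu-1)\subset\C^2$ and $\C\setminus\{0,1\}\cong V(tu-1,(t-1)v-1)\subset\C^3$ are affine, the product $\mathcal{A}$ of one such factor per tetrahedron is an affine algebraic variety; adjoining the needed inverses as extra coordinates realises $\mathcal{A}$ concretely as the zero set of finitely many polynomials in some $\C^m$.

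The main work is to show that the consistency equations are polynomial on $\mathcal{A}$. Fixing a starting configuration $e,e',e''$ (three ends of $T_\zeta$ with tripod center $H$), definition \ref{dev_map} assigns to each vertex $v$ on a valid chain $C$ from $\tri$ a developed end $f^C(v)$, and by theorem \ref{*det3} its direction from home $f^C(v)_H\in\CP^1$ is obtained from $e_H,e'_H,e''_H$ and the leading coefficients $z_*$ of the preferred cross ratios crossed by $C$ --- the relevant orders $\ord(z)$ being fixed by the tetrahedron types --- by iterating the develop formula (\ref{develop_eq}) and retaining only lowest order terms, exactly as in the proof of lemma \ref{*det}. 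Two features make this into a polynomial assignment: first, which branch of the trichotomy in the proof of lemma \ref{*det} occurs at each step is determined by the combinatorics of $C$ and the tetrahedron types, not by the point of $\mathcal{A}$, so for fixed $C$ and $v$ the map $Z\mapsto f^C(v)_H$ is a single rational expression, which we write in homogeneous coordinates $[a_C:b_C]$ with $a_C,b_C\in\C[\mathcal{A}]$; second, lemmas \ref{region_domestic} and \ref{*det}, which use only the degeneration order condition and hence hold throughout $\mathcal{A}$, show that the denominators occurring never vanish, so $(a_C,b_C)$ is nowhere $(0,0)$. Consequently, for any two valid chains $C,C'$ meeting at a common vertex $v$, the consistent development condition for that pair, $f^C(v)_H=f^{C'}(v)_H$, is the single polynomial equation $a_Cb_{C'}-a_{C'}b_C=0$; and since this condition is invariant under conjugating all developed positions simultaneously, the particular choice of $e,e',e''$ is irrelevant, and we may (as in the proof of theorem \ref{*det3}) assume all developing happens inside $\C[[\zeta]]$.

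Although $\til{\mathcal{T}}$ is infinite, so that there are infinitely many such pairs $(C,C')$, the coordinate ring $\C[\mathcal{A}]$ is a finitely generated $\C$-algebra and hence Noetherian, so the ideal generated by all the consistency equations is finitely generated. Therefore $\widehat{\mathfrak{D}}(M;\mathcal{T};S)$, the common zero set of all of them, equals the zero set of finitely many, and is a Zariski-closed subset of the affine variety $\mathcal{A}$, hence an affine algebraic variety. (One could instead exhibit an explicit finite generating set of consistency equations, generalising the finitely many gluing equations obtained when $\mathcal{E}_0=\emptyset$, but Noetherianity makes this unnecessary.) Finally, $\widehat{\mathfrak{D}}(M;\mathcal{T})$ is the disjoint union of finitely many such varieties, one per horo-normal surface satisfying the hypotheses of definition \ref{xdv}, hence also an affine algebraic variety.

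I expect the crux to be the middle step: converting the combinatorially-defined, a priori only $\CP^1$-valued developing map into honest polynomial functions on $\mathcal{A}$. The two delicate points there are establishing that the degeneration pattern encountered while developing along a \emph{valid} chain is pinned down by its combinatorics together with the tetrahedron types (this is what makes each $f^C(v)_H$ a single rational expression rather than a piecewise one), and handling the possibility that a developed position equals $\infty$, which is absorbed by the conjugation-into-$\C[[\zeta]]$ trick used for theorem \ref{*det3} together with the conjugation-invariance of condition \ref{consistent_dev}.
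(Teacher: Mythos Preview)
Your proof is correct. The set-up of the ambient affine variety and the argument that each individual consistency equation $f^C(v)_H=f^{C'}(v)_H$ reduces to a polynomial in the leading coefficients $z_*$ is essentially the same as the paper's; your remark that the branch of the case analysis in lemma~\ref{*det} is fixed by the combinatorics of the chain and the tetrahedron types is exactly the point the paper uses implicitly when it extracts $f_H=p_0/q_0$ as a rational function of the $z^{(i)}_*$.

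The genuine difference is in the finiteness step. You invoke Noetherianity of $\C[\mathcal{A}]$ to conclude that the ideal generated by all consistency equations is finitely generated, which is clean and entirely sufficient for the statement as phrased. The paper instead produces an explicit finite generating set: it introduces a graph $G$ whose vertices are the faces of $\mathcal{T}$, with an edge between two faces of a common tetrahedron whenever the edge they share lies in $\mathcal{E}_+$; it then argues (via a cut-and-paste reduction, which requires tracking first-order offset data at type~21 triangles so that one can restart developing from such a triangle) that consistent development around a finite generating set of loops for $H_1(G)$ implies consistent development everywhere. The advantage of the paper's route is that it yields, in principle, a concrete and computable list of defining equations, directly generalising the finitely many gluing equations one writes down when $\mathcal{E}_0=\emptyset$. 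Your route buys brevity, at the cost of losing that explicitness --- which you acknowledge in your parenthetical remark.
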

\begin{proof}
We may view $\widehat{\mathfrak{D}}(M;\mathcal{T};S)$ as a subset of $\C^{N_1 + 2N_2}$, where $N_1$ is the number of tetrahedra of $\mathcal{T}$ of types $1111$, $211$ and $22$, and $N_2$ the number of type $31$, as in Definition \ref{xdv_data}. We now need to show that the consistent development condition \ref{consistent_dev} gives a finite number of polynomial conditions on these variables.\\

First consider all the dihedral angles as elements of $\C((\zeta))\setminus\{0,1\}$ and assume without loss of generality that all developed positions are in $\C[[\zeta]]$. By repeated application of equation (\ref{develop_eq}), every developed position $f$ is a rational function of the preferred cross ratios $z^{(i)} \in \C[[\zeta]]$ corresponding to the dihedral angles, and the initial three points $e,e',e''\in \C[[\zeta]]$, so 
$$f = f(z^{(i)},e,e',e'') = \frac{p(z^{(i)},e,e',e'')}{q(z^{(i)},e,e',e'')}$$
where $p$ and $q$ are polynomials. We can then write $p = \zeta^k(p_0 + \zeta p_1)$ where $p_0$ is a polynomial in the $z^{(i)}_*$ and $e_0,e'_0,e''_0$ (but not $\zeta$), and $p_1$ is a polynomial. We can similarly write $q = q_0 + \zeta q_1$ (where there is no factor of $\zeta$ since $f \in \C[[\zeta]]$). Then 
$$f = \zeta^k\frac{p_0+\zeta p_1}{q_0 + \zeta q_1}=\zeta^k\frac{p_0+\zeta p_1}{q_0(1 - \zeta \frac{q_1}{-q_0})}=\zeta^k\frac{p_0+\zeta p_1}{q_0}\left(1+\zeta \frac{q_1}{-q_0}+\left(\zeta \frac{q_1}{-q_0}\right)^2 + \ldots\right)$$
Thus $f_H = \frac{p_0}{q_0}$, or 0 if $k >0$, and the equation $f_H=f'_H$ can be rearranged into the form of a polynomial in the $z^{(i)}_*$ together with $e_0,e'_0,e''_0$ being equal to 0. Whether or not the two developed points coincide does not depend on the starting positions $e_0,e'_0,e''_0$, since moving those points consists of conjugating, which likewise conjugates all of the developed vertices of triangles in the chains, since cross ratios are preserved. Thus if we fix the values of $e'_0,e''_0$ and the $z^{(i)}_*$ we get a polynomial in only $e_0$, but which is constantly zero and so the polynomial cannot depend on $e_0$ at all. Similarly for $e'_0$ and $e''_0$, and the polynomial equation depends only on the $z^{(i)}_*$. \\

We require that there are a finite number of such polynomial equations needed to ensure consistent development. Again we assume without loss of generality that all developed positions are in $\C[[\zeta]]$. Given a triangle $\tri' \in \til{\mathcal{T}}$ with vertices $u',v',w'$ to which we have developed positions in $\C[[\zeta]]$, then due to Lemma \ref{*det}, $\Phi_Z(u'),\Phi_Z(v'),\Phi_Z(w')$ together with the first order coefficient of the difference between developed positions of a pair of these vertices with the same image under $\Phi_Z$ (if such a pair exists) are all the information we need in order to entirely determine $\Phi_Z(v)$ for further vertices\footnote{This is the sense in which we can develop starting from a triangle of type 21.\label{footnote_dev_from_degen}}. Moreover, the consistent development condition also means that this first order coefficient of the difference is also independent of the chain we take. If it were not, we could continue developing from two chains that produced different answers for the first order coefficient of the difference along some valid chain that connects $\tri'$ to a triangle of type 111. Then the different answers from the two chains would produce different positions of the images under $\Phi_Z$, contradicting consistent development. (If we cannot reach a triangle of type 111 then there was no choice of $\tri$ to start developing from in the first place.)\\

Let $C_1$ and $C_2$ be two valid chains of triangles that start at $\tri$ and end at $\tri'$:
$$C_1 = \left(\tri = \tri^{(0)}_1,\tri^{(1)}_1,\ldots,\tri^{(n_1)}_1 = \tri' \right)$$ 
$$C_2 = \left(\tri = \tri^{(0)}_2,\tri^{(1)}_2,\ldots,\tri^{(n_2)}_2 = \tri' \right)$$
Now suppose $C_1$ and $C_2$ agree at some triangle in the middle, at $\tri'' := \tri^{(j_1)}_1 = \tri^{(j_2)}_2$. We can then split these chains in two, getting:
$$\begin{array}{rcl}
C_{1,A} &=& \left(\tri = \tri^{(0)}_1,\tri^{(1)}_1,\ldots,\tri^{(j_1)}_1 = \tri'' \right)\\ 
C_{2,A} &=& \left(\tri = \tri^{(0)}_2,\tri^{(1)}_2,\ldots,\tri^{(j_2)}_2 = \tri'' \right)\\
C_{1,B} &=& \left(\tri'' = \tri^{(j_1)}_1,\tri^{(j_1+1)}_1,\ldots,\tri^{(n_1)}_1 = \tri' \right)\\ 
C_{2,B} &=& \left(\tri'' = \tri^{(j_2)}_2,\tri^{(j_2+1)}_2,\ldots,\tri^{(n_2)}_2 = \tri' \right)\end{array}$$ 

Then by the above discussion, consistency of development (possibly including consistency of first order coefficient of differences) between $C_1$ and $C_2$ is implied by consistency between $C_{1,A}$ and $C_{2,A}$, and between $C_{1,B}$ and $C_{2,B}$.\\

Consider the graph $\til{G}$ with vertex set the triangles of $\til{\mathcal{T}}$ and edges between two vertices if the corresponding triangles are in the same tetrahedron and the edge between them is in $\til{\mathcal{E}}_+$ (see the large dots and dashed lines in Figure \ref{5_tetra_types_w_graphs.pdf}). Any valid chain of triangles corresponds to a path in $\til{G}$. Showing that any pair of valid chains of triangles develop to the same result is equivalent to showing that any valid chain that is a loop in $\til{G}$ develops back to the starting position of the first triangle (for any choice of first triangle). \\

\begin{figure}[htb]
\centering
\includegraphics[width=0.4\textwidth]{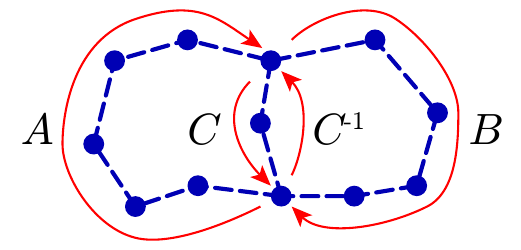}
\caption{Chains of triangles, combining a large loop from two smaller ones.}
\label{add_loops.pdf}
\end{figure}

See Figure \ref{add_loops.pdf}. Given a loop $A\circ B$ (i.e. following the chain $A$ then $B$), $A\circ B$ develops consistently if and only if $A\circ C \circ C^{-1}\circ B$ does (since all of our developing steps are reversible), which by the above discussion is implied by the loops $A\circ C$ and $C^{-1}\circ B$ developing consistently. Thus, all we need is a generating set of loops of triangles to develop consistently in order to ensure that every loop develops consistently. A finite set of such loops can be found, using a set of generating loops for $H_1(G)$, where $G$ is the graph defined analogously to $\til{G}$, but with vertices corresponding to the triangles of $\mathcal{T}$ rather than $\til{\mathcal{T}}$. The consistency of developing around each loop is a polynomial condition in the dihedral angles, and the result follows.
\end{proof}

We should worry that perhaps there are some strange solutions to the consistent development condition equations, for a triangulation for which we cannot define the developing map in the first place. This is analogous to the situation for the standard deformation variety, where it is conceivable that we could have a solution to the gluing equations but an edge of the triangulation not be essential. An edge is {\bf essential} if it cannot be homotoped into the boundary of the manifold. If an edge of the triangulation is not essential, then in the lift of the triangulation to the universal cover, the lift of the edge will still be inessential. But then we run into trouble in defining a developing map from the universal cover to $\Hthree$, since the two endpoints of the inessential edge would have to map to the same point on $\bdry \Hthree$, but this cannot happen for an ideal hyperbolic tetrahedron. See \cite{st_essential} for details. It is shown in \cite{st_essential} that this situation cannot happen; that the existence of a point of the standard deformation variety implies that all edges are essential.\\

For our purposes, an inessential edge is only a problem to defining a developing map if it is in $\mathcal{E}_+$. The analogous result is:
\begin{lemma}
Let $\mathcal{E}_0, \mathcal{E}_+$ be the partition of the edge set $\mathcal{E}$ of $\mathcal{T}$, corresponding to the horo-normal surface $S$. If $\widehat{\mathfrak{D}}(M;\mathcal{T};S)\neq \emptyset$ then every edge in $\mathcal{E}_+$  is essential.
\end{lemma}
\begin{proof}
The proof is similar to that of Lemma 1 in \cite{st_essential}. The main difference is that in addition to drilling out tubular neighbourhoods of the edges to form the handlebody manifold $H$, we also cut the manifold along the horo-normal surface and remove the component containing the cusps. Call the resulting drilled and cut manifold $H'$. In fact, cutting away the outside region(s) has only a small effect on the combinatorics of the drilled manifold. The combinatorics of drilled tetrahedra of types 1111, 211 and 22 are unchanged, and the effect is that we remove tetrahedra of type 4 and introduce boundary faces on tetrahedra of type 31 (see Figure \ref{5_tetra_types_w_graphs.pdf}). Thus $H'$ is also a handlebody, and so similarly to as in \cite{st_essential}, a point of the extended deformation variety gives us a developing map from the universal cover of $H'$, and so a representation $\rho:\pi_1H'\to\PSL$. The consistent development condition replaces the gluing equations, and tells us that any curve in the manifold that is null-homotopic before drilling and cutting maps to the identity isometry under $\rho$. The rest of the proof follows.
\end{proof}

This implies that whenever we have a point of the extended deformation variety, then we can indeed define a developing map for it.

\section{Porous horo-normal surfaces}\label{porous horo-normal surfaces}

In order to be able to define the extended deformation variety $\widehat{\mathfrak{D}}(M;\mathcal{T};S)$, we require that the horo-normal surface $S$ is porous. In fact, we will want each of the (finitely many) horo-normal surface relative to the triangulation to be porous. In Section \ref{Retriangulating}, we give an algorithm for changing a triangulation for which some horo-normal surfaces are not porous into a triangulation (of the same manifold) for which all are porous. However, for many triangulations, modification is unnecessary.

\subsection{Small manifolds with a single torus boundary component}\label{small one cusp manifolds}

\begin{thm}\label{small irred one cusp}
Suppose that $M$ is a small irreducible manifold with a single torus boundary component, and $\mathcal{T}$ a triangulation of
$M$. Then every horo-normal surface in $\mathcal{T}$ is porous.
\end{thm}
By a \textbf{small} manifold, we mean that the only closed incompressible surfaces are boundary parallel.

\begin{proof}
First note that $R_\text{out}$ is connected. This is true because, by Remark \ref{horo is nbd of E0}, $R_\text{out}$ is a small regular neighbourhood of the subcomplex of $\mathcal{T}$ generated by $\mathcal{E}_0$, and this subcomplex is connected since the vertices of all cells are at the single boundary component.\\

Let $R_\text{in}^{(1)}, R_\text{in}^{(2)}, \ldots, R_\text{in}^{(n)}$ be the components of $R_\text{in}$, with boundaries $S^{(i)}$. Compress all of the components of all of the $S^{(i)}$ as much as possible, to produce surfaces $S_*^{(i)}$ (each of which may not be a connected surface). We can do this in an ``innermost first'' order, keeping all of the resulting surfaces disjoint. That is, if we want to make a compression along a disk which intersects the union of the surfaces, first perform compressions along the innermost circle(s) in the intersection.  A compression move can only disconnect regions in the complement of a surface, never connect regions that were not connected before, so it makes sense to classify the new regions in the complement of the surface as either \textbf{outside} or \textbf{inside}, depending on whether they came from $R_\text{out}$ or one of the $R_\text{in}^{(i)}$. Each compression move has a reverse, which in terms of the complementary regions either adds a 1-handle to an inside region (or between two inside regions), or adds a 1-handle to an outside region (or between two outside regions), which we interpret as drilling a hole through an inside region. There are three possibilities for each component of the resulting surfaces $S_*^{(i)}$. Each is either: 

\begin{enumerate}
\item A closed incompressible surface (possibly a sphere) that is not boundary parallel, 
\item or a closed incompressible surface, that is a torus parallel to the boundary torus,
\item or an $S^2$ that bounds a ball.
\end{enumerate}

Case 1 is ruled out because the manifold is small and irreducible, so there is no graph of groups decomposition of $\pi_1M$ with surface groups as edge groups.  So every component is either a sphere that bounds a ball, or a boundary parallel torus. Suppose that there are $k$ boundary parallel tori. Ignoring for a moment any spheres from case 3, there are then $k$ alternating inside and outside $T^2 \times I$ regions between the tori, the outermost incident to $\bdry M$, and a central region homeomorphic to $M$. Now add in the spheres: each region has some number of possibly nested spheres within it, each of which flips its interior from inside to outside or vice versa.\\

Consider first the case that $k$ is even. Then the central region is an outside region. Now undo all of the compression moves, by attaching 1-handles and drilling through the inside regions. This connects all of the outside regions together to form $R_\text{out}$. This, together with the central region being an outside region implies that in fact $\til{R_\text{out}}$ is connected. To see this, note that adding 1-handles to, and drilling holes through the inside regions cannot divide the lift of the central outside region to the universal cover into more than one component, nor can it divide the lifts of any of the outside $T^2 \times I$ regions. \\

Next, since $\til{R_\text{in}}$ is not empty (since by definition, the horo-normal surface is non-empty), there is some edge in $\til{\mathcal{E}_+}$. The edge intersects the surface twice, going from $\til{R_\text{out}}$ to  $\til{R_\text{in}}$ to $\til{R_\text{out}}$. The two endpoints of this edge are in the same (the only) component of $\til{R_\text{out}}$. Therefore this edge, together with some edges from $\til{\mathcal{E}_0}$ form a bad loop, which is a contradiction to the surface being horo-normal.\\

Now suppose that $k$ is odd. Then the central region is an inside region. Again, undo all of the compression moves. We claim that $\til{R_\text{in}}$ is connected. If not, there must be a component that does not get connected to the central region as we undo the compression moves, resulting in some region $R_\text{in}^{(i)}$. This region is made from some number of $T^2 \times I$ pieces (possibly zero) and balls, connected together by 1-handles, and those pieces and handles drilled out in some way. The handles may be quite complicated, for example there could be a hole drilled through the central region along a curve that is non-trivial in $\pi_1M$, and then a 1-handle attached between $T^2 \times I$ pieces, following along inside the hole. Nevertheless, we claim that a given connected lift of the region, $\til{R_\text{in}^{(i)}}$, has a single component of $\til{R_\text{out}}$ surrounding it, and so by the same argument as for $k$ being even, this gives us a bad loop and a contradiction. To prove the claim, notice that near a cusp in $\til{M}$, there is a single component surrounding $\til{R_\text{in}^{(i)}}$, since $R_\text{out}$ is connected. When the part of $\til{R_\text{in}^{(i)}}$ near this cusp connects to a part near some other cusp, it does so by 1-handles, which are always surrounded by the same, single outside component.\\

Thus, there can be only one component of $\til{R_\text{in}}$. This proves condition 1 for a surface being porous. Condition 2 follows trivially from the fact that there is only one cusp and that the horo-normal surface is non-empty, which means that $\mathcal{E}_+$ is non-empty.
\end{proof}

\subsection{Retriangulating}\label{Retriangulating}
In this section we show how to alter any given triangulation of a manifold in order to turn all of the horo-normal surfaces that are not porous into ones that are. The tool we will use to do this involves the following object.

\begin{defn}
A {\bf pillow} is a pair of tetrahedra that share an edge and are the only two tetrahedra incident to that edge. See Figure \ref{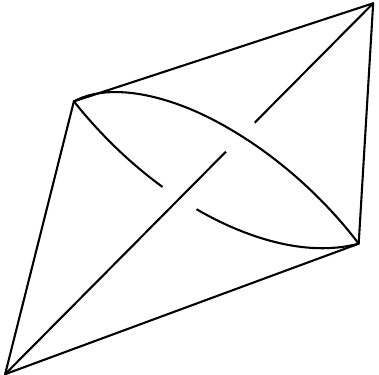}. 
\end{defn}

\begin{figure}[htb]
\centering
\includegraphics[width=0.25\textwidth]{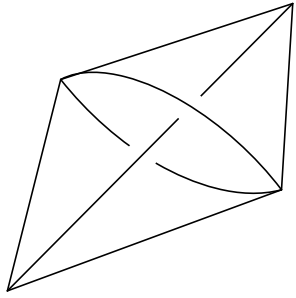}
\caption{A pillow.}
\label{pillow.pdf}
\end{figure}

\begin{defn}
The move of {\bf inserting a pillow} alters a triangulation $\mathcal{T}$ as follows. Take a pair of triangles of $\mathcal{T}$ which share an edge $e$, open up a gap between the two pairs of faces of tetrahedra that meet at the pair of triangles and insert a pillow between them. \end{defn}
\begin{rmk}
Inserting a pillow corresponds to Matveev's lune move (Definition 1.2.9 of \cite{matveev}).
\end{rmk}
\begin{defn}
In the new triangulation, $\mathcal{T}'$, in place of the edge $e$ there are two corresponding edges $e^\uparrow$ and $e^\downarrow$, and in place of each triangle involved are two corresponding triangles, each of which shares the same vertices as the edge or triangle it came from. We refer to these edges and triangles as {\bf splits} of $e$ and the original triangle respectively. If we go on to make further pillow insertions we will also recursively refer to splits of those splits of $e$ as splits of $e$.
\end{defn}

Let $\mathcal{E}$ be the edge set of the triangulation $\mathcal{T}$ and $\mathcal{T'}$ the triangulation obtained by inserting a pillow across an edge $e$. $\mathcal{T'}$ has edge set $\mathcal{E}'=(\mathcal{E} \setminus \{e\})\cup\{e^\uparrow,e^\downarrow,f\}$ where $f$ is the edge joining opposite vertices of the pair of triangles. If we have a subset of zero-length edges $\mathcal{E}_0 \subset \mathcal{E}$, we can ask what possible subsets  $\mathcal{E}'_0 \subset \mathcal{E}'$ are compatible with $\mathcal{E}_0$, meaning that $\mathcal{E}_0\subset \mathcal{E}_0'$ if $e\notin \mathcal{E}_0$ and $(\mathcal{E}_0\setminus \{e\}) \cup \{e^\uparrow,e^\downarrow\} \subset \mathcal{E}_0'$ if $e\in \mathcal{E}_0$. We analyse the possibilities in Figure \ref{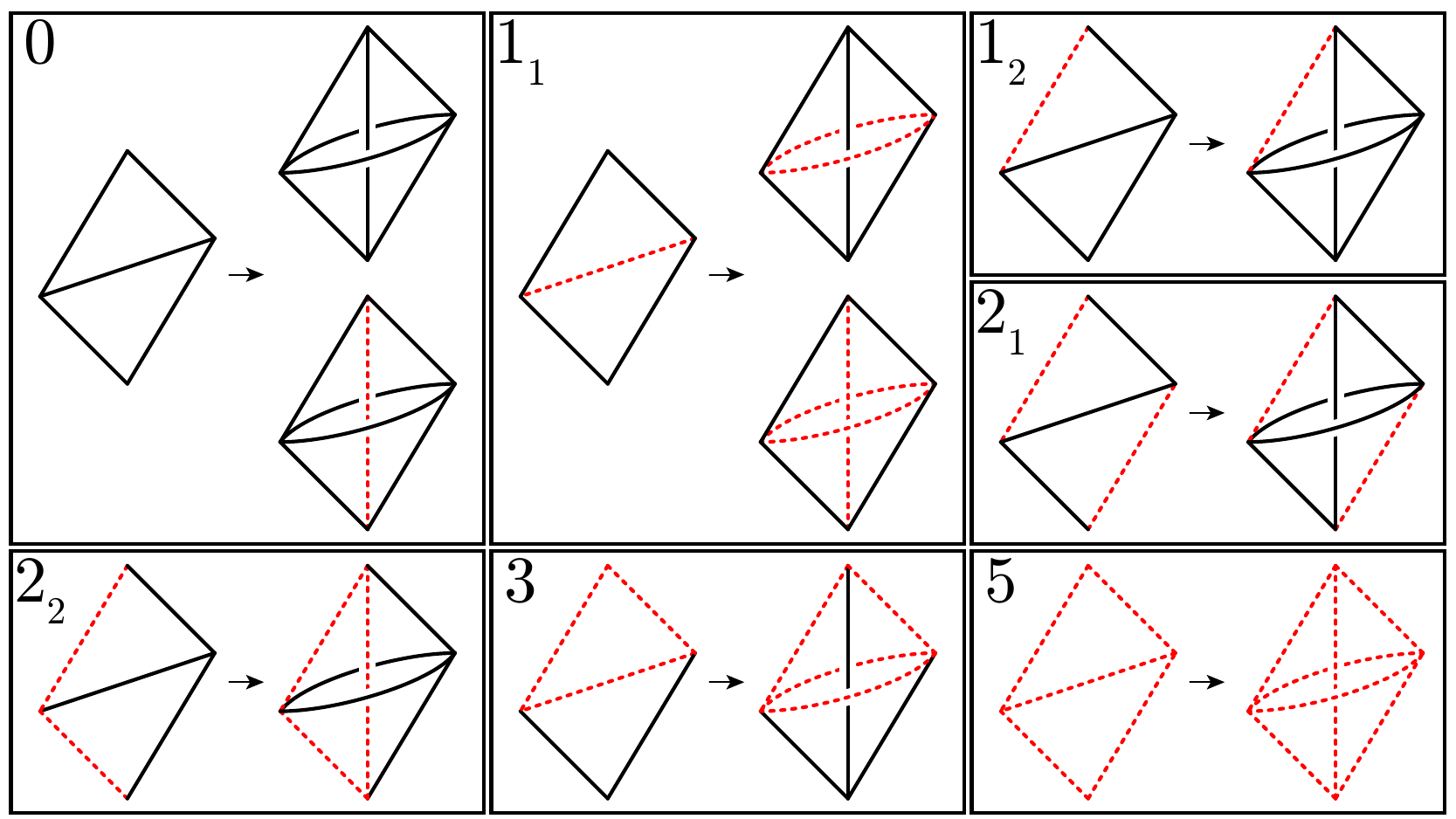}, which shows all possible configurations without bad loops in the original pair of triangles. Note that the splits of $e$ must either both be in $\mathcal{E}'_0$ or both be in $\mathcal{E}'_+$ in order to avoid a bad loop.

\begin{figure}[htb]
\centering
\includegraphics[width=\textwidth]{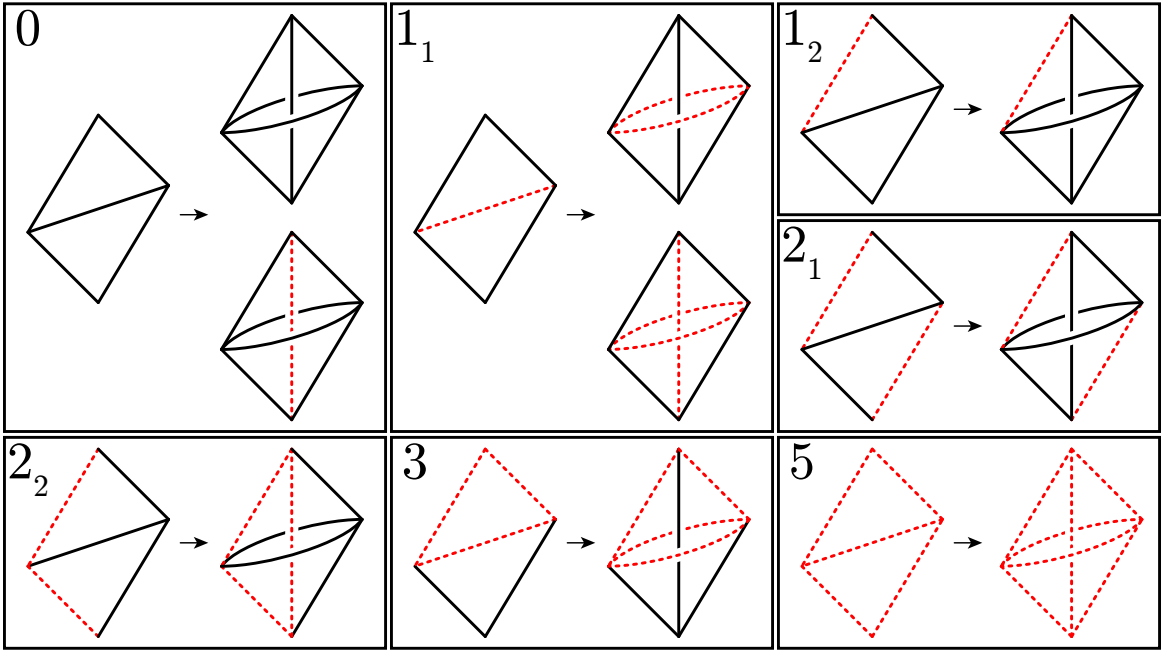}
\caption{The seven possible configurations (up to symmetry) of edges in $\mathcal{E}_0$ or $\mathcal{E}_+$ for two triangles that meet at an edge, and the 9 possible configurations after inserting a pillow. Edges in $\mathcal{E}_0$ are shown with dotted lines. The numerical labels refer to the number of edges of the two triangles that are in $\mathcal{E}_0$, and where there is more than one configuration with that many we have assigned subscripts to the labels arbitrarily. For each of the five configurations  $1_2,2_1,2_2,3$ and $5$ there is only one possibility for whether the added edge is in $\mathcal{E}_0$ or $\mathcal{E}_+$ that avoids bad loops. For the two configurations $0$ and $1_1$ the local picture does not force this, although edges outside of the two triangles may do so.}
\label{pillows_E0.pdf}
\end{figure}

\begin{lemma}\label{child_exists}
Suppose for a triangulation $\mathcal{T}$ we have subset $\mathcal{E}_0$ with no bad loops. Let $\mathcal{T}'$ be the triangulation obtained from $\mathcal{T}$ by inserting a pillow, so $\mathcal{E}'=(\mathcal{E}\setminus\{e\})\cup\{e^\uparrow,e^\downarrow,f\}$. Let $\mathcal{E}''_0 = (\mathcal{E}_0\setminus\{e\}) \cup \{e^\uparrow, e^\downarrow\}$ if $e\in\mathcal{E}_0$, and $\mathcal{E}''_0 = \mathcal{E}_0$ if not. Then at least one of the subsets $\mathcal{E}'_0 = \mathcal{E}''_0$ or $\mathcal{E}'_0 = \mathcal{E}''_0 \cup \{f\}$ has no bad loops in $\mathcal{E}'$.  
\end{lemma}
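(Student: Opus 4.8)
The plan is to argue by contradiction: suppose that \emph{both} candidate choices $\mathcal{E}'_0 = \mathcal{E}''_0$ and $\mathcal{E}'_0 = \mathcal{E}''_0 \cup \{f\}$ have a bad loop in $\til{\mathcal{E}'}$, and derive a contradiction from the fact that $\mathcal{E}_0$ has no bad loop. First I would set up the correspondence between loops of edges in $\til{\mathcal{E}'}$ and loops of edges in $\til{\mathcal{E}}$. Since the only edges of $\mathcal{E}'$ not present in $\mathcal{E}$ are $e^\uparrow, e^\downarrow$ (the splits of $e$) and $f$, and since a loop passing through the pillow region either traverses one of the splits $e^\uparrow/e^\downarrow$ (which projects to traversing $e$) or traverses $f$, every loop $\gamma'$ in $\til{\mathcal{E}'}$ projects to a loop $\gamma$ in $\til{\mathcal{E}}$ obtained by deleting any occurrences of $f$ and identifying $e^\uparrow, e^\downarrow$ with $e$ — except that a loop using $f$ does not directly project, so it needs separate treatment using figure \ref{pillows_E0.pdf} and the local structure of the inserted pillow. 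The key observation is that $f$ bounds a triangle in each of the two triangles involved, so a loop through $f$ can be rerouted around $f$ within the pillow, at the cost of passing through some of the edges $e^\uparrow, e^\downarrow$ and the original edges of those triangles.

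The main steps, in order. (1) Record that by definition \ref{bad_loop}, a bad loop for $\mathcal{E}'_0$ has exactly one edge in $\til{\mathcal{E}'}_+$ and all others in $\til{\mathcal{E}'}_0$. (2) Observe that $e^\uparrow$ and $e^\downarrow$ are always both in $\til{\mathcal{E}'}_0$ or both in $\til{\mathcal{E}'}_+$ (to avoid a bad loop around the obvious bigon/triangle loop through the two splits and possibly $f$ — this is exactly the parenthetical remark before the lemma), and that by construction they sit in the same class as $e$ did, so the only genuine new freedom is whether $f \in \mathcal{E}'_0$. (3) Suppose $\gamma'$ is a bad loop when $\mathcal{E}'_0 = \mathcal{E}''_0$ (so $f \in \mathcal{E}'_+$). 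If $\gamma'$ does not use $f$, then projecting to $\til{\mathcal{E}}$ gives a bad loop for $\mathcal{E}_0$, contradiction. So $\gamma'$ uses $f$; since $f \in \til{\mathcal{E}'}_+$ and a bad loop has exactly one positive edge, $f$ is \emph{the} positive edge of $\gamma'$ and every other edge of $\gamma'$, in particular the relevant splits of $e$, lies in $\til{\mathcal{E}'}_0$ — so $e \in \mathcal{E}_0$. (4) Symmetrically, suppose $\gamma''$ is a bad loop when $\mathcal{E}'_0 = \mathcal{E}''_0 \cup \{f\}$ (so $f \in \mathcal{E}'_0$). If $\gamma''$ does not use $f$ we again project and contradict. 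If it does use $f$, then $f$ is one of the many $\til{\mathcal{E}'}_0$-edges and $\gamma''$ has a unique positive edge $e_1$ elsewhere. In this case I would reroute $\gamma''$ to avoid $f$: replace the step across $f$ by a detour through the pillow. Looking at figure \ref{pillows_E0.pdf}, with $f \in \mathcal{E}'_0$ the configuration of each of the two triangles is forced — both triangles containing $f$ must be of type 3 (all three edges in $\mathcal{E}'_0$), because $f$ together with the two splits of an original edge and one side each lies in those triangles, and two type-0 edges would force a bad loop locally. Hence the detour around $f$ uses only $\til{\mathcal{E}'}_0$-edges, producing a bad loop $\til\gamma''$ avoiding $f$ with the same unique positive edge $e_1$; project to $\til{\mathcal{E}}$ for a contradiction. (5) It remains to handle the remaining case: $\mathcal{E}''_0$ produces a bad loop using $f$ (so $e \in \mathcal{E}_0$, from step (3)) \emph{and} $\mathcal{E}''_0 \cup \{f\}$ produces a bad loop. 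If the latter doesn't use $f$ we're done as in (4); if it does, the detour argument of (4) applies provided the triangle configurations are as claimed — but when $e \in \mathcal{E}_0$, so both splits $e^\uparrow, e^\downarrow \in \mathcal{E}''_0$, the two triangles through $f$ each have two sides in $\mathcal{E}_0'$ (the split and — by the no-bad-loop hypothesis on the original configuration, which forces the third side of each original triangle) hence are type 3 once we also put $f$ in, again making the detour legal. So in every case we contradict the absence of bad loops in $\mathcal{E}_0$.

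The main obstacle I expect is step (4)/(5): showing that a bad loop through $f$ can always be rerouted around $f$ staying inside $\til{\mathcal{E}'}_0$, i.e. that the two triangles sharing $f$ are both of type 3 whenever the candidate $\mathcal{E}'_0$ containing $f$ is forced to be considered. This is really a careful case analysis against the thirteen-or-so local configurations in figure \ref{pillows_E0.pdf}, using in each row that the original pair of triangles had no bad loop (which constrains which of the seven pre-insertion configurations can occur) together with the constraint that the two splits of $e$ lie in the same class. A clean way to organize this is to note that $f$ is an edge of exactly one tetrahedron on each side (the two tetrahedra of the inserted pillow), and in a pillow, $f$ together with $e^\uparrow$ or $e^\downarrow$ spans a face; so whenever $f$ and one split are both in $\mathcal{E}'_0$, the third edge of that face must be too, by the no-bad-loop requirement applied inside the pillow itself. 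Iterating this around the pillow forces the needed type-3 faces, and the detour is then visibly a path of $\til{\mathcal{E}'}_0$-edges. Once that local claim is in hand, the projection-to-$\til{\mathcal{E}}$ argument is routine, and the lemma follows.
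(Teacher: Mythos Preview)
Your overall framework---assume both choices have bad loops and derive a bad loop in the original $\mathcal{E}_0$---is the same as the paper's, but your execution goes astray in steps (4)--(5), and you miss the simple idea that makes the whole thing a one-paragraph argument.

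The gap is in your rerouting argument. The two triangles of the pillow that contain $f$ are $avw$ and $bvw$, where $a,b$ are the endpoints of $e$ and $v,w$ the opposite vertices of the two original triangles. These faces contain \emph{neither} $e^\uparrow$ nor $e^\downarrow$; their other two edges are $av,aw$ (resp.\ $bv,bw$), which are edges of the original triangulation. So your claim that ``the two triangles through $f$ each have two sides in $\mathcal{E}'_0$ (the split and \ldots)'' is based on a misidentification of the faces, and your ``clean way'' at the end---using that $f$ shares a face with a split of $e$---is simply false. Moreover, the conclusion ``$e\in\mathcal{E}_0$'' in step (3) is unjustified: the path $\gamma'\setminus\{f\}$ from $v$ to $w$ need not pass through any split of $e$ at all. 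In configuration $0$ of figure \ref{pillows_E0.pdf}, for instance, none of $av,aw,bv,bw,e$ lie in $\mathcal{E}_0$, the faces $avw,bvw$ are type 21 with $f\in\mathcal{E}'_0$, and your detour through the pillow introduces two extra positive edges rather than zero.

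The paper's trick avoids all of this: it never reroutes inside the pillow. If both choices yield bad loops through $f$, remove $f$ from each to get two paths from $v$ to $w$ in $\til{\mathcal{E}}'\setminus\{f\}$---one entirely in $\til{\mathcal{E}}''_0$ (from the $f\in\mathcal{E}'_+$ case) and one with exactly one edge outside $\til{\mathcal{E}}''_0$ (from the $f\in\mathcal{E}'_0$ case). Concatenating these two paths gives a closed walk avoiding $f$ with exactly one positive edge; replacing any $e^\uparrow,e^\downarrow$ by $e$ yields a bad loop for $\mathcal{E}_0$. In other words, the ``detour'' around $f$ that you need is supplied not by the pillow but by the \emph{other} bad loop.
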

\begin{proof}
Assume for contradiction that both choices for $f$ would result in a bad loop. First, $f$ being positive-length gives a bad loop. This means that for each lift $\til{f}_\lambda$ of $f$ to $\til{\mathcal{T}}$, there is a path $\delta_\lambda$ of zero-length edges in $\til{\mathcal{E}'\setminus\{f\}}$ connecting the two endpoints of $\til{f}_\lambda$. We also have that $f$ being zero-length gives a bad loop $\gamma$. Replace each lift $\til{f}_\lambda$ of $f$ in $\gamma$ with the corresponding path $\delta_\lambda$. Now replace 
any splits $e^\uparrow$ or $e^\downarrow$ in $\gamma$ with $e$ and we get a bad loop for the subset of zero-length edges $\mathcal{E}_0 \subset \mathcal{E}$, contradicting the hypothesis.
\end{proof}
\begin{defn}
Because of Lemma \ref{child_exists}, given a horo-normal surface relative to a triangulation $\mathcal{T}$, there are either one or two corresponding horo-normal surfaces relative to the triangulation $\mathcal{T}'$ where $\mathcal{T}'$ is obtained from $\mathcal{T}$ by inserting a pillow. We call such a derived horo-normal surface a {\bf child} of the original surface, and the original surface the {\bf parent} of the derived surface. If we go on to insert further pillows, we call the children of children (with any number of generations) of a surface the {\bf descendants} of that surface. We extend these definitions to the subsets $\mathcal{E}_0$ with no bad loops corresponding to the horo-normal surfaces in the obvious way. Likewise, the set of regions of $\til{R_\text{in}}$ (in the complement of $S$) has either one or two descendant sets of regions in the complements of the descendants of $S$.
\end{defn}
\begin{lemma}
Suppose that $\mathcal{T}'$ is obtained from $\mathcal{T}$ by inserting a pillow and that $S$ is a horo-normal surface relative to $\mathcal{T}'$. Then $S$ has exactly one parent.
\end{lemma}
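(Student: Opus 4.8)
The plan is to reverse the pillow-insertion construction. A horo-normal surface $S$ relative to $\mathcal{T}'$ corresponds (by the one-to-one correspondence of Definition \ref{E_0_to_horo-normal}) to a choice $\mathcal{E}'_0 \subset \mathcal{E}'$ with no bad loops, where $\mathcal{E}' = (\mathcal{E}\setminus\{e\})\cup\{e^\uparrow,e^\downarrow,f\}$. To exhibit a parent, I would first observe that, since $\mathcal{E}'_0$ has no bad loops, the two splits $e^\uparrow$ and $e^\downarrow$ are either both in $\mathcal{E}'_0$ or both in $\mathcal{E}'_+$ (this was already noted in the text preceding Lemma \ref{child_exists}: a path through the pillow connecting the endpoints of $e^\uparrow$ and $e^\downarrow$ via the other tetrahedron edge would otherwise force a bad loop, or one is forced by the local pillow combinatorics of figure \ref{pillows_E0.pdf}). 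Define $\mathcal{E}_0 \subset \mathcal{E}$ by: put $e \in \mathcal{E}_0$ if $e^\uparrow, e^\downarrow \in \mathcal{E}'_0$, put $e \in \mathcal{E}_+$ otherwise, and for every edge of $\mathcal{E}$ other than $e$ (which is literally also an edge of $\mathcal{E}'$, the edge $f$ being deleted) keep its membership unchanged. This is the only candidate for a parent, since a parent's $\mathcal{E}_0$ must agree with $\mathcal{E}'_0$ away from the pillow and must have its splits matching $\mathcal{E}'_0$ on $\{e^\uparrow,e^\downarrow\}$; hence uniqueness is immediate once we show this $\mathcal{E}_0$ works.

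Next I would verify that $\mathcal{E}_0$ has no bad loops. Suppose for contradiction there is a bad loop $\gamma$ in $\til{\mathcal{E}}$ for $\mathcal{E}_0$, i.e.\ a loop of edges with exactly one in $\til{\mathcal{E}}_+$ and all others in $\til{\mathcal{E}}_0$. Lift the construction to the universal cover: a pillow insertion lifts to inserting a pillow at each lift of the pair of triangles, so $\til{\mathcal{T}}'$ is obtained from $\til{\mathcal{T}}$ by such insertions, and $\til{\mathcal{E}}'_0$ has no bad loops by hypothesis. Each occurrence of a lift $\til e$ of $e$ in $\gamma$ can be replaced by a path through the corresponding pillow: if $\til e \in \til{\mathcal{E}}_0$ then $\til e^\uparrow, \til e^\downarrow \in \til{\mathcal{E}}'_0$ and we route through either one of them (an edge in $\til{\mathcal{E}}'_0$, preserving the count), while if $\til e \in \til{\mathcal{E}}_+$ we route through $\til e^\uparrow$, say, which lies in $\til{\mathcal{E}}'_+$ (again preserving the count). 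This produces a loop $\gamma'$ in $\til{\mathcal{E}}'$ with exactly one edge in $\til{\mathcal{E}}'_+$ and the rest in $\til{\mathcal{E}}'_0$ — a bad loop for $\mathcal{E}'_0$, contradiction. (One should check the routing genuinely closes up into a loop and that, when $\til e \in \til{\mathcal{E}}_+$, choosing $\til e^\uparrow$ rather than $\til e^\downarrow$ does not accidentally create extra $\til{\mathcal{E}}'_+$ edges; it does not, since only one split is traversed.)

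Finally, I would confirm that the $\mathcal{E}_0$ so produced, together with the remaining insertions undone one at a time, recovers a genuine horo-normal surface relative to $\mathcal{T}$ whose child (in the sense of the definition following Lemma \ref{child_exists}) is $S$: by construction $\mathcal{E}_0$ is compatible with $\mathcal{E}'_0$ in the required sense, so $\mathcal{E}'_0$ is one of the one or two children of $\mathcal{E}_0$, hence $S$ is a child of the horo-normal surface determined by $\mathcal{E}_0$. The main obstacle I anticipate is the bookkeeping in the replacement argument of the previous paragraph — making sure every case of how $e^\uparrow, e^\downarrow$ sit relative to $\mathcal{E}'_0$ is handled and that the resulting $\gamma'$ is a legitimate bad loop rather than, say, a path that backtracks or a loop with two $\til{\mathcal{E}}'_+$ edges; figure \ref{pillows_E0.pdf} enumerates exactly the local configurations needed to see this cannot happen.
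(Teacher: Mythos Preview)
Your proposal is correct and follows essentially the same approach as the paper: define $\mathcal{E}_0$ by collapsing $e^\uparrow,e^\downarrow$ to $e$ and deleting $f$, argue that this has no bad loops, and observe that compatibility forces uniqueness. The paper's own proof is terser on the no-bad-loops step (it simply asserts that collapsing and removing cannot create one), whereas you spell out the replacement-of-edges argument explicitly; otherwise the two proofs coincide.
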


\begin{proof}
Let $\mathcal{E}'_0$ be the subset corresponding to $S$, and let $e^\uparrow, e^\downarrow,f$ be the edges added by inserting the pillow. The subset $\mathcal{E}'_0$ has no bad loops, and collapsing $e^\uparrow,e^\downarrow$ to $e$ and removing $f$ cannot create a bad loop, so taking $\mathcal{E}_0 =(\mathcal{E}'_0 \setminus \{e^\uparrow, e^\downarrow,f\})\cup\{e\}$ if $e^\uparrow,e^\downarrow\in\mathcal{E}'_0$, or $\mathcal{E}_0 =\mathcal{E}'_0 \setminus \{f\}$ if not produces a parent of $S$. Any parent of the subset $\mathcal{E}'_0$ must agree with $\mathcal{E}'_0$ everywhere but at $f$, and contains $e$ if and only if $\mathcal{E}'_0$ contains $e^\uparrow$ and $e^\downarrow$, so the parent is unique.
\end{proof}
Given a sequence of pillow insertions starting from a triangulation $\mathcal{T}$, the horo-normal surfaces of the various triangulations thus form a graded forest (a disjoint union of trees, one tree for each horo-normal surface relative to $\mathcal{T}$), with all roots of trees at the level of $\mathcal{T}$ and a non-decreasing number of nodes at successive levels. 

\begin{lemma}\label{insert_pillow_improves}
Suppose that $S$ is a surface in horo-normal form relative to a triangulation $\mathcal{T}$. Suppose that we insert a pillow, changing $\mathcal{T}$ to $\mathcal{T}'$, and let $S'$ be a descendant of $S$ in $\mathcal{T}'$. Let $\til{R_\text{in}}$ and $\til{R_\text{in}}'$ be the inside regions relative to $S$ and $S'$ respectively. Then there is a natural surjective map from the set of components of $\til{R_\text{in}}$ to the set of components of $\til{R_\text{in}}'$.
\end{lemma}
\begin{proof}
First note that we can see regions of $\til{R_\text{in}}$ from diagrams of the edges of tetrahedra marked as being in $\til{\mathcal{E}}_0$ or  $\til{\mathcal{E}}_+$ by looking at the midpoints of edges in $\til{\mathcal{E}}_+$. Observe that two midpoints are part of the same component of $\til{R_\text{in}}$ if and only if they are connected by a path of midpoints of edges in $\til{\mathcal{E}}_+$ where neighbouring midpoints are midpoints of edges of triangles of $\til{\mathcal{T}}$ (see Figure \ref{5_tetra_types_w_graphs.pdf}). Now consider the possible moves in Figure \ref{pillows_E0.pdf}. In all possible configurations of pairs of triangles and pillows only the pair of triangles in configuration $1_1$ has the midpoints of edges in $\til{\mathcal{E}}_+$ corresponding to potentially distinct components of $\til{R_\text{in}}$. Those potentially distinct components get connected together when we insert the pillow, no matter which choice we make for the added edge. Thus connectivity can only increase for the descendants of a horo-normal surface. \\

The above argument means that it makes sense to view a component of $\til{R_\text{in}}$ as having a descendant component of $\til{R_\text{in}}'$. A descendant component may merge with the descendant of some other component, but no component can have multiple descendants.
\end{proof}

\begin{rmk}\label{splits_connected}
Notice that the midpoints of splits of an edge in $\til{\mathcal{E}}_+$ are always connected to each other, and so this will be true for all splits of that edge produced by further pillow insertions. 
\end{rmk}

\begin{lemma}\label{fixes_stick}
Suppose that $S$ is a surface in horo-normal form relative to a triangulation $\mathcal{T}$ which is porous. Then all descendants of $S$ are also porous.
\end{lemma}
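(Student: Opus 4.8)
A descendant of $S$ arises from it by a finite sequence of pillow insertions, taking a child at each stage, so the plan is to induct on the number of insertions and prove only the one-step statement: if $S$ is omnipresent relative to $\mathcal{T}$ and $S'$ is a child of $S$ relative to $\mathcal{T}'$ (where $\mathcal{T}'$ is $\mathcal{T}$ with a single pillow inserted across triangles $\tri_1,\tri_2$ that share an edge $e$), then $S'$ is omnipresent. The tool I would use throughout is the description behind lemma \ref{child_exists}: writing $\mathcal{E}''_0=(\mathcal{E}_0\setminus\{e\})\cup\{e^\uparrow,e^\downarrow\}$ when $e\in\mathcal{E}_0$ and $\mathcal{E}''_0=\mathcal{E}_0$ otherwise, the child satisfies $\mathcal{E}''_0\subseteq\mathcal{E}'_0\subseteq\mathcal{E}''_0\cup\{f\}$, where $f$ is the new edge.

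\emph{Condition $2$.} Inserting a pillow changes neither the vertex set nor $M$, hence not the boundary tori. Given a boundary component $T$ and an edge $e_T\in\mathcal{E}_+$ with an endpoint on $T$ (such an $e_T$ exists by omnipresence of $S$): if $e_T\neq e$, it survives into $\mathcal{E}'$, and since it is not $f$, not a split of $e$, and not in $\mathcal{E}_0$, the inclusions above place it in $\mathcal{E}'_+$ with the same endpoints; if $e_T=e$, then $e\notin\mathcal{E}_0$, so $\mathcal{E}''_0=\mathcal{E}_0$ and both splits $e^\uparrow,e^\downarrow$ lie in $\mathcal{E}'_+$, sharing $e$'s endpoints. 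Either way $S'$ inherits condition $2$; this step is pure bookkeeping.

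\emph{Condition $1$.} This is the heart of the matter. The plan is to show that, up to the obvious homeomorphism of $M$, passing to a child replaces $S$ by an isotopic surface. The union of the two new tetrahedra is a ball $B$ (with $f$ interior to it), and the inverse of the pillow insertion is a homeomorphism $h\colon M'\to M$ that collapses $B$ onto $Q=\tri_1\cup\tri_2$ and is the identity outside a bicollar $N\cong Q\times[-1,1]$ of $Q$; the folded-pillow case is analogous. Outside $N$ the two tetrahedralisations agree up to renaming $e$ to a split and carry the same normal pieces, so $h(S')=S$ there. The one thing to check is that the normal quadrilaterals and triangles the child places in the two tetrahedra of $B$ glue, along the two interior faces of $B$, into a product region over $S\cap Q$ --- equivalently, $h(S')\cap N$ is isotopic rel boundary to $(S\cap Q)\times[-1,1]$. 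I expect this to be a finite verification over the seven configurations of edges of $\{\tri_1,\tri_2\}$ in $\mathcal{E}_0$ drawn in figure \ref{pillows_E0.pdf}: on each interior face of $B$ the normal arcs induced from the two sides coincide (being determined by that face's edge statuses, which match on both sides), and the status of $f$ that lemma \ref{child_exists} forces --- or, in configurations $0$ and $1_1$, leaves free --- is exactly the one making the two halves close up into a bicollar. Granting this, $h$ carries $R'_{\text{in}}$ to a region isotopic to $R_{\text{in}}$; lifting $h$ equivariantly, $\til{R'_{\text{in}}}$ is isotopic to $\til{R_{\text{in}}}$, which is connected by omnipresence of $S$. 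Hence $S'$ satisfies condition $1$, and the induction is complete.

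The main obstacle is exactly that product-region check inside the pillow; everything else reduces to the inclusion $\mathcal{E}''_0\subseteq\mathcal{E}'_0\subseteq\mathcal{E}''_0\cup\{f\}$. An alternative for condition $1$ would be to derive connectedness of $\til{R'_{\text{in}}}$ directly from that of $\til{R_{\text{in}}}$: collapsing $B$ gives a proper surjection $\til{R'_{\text{in}}}\to\til{R_{\text{in}}}$ whose point-preimages are points or arcs, hence a quotient map with connected fibres, and such a map has connected domain whenever its target is connected. But this reformulation still needs one to understand how $S'$ sits inside the pillow, so it does not sidestep the case analysis; I would treat the isotopy version as the cleaner route, with the figure \ref{pillows_E0.pdf} case check as the one genuinely substantive ingredient.
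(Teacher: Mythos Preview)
Your treatment of condition~2 is fine and matches the paper's. The problem is condition~1: the isotopy claim that underpins your argument is false.

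Take configuration $1_1$, where the shared edge $e$ is the sole edge in $\mathcal{E}_0$. Then $S\cap Q$ consists of two parallel arcs in each of $\tri_1,\tri_2$, so in a bicollar $N$ of $Q$ the surface $S$ meets $N$ in four discs. For the child with $f\in\mathcal{E}'_+$ the two pillow tetrahedra are of type $211$; computing the normal pieces and gluing along the two interior faces, $S'\cap B$ is one annulus (the two quadrilaterals glued) together with two discs (triangle pairs near $C$ and near $D$). Hence $\chi(S')=\chi(S)-2$: the child is obtained from $S$ by attaching a tube joining the strip in $\tri_1$ to the strip in $\tri_2$, and is not isotopic to $S$. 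The other child ($f\in\mathcal{E}'_0$, tetrahedra of type $22$) is worse: $S'\cap B$ is two annuli and $\chi$ drops by $4$. The same phenomenon already occurs in configuration $0$ with $f\in\mathcal{E}'_0$: the two type~$211$ tetrahedra produce an annulus wrapping $f$ rather than product discs. So the ``product region over $S\cap Q$'' check you defer to a case analysis does not go through; the case analysis refutes it.

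Your alternative via the collapse also breaks at exactly this point: the quotient $c:\til{M}'\to\til{M}$ does \emph{not} carry $\til{R'_{\text{in}}}$ into $\til{R_{\text{in}}}$, because in configuration $1_1$ the new tube in $\til{R'_{\text{in}}}$ projects across the $\til{R_{\text{out}}}$ strip around $e$ in $Q$. So there is no map $\til{R'_{\text{in}}}\to\til{R_{\text{in}}}$ to which the connected-fibre argument applies.

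The paper's proof avoids isotopy entirely. It uses a combinatorial criterion for components of $\til{R_{\text{in}}}$ --- midpoints of edges in $\til{\mathcal{E}}_+$ lie in the same component iff they are joined by a path of such midpoints through faces of $\til{\mathcal{T}}$ --- and then runs through the configurations of figure~\ref{pillows_E0.pdf}. In every configuration except $1_1$ the midpoints already lie in one local component before the insertion; in $1_1$ they may lie in two, and the pillow joins them. The upshot is that connectivity can only \emph{increase} under pillow insertion, which is exactly what the tube in your counterexample exhibits. That monotonicity, not an isotopy, is the mechanism.
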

\begin{proof}
Let $S'$ be a descendant of $S$. Condition 1 of porousity (Definition \ref{xdv}) follows directly from Lemma \ref{insert_pillow_improves}. Condition 2 is clear since inserting a pillow only removes edges when replacing them with splits (which have the same endpoints and are in $\mathcal{E}_+$ if the original edge is). Therefore the subset $\mathcal{E}_+'$  corresponding to $S'$ contains an edge or a split of it if the subset $\mathcal{E}_+$ corresponding to $S$ does. 
\end{proof}

The results so far tell us that the pillow insertion move can never introduce truly new horo-normal surfaces, only descendants of surfaces we started with. We also know that the move cannot make a horo-normal surface ``less porous'', and can only improve matters. Next, we will need a structure that we can use to organise a sequence of pillow insertion moves, in order to make the descendant horo-normal surfaces porous.

\begin{defn}\label{strip_of_tris}
A {\bf strip of triangles} in $\mathcal{T}$ (or $\til{\mathcal{T}}$) is a sequence of triangular faces $(\tri_{1}, \tri_{2},\ldots,\tri_{n})$ of $\mathcal{T}$ (or $\til{\mathcal{T}}$) alternating with a sequence of edges $(e_1,e_2,\ldots,e_{n-1})$ such that neighbouring triangles $\tri_{k}, \tri_{k+1}$ share the edge $e_{k}$ (the {\bf internal\footnote{As in, internal to the strip.} edge} between triangles). For each $\tri_{k}$, $k = 2,3,\ldots, n-1$, the internal edges either side, $e_{k-1}$ and $e_{k}$, must be distinct. We allow repeated triangles and edges in the strip, even consecutive repeated triangles.\\

 We will sometimes write a strip as $(\tri_{1},e_1, \tri_{2},e_2,\ldots,e_{n-1},\tri_{n})$. Compare with Definition \ref{chain_of_triangles}.
\end{defn}
 
\begin{defn}
If $\mathcal{T}$ is an ideal triangulation of a manifold $M$, we construct a corresponding {\bf handle decomposition} $\mathcal{T}^\odot$ of $M$ by ``thickening up'' the edges and triangles of $\mathcal{T}$. Each edge $e$ is replaced by a polygonal tubular neighbourhood (or {\bf thick edge}) $e^\odot$, where the number of sides of the polygon is equal to the number of triangles incident to the edge. Each triangle $\tri$ is replaced by a triangular prism (or {\bf thick triangle}) $\tri^\odot$ with the rectangular faces coinciding with the rectangular faces of the thick edges. Each tetrahedron $t$ remains combinatorially the same but shrinks a little to become $t^\odot$ so that its faces coincide with the triangular faces of the thick triangles. See Figure \ref{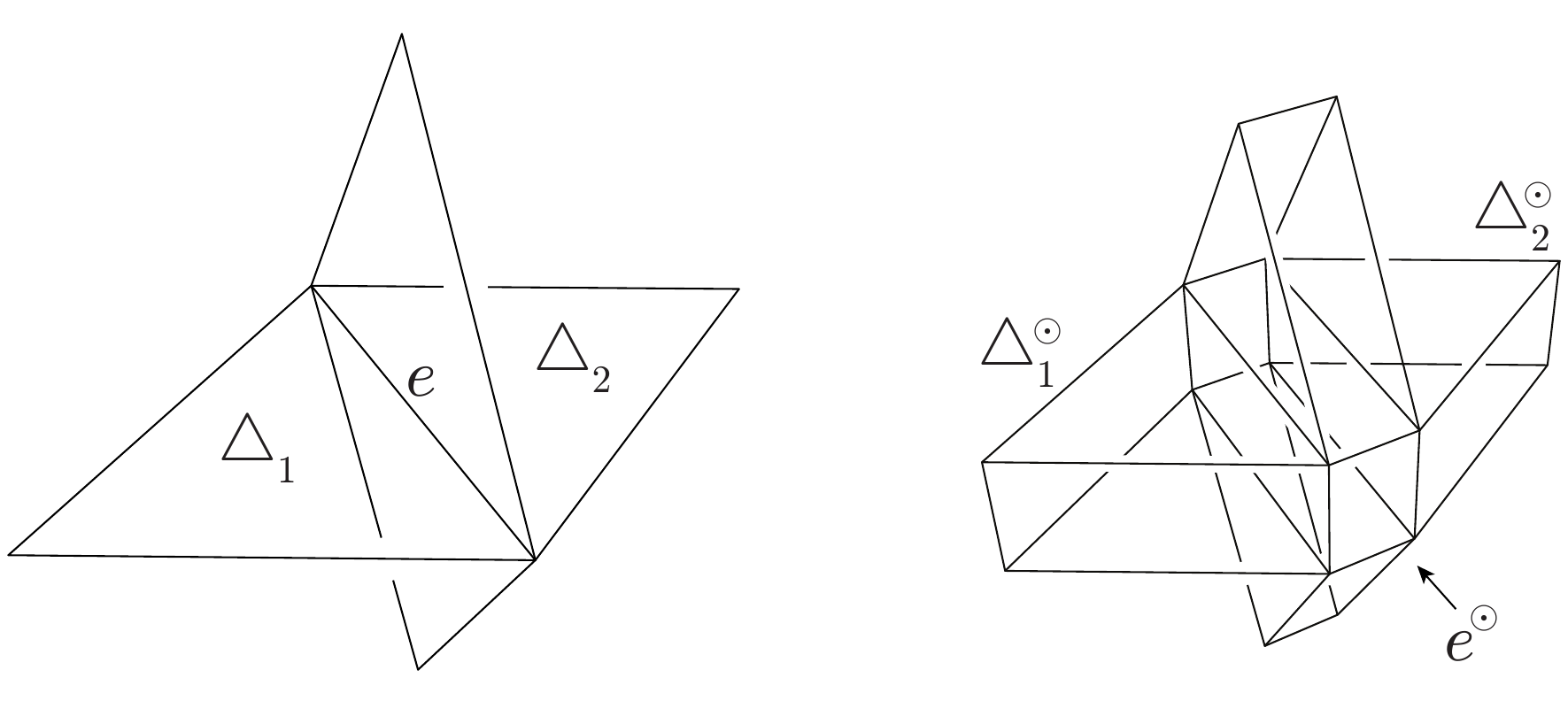}. Inserting a pillow into $\mathcal{T}$ has a corresponding effect on $\mathcal{T}^\odot$ and splits of thick edges and triangles are defined in an analogous way.
\end{defn}

\begin{figure}[htb]
\centering
\includegraphics[width=0.9\textwidth]{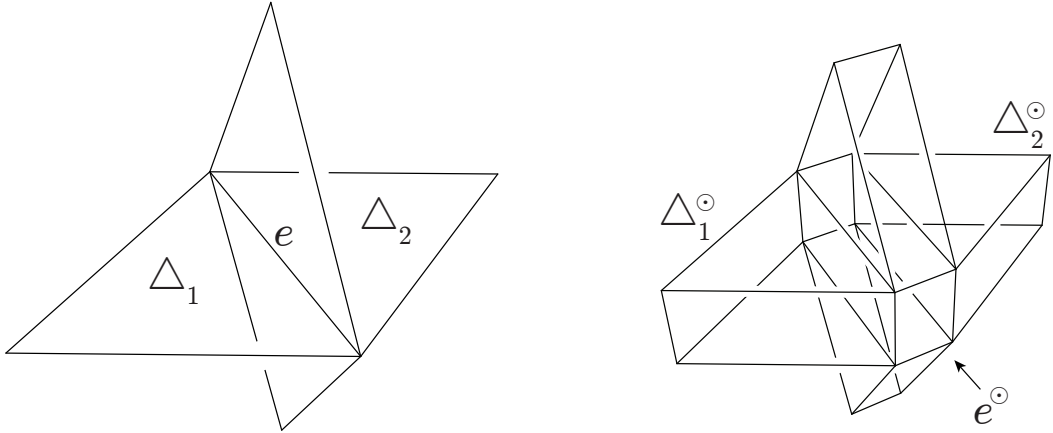}
\caption{Four triangles of $\mathcal{T}$ that meet at an edge and the corresponding four thick triangles of $\mathcal{T}^\odot$ meeting at the corresponding thick edge.}
\label{thick_T.pdf}
\end{figure}

\begin{defn}
A {\bf strip of triangles} in $\mathcal{T}^\odot$ (or $\til{\mathcal{T}}^\odot$) is a sequence of $n$ triangles embedded in and respecting the product structure of the thick triangles, alternating with $n-1$ rectangular pieces  embedded in and respecting the product structure of the thick edges such that the pieces connect together analogously to as in Definition \ref{strip_of_tris}. See Figure \ref{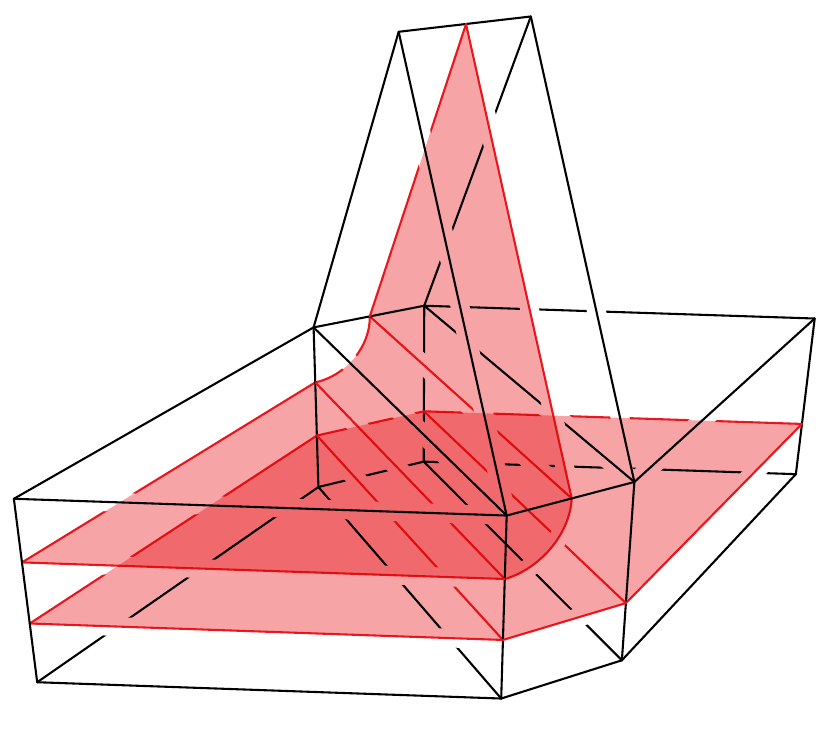}. If we collapse all of the product structures to recover $\mathcal{T}$ then a strip of triangles in $\mathcal{T}^\odot$ becomes a strip of triangles in $\mathcal{T}$.
\end{defn}
We can think of a strip of triangles in $\mathcal{T}^\odot$ as having all of the data of a strip of triangles in $\mathcal{T}$ together with ordering information in the case when the strip passes through a triangle or edge multiple times. We will refer to a strip of triangles $\sigma'$ in $\mathcal{T}^\odot$ that collapses to a  strip of triangles $\sigma$ in $\mathcal{T}$ as an \textbf{ordered version} of $\sigma$.

\begin{figure}[htb]
\centering
\includegraphics[width=0.4\textwidth]{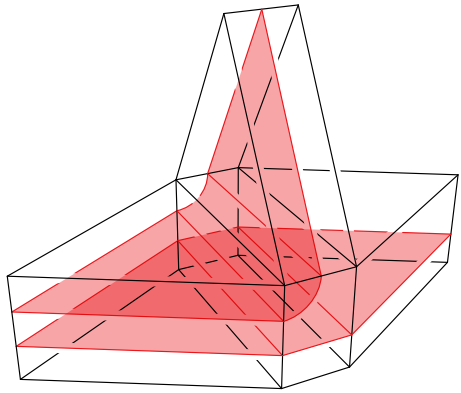}
\caption{Strips embedded in part of $\mathcal{T}^\odot$.}
\label{strip_in_thick.pdf}
\end{figure}

\begin{algor}(Insert pillows along a strip of triangles in $\mathcal{T}^\odot$)
\label{insert pillows}

\indent{\bf Input:} an ideal triangulation $\mathcal{T}$ of a connected 3-manifold $M$, together with a strip of triangles $\sigma'=(\tri'_1,e'_1, \tri'_2,e'_2,\ldots,e'_{n-1},\tri'_{n})$ embedded in $\mathcal{T}^\odot$.\\
\indent{\bf Output:} a triangulation of $M$ obtained from $\mathcal{T}$ by ``inserting pillows along $\sigma'$''.\\

Let $\sigma=(\tri_1,e_1, \tri_2,e_2,\ldots,e_{n-1},\tri_{n})$ be the image of $\sigma'$ after collapsing the handle decomposition  $\mathcal{T}^\odot$ to  $\mathcal{T}$.  Note that the strip may go through a handle of $\mathcal{T}^\odot$ multiple times, and so there may be repeated triangles and edges in this list.\\

First we insert a pillow between $\tri_{1}$ and $\tri_{2}$ across $e_1$. This changes the triangulation $\mathcal{T}$ to a new triangulation which we will call $\mathcal{T}_1$. See Figures \ref{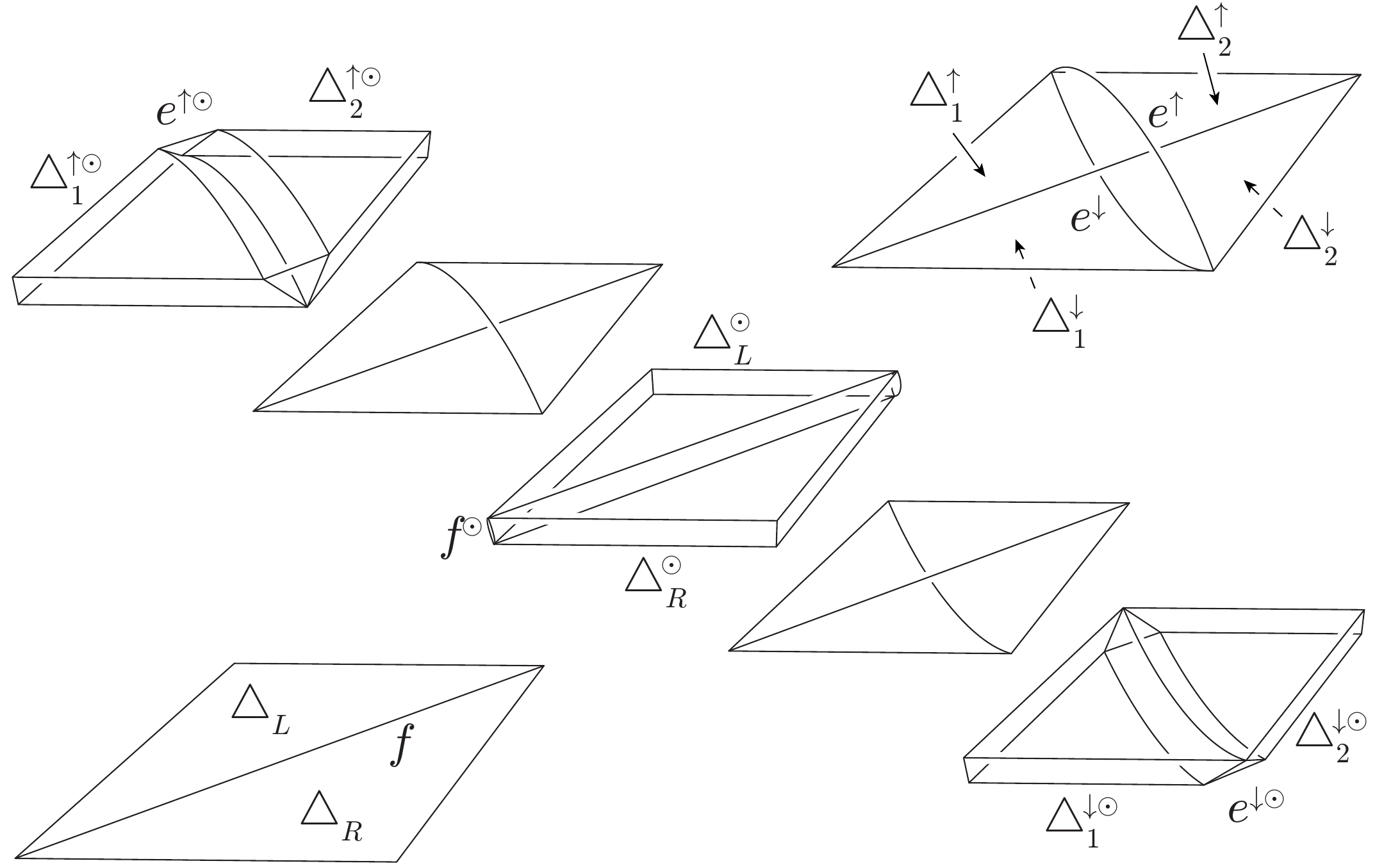} and \ref{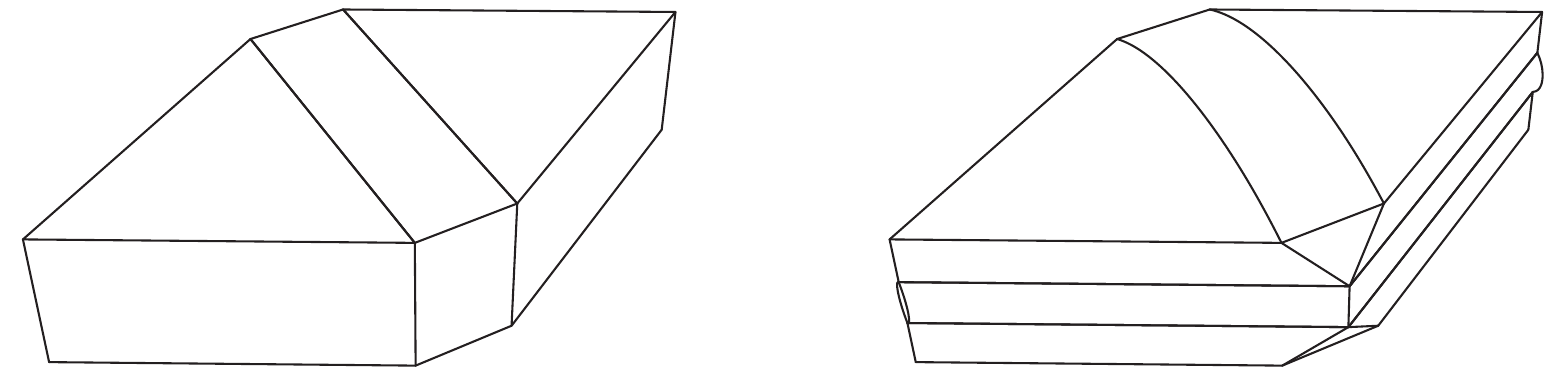} for the handle decomposition of a thick pillow and how the handle decomposition changes under a pillow insertion. The handle decomposition $\mathcal{T}^\odot_1$ corresponding to $\mathcal{T}_1$ inherits an embedded strip of triangles under the pillow insertion move in a natural way: all of the handles of $\mathcal{T}^\odot$ other than $\tri_1^{\odot},e_1^\odot$ and $\tri_{2}^\odot$ remain in $\mathcal{T}^\odot_1$, and they keep the strip parts within them, connected to each other in the same way. The handles $\tri_1^{\odot},e_1^\odot$ and $\tri_{2}^\odot$ are replaced by the handles as shown in Figure \ref{thick_pillow.pdf}. Parts of $\sigma'$ that went through $\tri_1^{\odot},e_1^\odot$ and $\tri_{2}^\odot$ now go through either $\tri_1^{\uparrow\odot},e_1^{\uparrow\odot}$ and $\tri_{2}^{\uparrow\odot}$ or  $\tri_1^{\downarrow\odot},e_1^{\downarrow\odot}$ and $\tri_{2}^{\downarrow\odot}$, depending on the original ordering of those parts of $\sigma'$ in  $\tri_1^{\odot},e_1^\odot$ and $\tri_{2}^\odot$ relative to $\tri'_1,e'_1,$ and $\tri'_2$, and connect to other handles in the natural way.  Finally, one of either $\tri_L$ or $\tri_R$ (as in Figure \ref{thick_pillow.pdf}) is incident to $e_2$, and we add a copy of that triangle to the front of the new strip of triangles in $\mathcal{T}^\odot_1$, connecting onto the rectangle $e'_2$. This gives a new strip of triangles, $\sigma'_1$ embedded in $\mathcal{T}^\odot_1$.\\

We now repeat the process, using the first two triangles of the new strip to insert a new pillow, to construct $\mathcal{T}^\odot_2$ with a new embedded strip of triangles $\sigma'_2$, and so on. Finally, the strip contains only two triangles, and after inserting a pillow between these two triangles we are done.\\

See Figure \ref{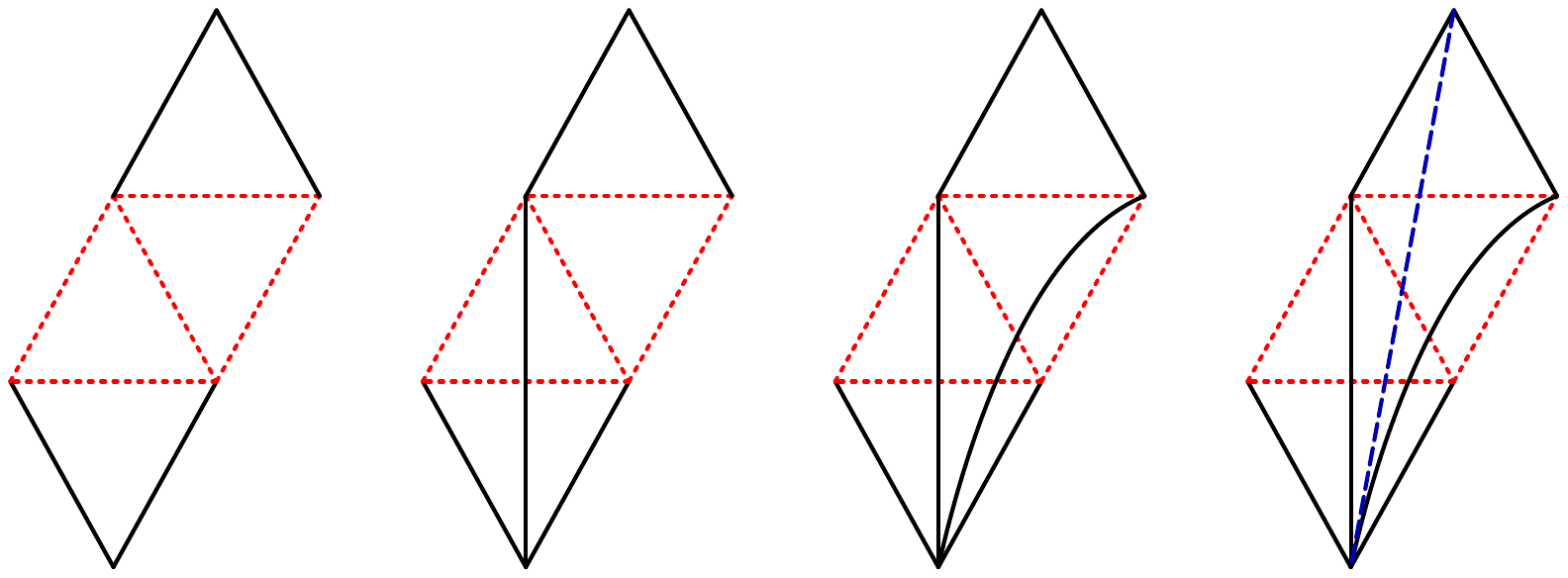} for a schematic picture showing the result of this algorithm.\\

Note that it is possible that the first two triangles in the sequence are in the same thick triangle, with the strip folded over, backtracking immediately. In this case the pillow insertion move is slightly different, opening up only one triangle and inserting a ``folded pillow''. Allowing this modified move does not change any of the arguments in this section.

\end{algor}

\begin{figure}[htb]
\centering
\includegraphics[width=\textwidth]{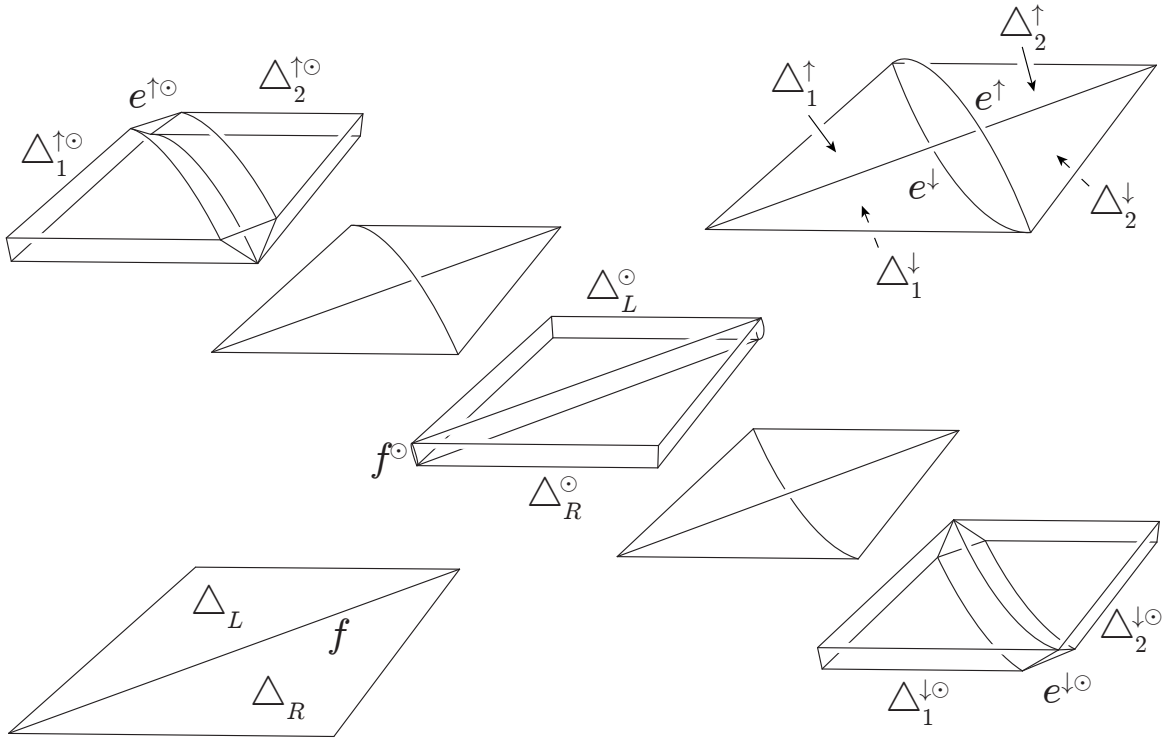}
\caption{Top right: a pillow in $\mathcal{T}$. Bottom left: the triangles inside the pillow, redrawn for clarity. Diagonally from top left to bottom right: the corresponding thick pillow in $\mathcal{T}^\odot$ in exploded view. Top left are two thick triangles and a thick edge, next a tetrahedron, next two thick triangles and a thick edge. The lower half is the mirror image of the upper half.}
\label{thick_pillow.pdf}
\end{figure}

\begin{figure}[htb]
\centering
\includegraphics[width=\textwidth]{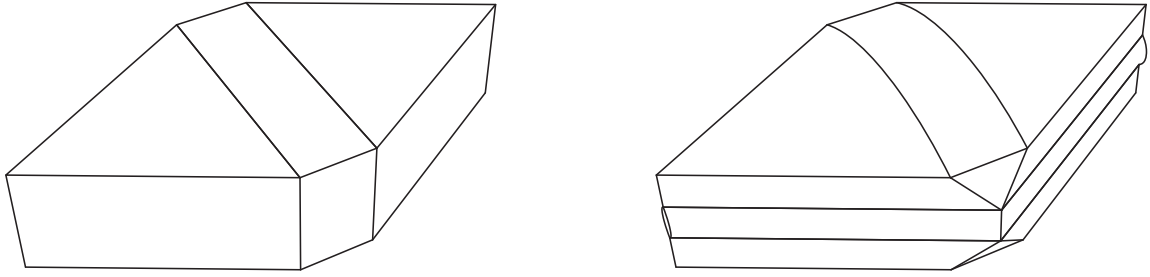}
\caption{Thick versions of a pair of triangles and the corresponding pillow, only the boundary faces are shown. The versions shown here correspond to the situation in Figure \ref{thick_T.pdf}, with one triangle incident to the central edge above and one below. If there are more above then the rectangular face is cut into more strips, in both the pair of triangles and pillow diagrams, and similarly below.} 
\label{thick_pillow_compare.pdf}
\end{figure}

\begin{figure}[htb]
\centering
\includegraphics[width=0.8\textwidth]{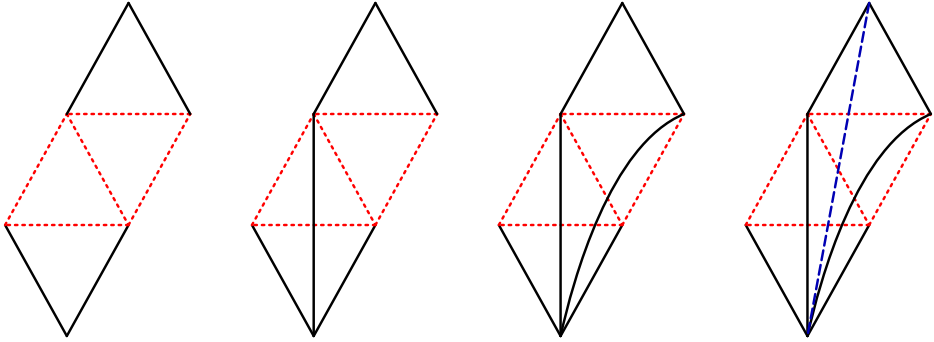}
\caption{Inserting pillows along a strip joining two components of $\til{R_\text{in}}$. Edges in $\til{\mathcal{E}}_+$ are solid, edges in $\til{\mathcal{E}}_0$ are dotted, and the last added edge (which could be in either) is dashed.}
\label{pillows_in_strip.pdf}
\end{figure}

\begin{algor}(Connect two components of $\til{R_\text{in}}$ using a strip of triangles.)
\label{connect two components}

\indent{\bf Input:} a horo-normal surface $S$ relative to an ideal triangulation $\mathcal{T}$ of a connected 3-manifold $M$, and a strip of triangles $\til{\sigma}$ in $\til{\mathcal{T}}$ which projects to a strip of triangles $\sigma = (\tri_1,e_1, \tri_2,e_2,\ldots,e_{n-1},\tri_{n})$ in $\mathcal{T}$ which has an ordered version $\sigma'$ embedded in $\mathcal{T}^\odot$. We also require that $\tri_1$ and $\tri_n$ are of type $21$ and $e_1$ and $e_{n-1}$ are zero-length relative to the horo-normal surface $S$. Let $A$ and $B$ be the components of  $\til{R_\text{in}}$ that contain the midpoints of the positive-length edges of $\tri_1$ and $\tri_n$ respectively.\\
\indent{\bf Output:} a triangulation $\mathcal{T}^*$ of $M$ obtained from $\mathcal{T}$ by a finite number of pillow insertions such that all descendants of $S$ in horo-normal form relative to $\mathcal{T}^*$ each have their associated descendants of the regions $A$ and $B$ merged together into single components.\\

First consider the case that where $\tri_{1}, \tri_{n}$ are type 21 and $\tri_{2}, \ldots, \tri_{n-1}$ are type 3. (This is the case illustrated in Figure \ref{pillows_in_strip.pdf}.)
We insert pillows along the strip $\sigma'$, as in Algorithm \ref{insert pillows}. If $n=2$, and the strip $\sigma$ has only two triangles, then we are in configuration $1_1$ of Figure \ref{pillows_E0.pdf}. When we insert the pillow, the descendants of the previously disconnected components of $\til{R_\text{in}}$ become connected for both possible children of $S$. Otherwise, after the first pillow insertion we are in configuration $3$, the added edge $f$ is positive-length and we have two new triangles which have $f$ as an edge.  Both of these triangles are of type 21, and one of them (the new triangle added to the front of the new strip to form $\sigma'_1$) is arranged together with $\tri_{3}$ in configuration $3$ of Figure \ref{pillows_E0.pdf} (if $n>3$) or configuration $1_1$ (if $n=3$). We repeat, as in Algorithm \ref{insert pillows}, inserting pillows until we reach the same situation as when $n=2$, and the descendants of the previously disconnected components of $\til{R_\text{in}}$ become connected. See Figure \ref{pillows_in_strip.pdf}. Notice that pillow insertions early in the strip may split edges that the strip revisits. However, all of the splits of these edges will be zero-length edges, since the original edges were also zero-length edges. So the the pairs of triangles we insert pillows along are still in configuration $3$, or $1_1$ at the very end.\\

Now suppose that the middle triangles of the strip are not all of type 3. There must be some triangles of type 21, and possibly some of type 111. See Figure \ref{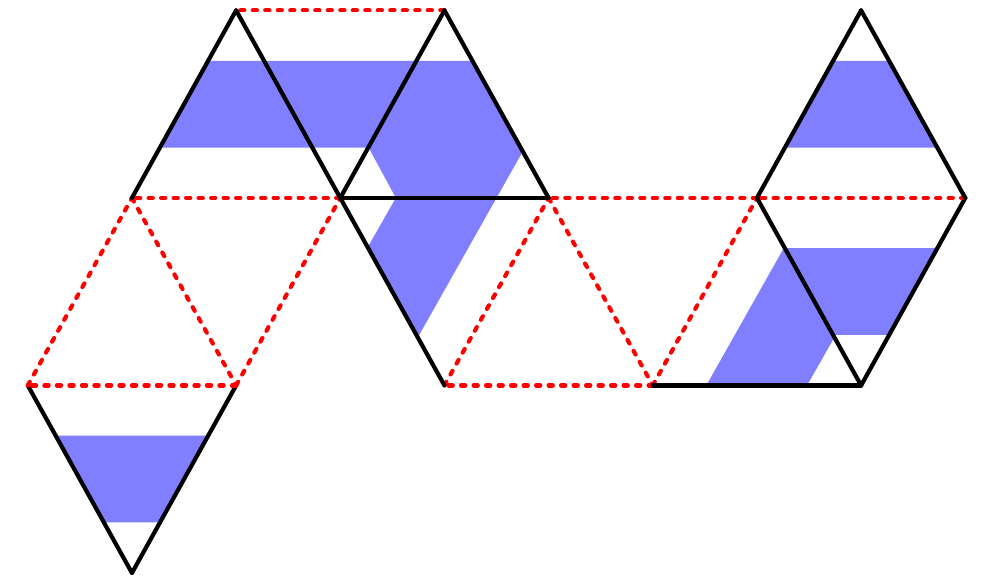}. The strip decomposes into a number of substrips, each of which has triangle types $(21,3,3,\ldots,3,21)$, or has a region of $\til{R_\text{in}}$ running along it. We run the above procedure on each of the substrips with types $(21,3,3,\ldots,3,21)$, and so connect the intermediate regions of $\til{R_\text{in}}$ together. This eventually connects the regions of $\til{R_\text{in}}$ at either end of the strip together.

\begin{figure}[htb]
\centering
\includegraphics[width=0.5\textwidth]{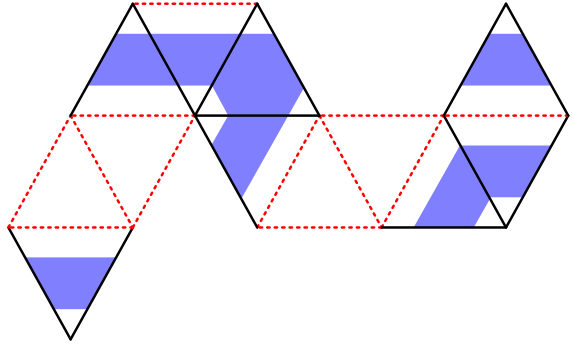}
\caption{A strip of triangles with a horo-normal surface. Edges in $\til{\mathcal{E}}_+$ are solid, edges in $\til{\mathcal{E}}_0$ are dotted. The regions of $\til{R_\text{in}}$ are shaded.}
\label{strip_decomposes.pdf}
\end{figure}

\end{algor}

\begin{rmk} \label{can do condition 2 also}
We will apply Algorithm \ref{connect two components} to make a horo-normal surface (or rather, its descendants) satisfy condition 1 of porousity. A very similar process also works to make a surface satisfy condition 2. If condition 2 fails then a vertex $v \in\mathcal{V}$ has only edges in $\mathcal{E}_0$ incident to it, and so all triangles incident to it are of type 3. All we have to do is insert pillows along a strip from a type 21 triangle through type 3 triangles out to $v$, and this works in exactly the same way as in Algorithm~\ref{connect two components}. 
\end{rmk}

\begin{algor}(Convert a set of strips of triangles in $\mathcal{T}$ into a set of strips of triangles whose union is embedded in $\mathcal{T}^\odot.$)
\label{make strips embedded}

\indent{\bf Input:}  a set of strips of triangles, $\{\sigma_i\}_{i=1}^n$, each in the ideal triangulation $\mathcal{T}$.\\
\indent{\bf Output:} a set of strips of triangles $\{\sigma'_i\}_{i=1}^n$ embedded in $\mathcal{T}^\odot$, where the first and last triangles, and first and last internal edges of each $\sigma'_i$ collapse to the corresponding first and last triangles and internal edges of $\sigma_i$.\\

We start by building the strip $\sigma'_1$ in $\mathcal{T}^\odot$. Suppose that  $\sigma_1 = (\tri_{1},e_{1}, \tri_{2},e_{2},\ldots,e_{m-1},\tri_{m})$. We build it in order from the start, placing triangles and rectangles into the handles, arbitrarily making any choices of where to place the next triangle relative to other triangles in a thick triangle, until we reach a barrier, in the form of a rectangle in $e_{i+1}^\odot$ blocking us from extending the strip from $\tri_{i}^\odot$ into $\tri_{i+1}^\odot$. See Figure \ref{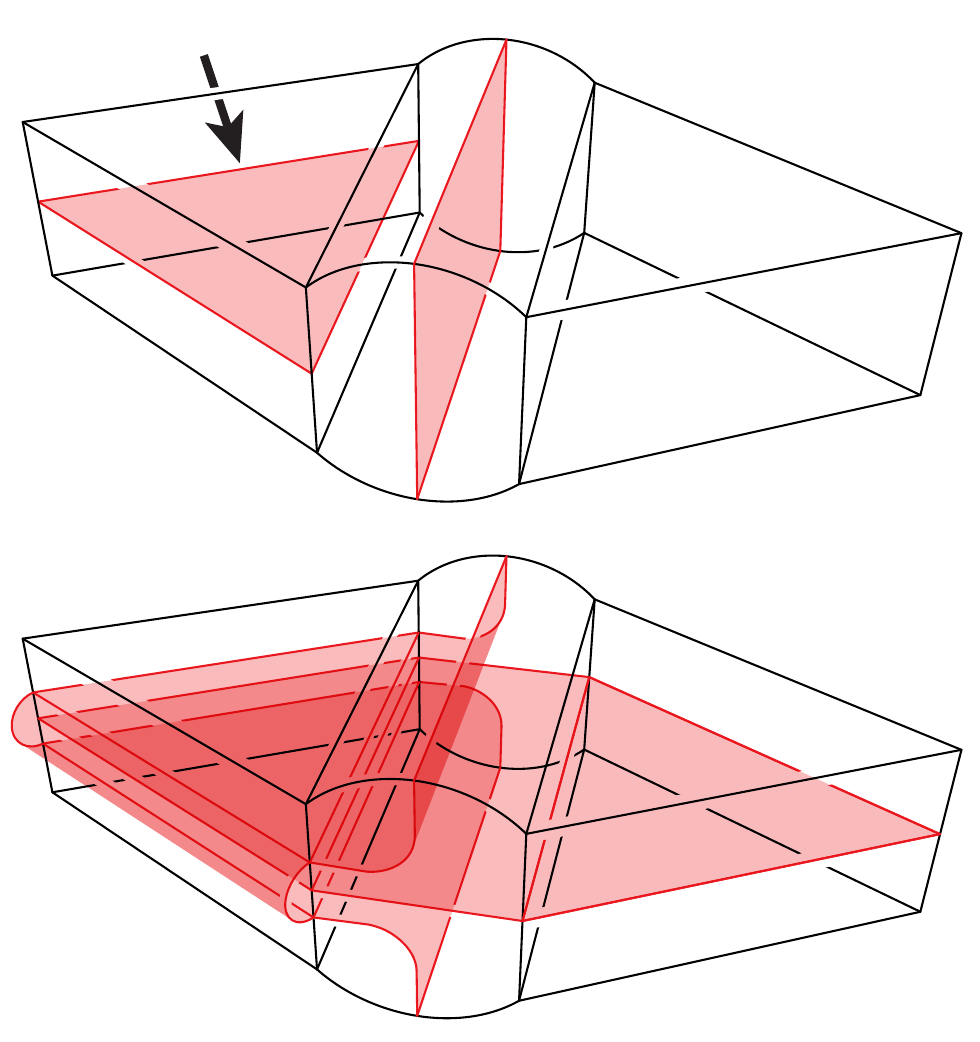}.\\

\begin{figure}[htb]
\centering
\includegraphics[width=0.7\textwidth]{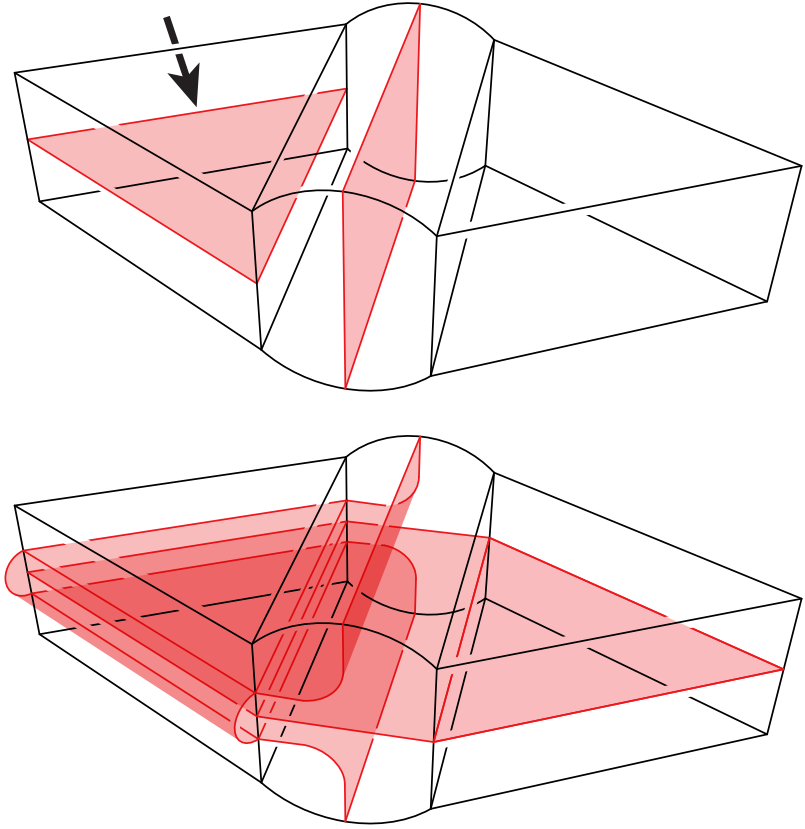}
\caption{A rectangle in the way of extending the strip and the result of pushing it aside. The left thick triangle is $\tri_{i}^\odot$, the right thick triangle is $\tri_{i+1}^\odot$ and the thick edge in the middle is $e_{i+1}^\odot$. The strip enters $\tri_{i}^\odot$ from the thick edge $e_{i}^\odot$ marked with an arrow. In these diagrams we do not specify how many thick triangles are incident above and below $e_{i+1}^\odot$, and the number does not alter the argument.}
\label{push_aside_behind.pdf}
\end{figure}
 
We ``push aside'' any such rectangles behind us, removing them and reconnecting the strip by adding two triangles and three rectangles each as shown, one of the rectangles pushing into the thick edge incident to $\tri_{i}^\odot$ other than $e_{i+1}^\odot$ and $e_i^\odot$. This process adds a finite number of pieces to the strip and the result is once again embedded. Using this move, we can deal with any further obstructions, and we end up with an altered strip, embedded in $\mathcal{T}^\odot$.\\

Note that this process of pushing the strip aside cannot move any preexisting triangles in the strip, nor can it change which edges the triangles are connected to in the strip. Thus, although the resulting strip $\sigma'_1$ may be longer than the original strip $\sigma_1$, it starts and ends in the same way as $\sigma_1$ does, as required.\\

We continue in exactly the same way for all of the other strips: we start building them in the appropriate thick triangle, with an arbitrary position relative to any preexisting triangles in that thick triangle, and build along the strip, pushing parts of this or other strips aside as needed. By the same argument as before, these moves do not change the start and end of any of the previous strips. The result is a set of strips of triangles embedded in $\mathcal{T}^\odot$, with the required beginning and ending properties.
\end{algor}

\begin{algor}(Make all horo-normal surfaces porous.)
\label{fix_all_horo-normal} 

\indent{\bf Input:}  an ideal triangulation $\mathcal{T}$.\\
\indent{\bf Output:} an ideal triangulation $\mathcal{T}_*,$ obtained from $\mathcal{T}$ by a finite number of pillow insertions, such that every horo-normal surface in $\mathcal{T}_*$ is porous.\\

There are a finite number of horo-normal surfaces in $\mathcal{T}$. For each one of them, we need to make it (or rather, its descendants) porous. We will do this for each surface by specifying a set of strips of triangles in $\til{\mathcal{T}}$ which will connect the components of $\til{R_\text{in}}$ (for that surface) together (condition 1 of porousity, using Algorithm \ref{connect two components}), and connect $\til{R_\text{in}}$ out to any vertices that do not already have a positive-length edge incident to them (condition 2 of porousity, using the variant of Algorithm \ref{connect two components} outlined in Remark \ref{can do condition 2 also}). We choose these strips in $\til{\mathcal{T}}$, project them to $\mathcal{T}$, and then use Algorithm \ref{make strips embedded} to modify them so that their union is embedded in $\mathcal{T}^\odot$. This done, we can apply Algorithm \ref{connect two components} and the variant outlined in Remark \ref{can do condition 2 also}. Notice that the modifications in Algorithm \ref{make strips embedded} do not move the start and end triangles of the strips, so the modified strips work the same way to connect components of $\til{R_\text{in}}$, or to connect $\til{R_\text{in}}$ to vertices. The edges of the start and end triangles of a strip $\sigma'$ may be split many times by pillow insertions along other strips. However, by Remark \ref{splits_connected}, the splits of positive-length edges that we actually connect to when inserting pillows along $\sigma'$ have midpoints in the corresponding descendants of the original components of $\til{R_\text{in}}$.  \\

 For each horo-normal surface, there are a finite number of strips that are needed. To see this, first choose some connected fundamental domain $D_0\subset\til{M}$ for $M$ made from some union of tetrahedra of the original triangulation $\til{\mathcal{T}}$. Consider the connected components of $\til{R_\text{in}}$ that intersect $D_0$. There will be some finite number of these since $D_0$ is made from finitely many tetrahedra, each of which can intersect at most one component of $\til{R_\text{in}}$. $D_0$ has some finite number of $\pi_1M$-translates that touch it along faces of $\til{\mathcal{T}}$. Connecting all of the components that intersect $D_0$ together, and connecting the resulting single component for $D_0$ to each of the components intersecting the touching $\pi_1M$-translates (if they are not already the same component as the one intersecting $D_0$) results in a connected $\til{R_\text{in}}$. In addition, the manifold has only finitely many boundary components, lifts of all of which must be incident to tetrahedra in $D_0$. Therefore we also need only finitely many strips to connect out to the vertices corresponding to those boundary components.\\
 
Therefore, applying Algorithm \ref{make strips embedded} to the finite set of strips of triangles, then applying Algorithm \ref{connect two components} and the variant outlined in Remark \ref{can do condition 2 also} produces $\mathcal{T}_*$ via a finite number of pillow insertions.
\end{algor}

\begin{cor}\label{get_all_porous}
Suppose $M$ is the interior of a compact, connected, orientable 3-manifold with non-empty boundary. Then there exists a triangulation $\mathcal{T}_*$  such that all surfaces in horo-normal form relative to $\mathcal{T}_*$ are porous.
\end{cor}
\begin{proof}
It is well-known that every manifold that is the interior of a compact 3-manifold with non-empty boundary admits an ideal triangulation (for example, Proposition 1.2 of \cite{tillmann_norm_surf} is a summary of results in \cite{matveev} proving this). We can then apply Algorithm \ref{fix_all_horo-normal} to obtain $\mathcal{T}_*$.
\end{proof}

\begin{rmk}
The algorithm given in this section is likely to be very inefficient in the number of tetrahedra required. One simple way to improve it would be to check if inserting pillows along strips to improve one horo-normal surface would also improve the others. If so, we would be able to use fewer strips of triangles.
\end{rmk}

\section{Representations}\label{Representations}

\begin{defn}\label{map_to_R(M)}
We define a map 
$$\mathfrak{R}_{(\mathcal{T};S)}:\widehat{\mathfrak{D}}(M;\mathcal{T};S) \rightarrow \mathfrak{R}(M)$$
up to conjugation as follows: Given $Z \in \widehat{\mathfrak{D}}(M;\mathcal{T};S)$, the developing map $\Phi_Z$ gives us for each vertex in $\til{\mathcal{V}}$ a position on $\bdry \Hthree$. To construct a representation $\mathfrak{R}_{(\mathcal{T};S)}(Z) = \rho_Z:\pi_1M\rightarrow \PSL$, for each $\gamma \in \pi_1M$ we consider the translate $\gamma\tri$ of $\tri$ (the triangle we start developing from), which is another triangle of $\til{\mathcal{T}}$. No edges of $\tri$ are in $\til{\mathcal{E}}_0$, and the same is true for the edges of $\gamma\tri$, since both are lifts of the same triangle of $\mathcal{T}$. The positions of the vertices $v,v',v''$ of $\tri$ on $\bdry\Hthree$ are distinct by definition, and the developed positions of the vertices $\gamma v,\gamma v',\gamma v''$ of $\gamma\tri$ are distinct by Lemma \ref{E0_is_order_1}. We define $\rho_Z(\gamma)$ to be the unique element of $\PSL$ that takes the positions of $v,v',v''$ on $\bdry \Hthree$ to the positions of $\gamma v,\gamma v',\gamma v''$. We also define 
$$\mathfrak{R}_{\mathcal{T}}:\widehat{\mathfrak{D}}(M;\mathcal{T}) \rightarrow \mathfrak{R}(M)$$
as $\mathfrak{R}_{(\mathcal{T};S)}$ on each $\widehat{\mathfrak{D}}(M;\mathcal{T};S)$ in the disjoint union that makes up $\widehat{\mathfrak{D}}(M;\mathcal{T})$.
\end{defn}

To show that the map results in a representation, we need the following:
\begin{lemma}\label{get_homomorphism}
$\mathfrak{R}_{(\mathcal{T};S)}(Z) = \rho_Z:\pi_1M\rightarrow \PSL$ is a homomorphism. 
\end{lemma}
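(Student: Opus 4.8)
The plan is to prove the equivariance statement that $\Phi_Z$ intertwines the $\pi_1 M$--action on $\til{\mathcal{V}}$ with the action of $\rho_Z$ on $\bdry\Hthree$, and then to deduce the homomorphism property from the fact that $\PSL$ acts simply transitively on ordered triples of distinct points of $\bdry\Hthree$.

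\textbf{Step 1 (equivariance of the developing map).} I would first show that for every $\gamma \in \pi_1 M$ and every $u \in \til{\mathcal{V}}$,
\[
\Phi_Z(\gamma u) = \rho_Z(\gamma)\bigl(\Phi_Z(u)\bigr).
\]
Fix $u$ and pick a valid chain of triangles $C$ from $\tri$ to a triangle containing $u$, as in Definition \ref{dev_map}. A deck transformation $\gamma$ preserves $\til{\mathcal{T}}$, $\til{\mathcal{E}}_0$, $\til{\mathcal{E}}_+$, $\til{R_{\text{in}}}$ and the types of faces and tetrahedra, so the translated chain $\gamma C$ is again a valid chain, running from $\gamma\tri$ (still of type $111$) to a triangle containing $\gamma u$. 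Since $C$ and $\gamma C$ project to the same chain in $\mathcal{T}$, and the dihedral--angle data of Definition \ref{xdv_data} is attached to tetrahedra of $\mathcal{T}$, developing along $\gamma C$ uses exactly the same (leading--order) preferred cross ratios as developing along $C$. To compute $\Phi_Z(\gamma u)$ I would use the valid chain from $\tri$ to $\gamma u$ obtained by first taking a valid chain from $\tri$ to $\gamma\tri$ (which exists by connectedness of $\til{R_{\text{in}}}$, exactly as in the construction of $\Phi_Z$) and then appending $\gamma C$; this is legitimate because the consistent development condition \ref{consistent_dev} makes $\Phi_Z(\gamma u)$ independent of the chosen chain. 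When such a chain reaches $\gamma\tri$, its three vertices lie at $\Phi_Z(\gamma v), \Phi_Z(\gamma v'), \Phi_Z(\gamma v'')$, which by the definition of $\rho_Z(\gamma)$ in \ref{map_to_R(M)} equal $\rho_Z(\gamma)\Phi_Z(v), \rho_Z(\gamma)\Phi_Z(v'), \rho_Z(\gamma)\Phi_Z(v'')$. Continuing along $\gamma C$, I invoke the fact that the leading--order developing procedure is $\PSL$--equivariant: applying an element of $\PSL$ to the three initial positions applies that element to every developed position. This is because the developing formula (\ref{develop_eq}) expresses the new vertex as a function of the three old vertices and the cross ratio alone, and $\PSL$ preserves cross ratios, so the formula is unchanged under simultaneously moving all four points by an element of $\PSL$; by Theorem \ref{*det3} the same statement holds at the level of leading--order data. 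Hence the position developed for $\gamma u$ along $\gamma C$ is $\rho_Z(\gamma)$ applied to the position developed for $u$ along $C$, i.e.\ $\rho_Z(\gamma)(\Phi_Z(u))$, proving the claim.

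\textbf{Step 2 (homomorphism).} Given $\gamma_1, \gamma_2 \in \pi_1 M$, apply Step 1 twice: for each vertex $x$ of $\tri$,
\[
\Phi_Z\bigl((\gamma_1\gamma_2)x\bigr) = \rho_Z(\gamma_1)\bigl(\Phi_Z(\gamma_2 x)\bigr) = \rho_Z(\gamma_1)\rho_Z(\gamma_2)\bigl(\Phi_Z(x)\bigr).
\]
The positions $\Phi_Z(v), \Phi_Z(v'), \Phi_Z(v'')$ are distinct since $\tri$ is of type $111$, and $\Phi_Z(\gamma_1\gamma_2 v), \Phi_Z(\gamma_1\gamma_2 v'), \Phi_Z(\gamma_1\gamma_2 v'')$ are distinct by Lemma \ref{E0_is_order_1} since $\gamma_1\gamma_2\tri$ also has all edges in $\til{\mathcal{E}}_+$; so $\rho_Z(\gamma_1\gamma_2)$ is, by definition, the unique element of $\PSL$ sending the first ordered triple to the second. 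The displayed identities show $\rho_Z(\gamma_1)\rho_Z(\gamma_2)$ does this too, whence $\rho_Z(\gamma_1\gamma_2) = \rho_Z(\gamma_1)\rho_Z(\gamma_2)$; since $\rho_Z$ visibly sends the identity to the identity, it is a homomorphism.

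\textbf{Main obstacle.} The delicate point is Step 1: one must carefully separate the role of $\gamma$, which permutes the triangles and vertices of $\til{\mathcal{T}}$ but leaves the per--tetrahedron dihedral--angle data untouched, from the role of $\rho_Z(\gamma) \in \PSL$, which moves developed positions on $\bdry\Hthree$; and one must verify the $\PSL$--equivariance of the leading--order developing procedure, which rests on the cross--ratio description underlying Theorem \ref{*det3}. The availability of the consistent development condition (so that $\Phi_Z(\gamma u)$ may be computed along the particular concatenated valid chain used above) is exactly what makes this argument go through.
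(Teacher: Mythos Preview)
Your proof is correct and follows essentially the same approach as the paper: concatenate a valid chain to $\gamma\tri$ with the $\gamma$--translate of a chain, and use that the translated chain carries the same dihedral--angle data while the relevant group action preserves cross ratios, so developing commutes with the action. The only organisational difference is that the paper carries out the induction at the level of the full $\C((\zeta))$--valued developing and $\text{PSL}_2\big(\C((\zeta))\big)$ before reading off the conclusion, whereas you work directly at the leading--order level on $\bdry\Hthree$ via Theorem~\ref{*det3}; you also package the argument as a general equivariance statement $\Phi_Z(\gamma u)=\rho_Z(\gamma)\Phi_Z(u)$ and then deduce the homomorphism property, which is a clean way to present the same computation.
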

\begin{proof}
Suppose we have valid chains of triangles 
$$C_1 = \left(\tri = \tri^{(0)}_1,\tri^{(1)}_1,\ldots,\tri^{(n_1)}_1 = \gamma_1\tri \right)$$ 
$$C_2 = \left(\tri = \tri^{(0)}_2,\tri^{(1)}_2,\ldots,\tri^{(n_2)}_2 = \gamma_2\tri \right).$$
An example of a chain from $\tri$ to $\gamma_2\gamma_1\tri$ is given by 
$$C_3 = \left( \tri = \tri^{(0)}_2,\tri^{(1)}_2,\ldots,\tri^{(n_2)}_2 = \gamma_2\tri = \gamma_2\tri^{(0)}_1,\gamma_2\tri^{(1)}_1,\ldots,\gamma_2\tri^{(n_1)}_1 = \gamma_2\gamma_1\tri \right).$$
Also let $C_0 = \left(\tri\right)$ be the trivial chain.
Let $C^{(j)}$ denote the subchain of $C$ up to the $j^{\text{th}}$ triangle.
Let $\overline{\Phi}_{(C,Z)}(v)$ be the developed position of $v$ into the ends of $T_\zeta$ along a chain of triangles $C$ (so $\overline{\Phi}_{(C,Z)}(v)_H = \Phi_Z(v)$, but the higher order information can depend on $C$). Let $I_{(C,Z)} \in \text{PSL}_2\big(\C((\zeta))\big)$ be the unique element that takes the developed position of the first triangle of $C$ to the developed position of the last.\\

By definition, $$\overline{\Phi}_{(C_2,Z)}(\gamma_2\tri) = I_{(C_2,Z)}\overline{\Phi}_{(C_0,Z)}(\tri)$$
Here we are abusing notation in having $\overline{\Phi}$ take input the ordered triple of vertices forming a triangle rather than just one vertex, and $\overline{\Phi}_{(C_0,Z)}(\tri)$ is a complicated way to write the position of $\tri$ that we start developing from. \\
  
We would like to show that for each $j=0,\ldots,n_1$:
\begin{equation}\label{devel_steps}
\overline{\Phi}_{(C_3^{(n_2+j)}, Z)}(\gamma_2\tri^{(j)}_1) = I_{(C_2,Z)}\overline{\Phi}_{(C_1^{(j)},Z)}(\tri^{(j)}_1)
\end{equation}
When $j=0$ this is the previous equality, and when $j=n_1$ this says that:
$$\overline{\Phi}_{(C_3, Z)}(\gamma_2\gamma_1\tri) = I_{(C_2,Z)}\overline{\Phi}_{(C_1,Z)}(\gamma_1\tri)$$
and so 
$$\overline{\Phi}_{(C_3, Z)}(\gamma_2\gamma_1\tri) = I_{(C_2,Z)}\overline{\Phi}_{(C_1,Z)}(\gamma_1\tri) = I_{(C_2,Z)}I_{(C_1,Z)} \overline{\Phi}_{(C_0,Z)}(\tri)  $$
which is what we need. We show equation (\ref{devel_steps}) by induction. As noted before, we have the case when $j=0$. When we develop from $\gamma_2\tri^{(j)}_1$ to $\gamma_2\tri^{(j+1)}_1$, and from $\tri^{(j)}_1$ to $\tri^{(j+1)}_1$, in both cases we are developing out one further vertex position, the position determined by the positions of the triangles we are developing from and the dihedral angle. The dihedral angle is the same in both cases because the corresponding dihedral angle in $\mathcal{T}$ is the same. Since elements of $\text{PSL}_2\big(\C((\zeta))\big)$ preserve cross ratios, the positions of the two new vertices are related in the appropriate way by $I_{(C_2,Z)}$.
\end{proof}
A similar argument shows that the representation we get is independent (up to conjugation) of the triangle $\tri$ we choose to start developing from.\\

The following definition closely follows an argument of Tillmann \cite{tillmann_degenerations}.\\

\begin{defn}\label{psi_rho}
Let $T_1,\ldots,T_h$ be the torus boundary components of $M$, and so $\bdry M = \bigcup T_i$. A vertex $v \in \til{\mathcal{V}}$ is stabilised by a unique subgroup $P_v \subset \pi_1M$ which is conjugate to $\text{im}(\pi_1T_i\rightarrow \pi_1M)$ for some $i$. Let $v_1,\ldots, v_h$ be a choice of such vertices, one for each of the tori. Let $\rho:\pi_1M\rightarrow \PSL$ be any representation. The subgroup $\rho(P_{v_i}) \subset \PSL$ fixes either one or two points of $\bdry \Hthree$, or the whole of $\bdry \Hthree$ if $\rho(P_{v_i}) = \{1\}$. For each $i$, choose a $w_i \in \bdry \Hthree$ which is fixed by $\rho(P_{v_i})$. We define a map 
$$\Psi_\rho:\til{\mathcal{V}} \rightarrow \bdry \Hthree$$ 
by extending to all other vertices equivariantly. 
\end{defn}
Compare with Definition \ref{dev_map}. $\Psi_\rho$ depends on the choices of fixed point $w_i$ but the maps are otherwise well defined up to conjugation of $\Hthree$. There are at most $2^h$ choices of $\Psi_\rho$ unless $\rho(P_{v_i}) = \{1\}$ for some $v_i$, in which case there are infinitely many choices.

\begin{defn}
Let $\rho\in\mathfrak{R}(M)$ and fix a choice of $\Psi_\rho$. We associate a horo-normal surface 
$$S(\Psi_\rho)$$
to $\Psi_\rho$ as follows: 
Let $\mathcal{E}$ be the edge set of $\mathcal{T}$.  Consider the images of the endpoints of edges in $\til{\mathcal{E}}$ under $\Psi_\rho$. Let $\til{\mathcal{E}}_0$ be the set of such edges whose endpoints map to the same point of $\bdry \Hthree$. By the equivariance of $\Psi_\rho$, $\til{\mathcal{E}}_0$ descends to $\mathcal{E}_0 \subset \mathcal{E}$. Let $\mathcal{E}_+ = \mathcal{E} \setminus \mathcal{E}_0$. Let $S(\Psi_\rho)$ be the horo-normal surface associated with the subsets $\mathcal{E}_0$ and $\mathcal{E}_+$ as in Definition \ref{E_0_to_horo-normal}.
\end{defn}
\begin{lemma}
The subset $\mathcal{E}_0$ in the above definition has no bad loops.
\end{lemma}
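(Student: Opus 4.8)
I would argue by contradiction, directly from the defining property of $\til{\mathcal{E}}_0$: an edge of $\til{\mathcal{T}}$ lies in $\til{\mathcal{E}}_0$ precisely when its two endpoints have the same image under $\Psi_\rho$. So suppose there were a bad loop, i.e.\ a cyclic sequence of edges $e_1, e_2, \ldots, e_n$ of $\til{\mathcal{E}}$ closing up into a loop, with vertices $v_0, v_1, \ldots, v_{n-1}, v_n = v_0$ so that $e_i$ joins $v_{i-1}$ to $v_i$, exactly one of which, say $e_j$, lies in $\til{\mathcal{E}}_+$ and all the rest lie in $\til{\mathcal{E}}_0$.

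For every $i \neq j$ we then have $\Psi_\rho(v_{i-1}) = \Psi_\rho(v_i)$ by definition of $\til{\mathcal{E}}_0$. Reading around the loop starting just past $e_j$ and avoiding $e_j$, transitivity of equality gives
$$\Psi_\rho(v_j) = \Psi_\rho(v_{j+1}) = \cdots = \Psi_\rho(v_n) = \Psi_\rho(v_0) = \Psi_\rho(v_1) = \cdots = \Psi_\rho(v_{j-1}).$$
In particular $\Psi_\rho(v_{j-1}) = \Psi_\rho(v_j)$, so the endpoints of $e_j$ coincide under $\Psi_\rho$, forcing $e_j \in \til{\mathcal{E}}_0$. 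This contradicts the assumption that $e_j$ is the unique edge of the loop in $\til{\mathcal{E}}_+$, so no bad loop can exist.

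There is essentially no genuine obstacle here; the only point to be careful about is that the argument must be carried out with $\til{\mathcal{E}}_0$ in the universal cover, since $\Psi_\rho$ is only defined (equivariantly, up to conjugation) on $\til{\mathcal{V}}$ and there is no canonical value to assign to a vertex of $\mathcal{T}$ itself. This is consistent with the statement being proved, since ``no bad loops'' is by definition \ref{bad_loop} a condition about loops in $\til{\mathcal{E}}$, and the allowed loops may repeat vertices or edges without affecting the transitivity step. Once the loop is unwound as above the conclusion is immediate.
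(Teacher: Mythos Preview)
Your argument is correct and is essentially identical to the paper's own proof: both assume a bad loop, chain the equalities $\Psi_\rho(v_{i-1})=\Psi_\rho(v_i)$ along the edges in $\til{\mathcal{E}}_0$, and conclude that the endpoints of the unique $\til{\mathcal{E}}_+$ edge coincide under $\Psi_\rho$, a contradiction. The only cosmetic difference is that the paper places the distinguished edge at the end of the loop rather than at an arbitrary position $j$.
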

\begin{proof}
Suppose for contradiction that this subset $\mathcal{E}_0$ has a bad loop. Then by definition of a bad loop (Definition \ref{bad_loop}) all but one edge of the loop is in $\til{\mathcal{E}}_0$. Let the loop be given by edges $e_1,e_2,\ldots,e_n \in\til{\mathcal{E}}$, where $e_i$ has endpoints $v_i,v_{i+1}\in\til{\mathcal{V}}$ for $i=1,2,\ldots,n-1$ and $e_n$ has endpoints $v_n,v_1$. Suppose that $e_1,e_2,\ldots,e_{n-1} \in \til{\mathcal{E}}_0$ and $e_{n} \in \til{\mathcal{E}}_+$. Then $\Psi_\rho(v_1) = \Psi_\rho(v_2) = \cdots = \Psi_\rho(v_{n})$, but $\Psi_\rho(v_n) \neq \Psi_\rho(v_1)$, which is our contradiction.
\end{proof}

\begin{defn}
If $A$ is an abelian group, the \textbf{generalised dihedral group} of $A$ is the semidirect product $A \rtimes \Z_2$ with $\Z_2$ acting on $A$ by inverting elements. A representation $\rho$ is \textbf{dihedral} if $\rho(\pi_1M)$ is a generalised dihedral group.
\end{defn}

\begin{lemma}\label{irred_not_dih_works}
If $\rho\in\mathfrak{R}(M)$ is irreducible then either $\rho$ is dihedral or every $\Psi_\rho$ is such that $\left|\Psi_\rho(\til{\mathcal{V}})\right| \geq 3$.
\end{lemma}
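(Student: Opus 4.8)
The plan is to prove the contrapositive: assuming $\rho$ is irreducible and that some choice of $\Psi_\rho$ satisfies $|\Psi_\rho(\til{\mathcal{V}})| \le 2$, I would show that $\rho(\pi_1 M)$ is a generalised dihedral group. The starting point is that, by the equivariance in definition \ref{psi_rho}, the finite set $\Psi_\rho(\til{\mathcal{V}}) \subset \bdry\Hthree = \CP^1$ is invariant under $\rho(\pi_1 M)$; since $\til{\mathcal{V}} \neq \emptyset$ it is nonempty, so it has either one or two elements.

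First I would rule out the one-element case: a subgroup of $\PSL$ with a global fixed point on $\CP^1$ is reducible, so $|\Psi_\rho(\til{\mathcal{V}})| = 1$ contradicts irreducibility. Hence $\Psi_\rho(\til{\mathcal{V}}) = \{p,q\}$ is a $\rho(\pi_1 M)$-invariant pair of distinct points. Choosing coordinates on $\CP^1$ with $p = 0$ and $q = \infty$, the image $\rho(\pi_1 M)$ lies in the setwise stabiliser $N$ of $\{0,\infty\}$, which consists of the Möbius maps $z \mapsto \lambda z$ (fixing both points), forming a subgroup $D \cong \C^*$, together with the maps $z \mapsto a/z$ (swapping the two points); note that $N$ is exactly the generalised dihedral group of $\C^*$.

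Next I would set $A := \rho(\pi_1 M) \cap D$, an abelian subgroup. Since $\rho$ is irreducible it cannot fix the single point $0$, so $\rho(\pi_1 M) \not\subseteq D$ and there is an element $g \in \rho(\pi_1 M)$ of the form $z \mapsto a/z$; as $\rho(\pi_1 M)/A$ injects into $N/D \cong \Z_2$ and is nontrivial, $A$ is normal of index $2$ and $\rho(\pi_1 M) = A \sqcup gA$. The two remaining facts are elementary computations with fractional linear maps: $g^2 = \mathrm{id}$ in $\PSL$, so $\langle g\rangle \cong \Z_2$ and $A \cap \langle g\rangle = \{1\}$; and conjugation by $g$ carries $z \mapsto \lambda z$ to $z \mapsto \lambda^{-1}z$, hence inverts every element of $A$. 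Therefore $\rho(\pi_1 M) = A \rtimes \langle g\rangle$ with $\Z_2$ acting by inversion --- precisely the generalised dihedral group of $A$ --- which completes the contrapositive.

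I do not expect a serious obstacle here; this lemma is essentially a classification of the subgroups of $\PSL$ preserving a set of at most two points of $\CP^1$. The only steps that need a little care are checking that $A$ is genuinely of index exactly two (this is exactly where irreducibility is used, to produce the swapping element $g$) and verifying $g^2 = \mathrm{id}$ in $\PSL$, which prevents $\langle g\rangle$ from meeting $A$ and guarantees a clean semidirect product decomposition.
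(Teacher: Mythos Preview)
Your proposal is correct and follows essentially the same route as the paper: both arguments reduce to the observation that a subgroup of $\PSL$ preserving a two-point set $\{0,\infty\}$ setwise consists of diagonal elements (an abelian group $A$) together with anti-diagonal elements, each of which squares to the identity and conjugates $A$ by inversion, so the subgroup is $A\rtimes\Z_2$. The only cosmetic difference is that you begin by invoking equivariance to see that the whole image $\Psi_\rho(\til{\mathcal{V}})$ is $\rho(\pi_1M)$-invariant, whereas the paper works with the orbit of a single chosen $w_i$; since that orbit sits inside the image, the two framings are equivalent and the computations from that point on are identical.
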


\begin{proof}
We use the same notation as in Definition \ref{psi_rho}.  Let $w_i$ be a point fixed by $\rho(P_{v_i})$, so $w_i \in \Psi_\rho(\til{\mathcal{V}})$.  Let $\text{fix}(w_i) \subset G = \rho(\pi_1M)$ be the set of isometries that have $w_i$ as a fixed point. The representation $\rho$ is irreducible, which means that no point of $\bdry \Hthree$ is fixed by all of $G$, and in particular $\text{fix}(w_i)$ is a proper subset of $G$. \\

Either there are at least three translates of $w_i$ (and we have $\left|\Psi_\rho(\til{\mathcal{V}})\right| \geq 3$) or every element of $G \setminus \text{fix}(w_i)$ is of order 2, taking $w_i$ to some $w_i' \neq w_i$ and back. In this case we must also have $\text{fix}(w_i') = \text{fix}(w_i)$, otherwise if we could move one without moving the other, we would obtain a third point. Thus the subgroup $\text{fix}(w_i)$ is abelian, since it fixes two distinct points on $\bdry \Hthree$.\\

If we arrange the fixed points at 0 and $\infty$ on $\bdry\Hthree$ then every element $a \in \text{fix}(w_i)$ is diagonal and every element $r \in G \setminus \text{fix}(w_i)$ is anti-diagonal. One can verify that $rar=a^{-1}$, and so $G = \text{fix}(w_i) \rtimes \Z_2$ is a generalised dihedral group.
\end{proof}

\begin{thm}\label{xdv_for_a_rho}
Let $M$ be the interior of a compact, connected, orientable 3-manifold with non-empty boundary consisting of a disjoint union of tori, and $\mathcal{T}$ an ideal triangulation of $M$. Let $\rho\in\mathfrak{R}(M)$ such that there is a choice of $\Psi_\rho$ with $\left|\Psi_\rho(\til{\mathcal{V}})\right| \geq 3$ and $S(\Psi_\rho)$ porous. Then there exists
 $Z_\rho \in \widehat{\mathfrak{D}}(M;\mathcal{T};S(\Psi_\rho))$ such that $\mathfrak{R}_{(\mathcal{T};S(\Psi_\rho))}(Z_\rho) = \rho$ up to conjugation.
\end{thm}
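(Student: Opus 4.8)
The plan is to build $Z_\rho$ as the family of cross ratios of a suitable $\zeta$-adic thickening of $\Psi_\rho$. Concretely, I first choose a lift $\rho'\colon\pi_1M\to\mathrm{PSL}_2(\C[[\zeta]])$ of $\rho$, i.e.\ one whose reduction mod $\zeta$ is $\rho$; such lifts exist (the constant one does), but I will choose its first-order part, a cocycle in $Z^1(\pi_1M;\mathfrak{sl}_2(\C)_{\mathrm{Ad}\,\rho})$, generically, and compatibly with the cusps, for reasons given below. With the notation of definition \ref{psi_rho}, for each boundary torus $T_i$ pick an end $\overline{\Phi}(v_i)$ of $T_\zeta$ fixed by $\rho'(P_{v_i})$ and reducing to $w_i$ mod $\zeta$, and extend these $\rho'$-equivariantly to a map $\overline{\Phi}\colon\til{\mathcal{V}}\to\C((\zeta))\cup\{\infty\}$; its reduction mod $\zeta$ is then exactly $\Psi_\rho$. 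As in the proof of theorem \ref{*det3} we may conjugate so that all values of $\overline{\Phi}$ lie in $\C[[\zeta]]$ without changing any cross ratio. Then $Z_\rho$ is defined by recording, for each tetrahedron of $\mathcal{T}$ of type $1111$, $211$, $22$ or $31$, the order and lowest-order coefficient of the preferred cross ratio(s) of the $\overline{\Phi}$-positions of the vertices of any lift of it to $\til{\mathcal{T}}$; since $\mathrm{PSL}_2(\C((\zeta)))$ preserves cross ratios, equivariance makes this independent of the lift, and the computation in the next paragraph shows it has the shape prescribed in definition \ref{xdv_data}.

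The essential requirement is that $Z_\rho$ satisfies the degeneration order condition \ref{degen_order} relative to $S(\Psi_\rho)$. For a type $1111$ tetrahedron this is automatic: its six edges lie in $\til{\mathcal{E}}_+$, so its four $\overline{\Phi}$-vertices have distinct $\Psi_\rho$-images, and by lemma \ref{*det_cr} its cross ratios are the cross ratios of four distinct points of $\bdry\Hthree$, which lie in $\C\setminus\{0,1\}$. For the other types some edges lie in $\til{\mathcal{E}}_0$, so the corresponding pairs of vertices are $\Psi_\rho$-identified; writing $\overline{\Phi}(v)=\Psi_\rho(v)+\delta_v\zeta+\cdots$ in a chart, a short calculation with (\ref{cross_ratio_eq}) shows that the preferred cross ratio acquires exactly the order required by \ref{degen_order} (with nonzero leading coefficient, as in \ref{xdv_data}) precisely when the perturbation coefficients $\delta_v$ of the $\Psi_\rho$-identified vertices within each tetrahedron are in general position --- for instance $\delta_v\neq\delta_{v'}$ whenever $v,v'$ span an edge of $\til{\mathcal{E}}_0$ in a tetrahedron of type $211$, $22$ or $31$ --- the precise list of inequations being read off from figure \ref{degen_dihedral_angles.pdf}. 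The $\delta_v$ depend on the cocycle and on the finitely many additional peripheral parameters (genuinely free for parabolic or trivial-image cusps) by affine-rational formulas, and since $\mathcal{T}$ has finitely many tetrahedra and $\overline{\Phi}$ is equivariant, the whole degeneration order condition becomes finitely many Zariski-open conditions on a finite-dimensional affine space of admissible first-order data. A generic point of that space satisfies all of them, provided each is nonempty.

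Showing that each of these conditions is nonempty --- that no $\Psi_\rho$-identified pair of vertices inside a single tetrahedron is forced to have $\delta_v=\delta_{v'}$ for every admissible choice of cocycle and peripheral data --- is the crux, and the step I expect to be the hardest. The danger is that $v$ and $v'$ lie in one $\pi_1M$-orbit, so that $\rho'$-equivariance ties $\delta_v$ to $\delta_{v'}$; ruling this out uses the combinatorics of which edges of $\mathcal{T}$ are forced into $\til{\mathcal{E}}_0$ by $\Psi_\rho$, the hypotheses $|\Psi_\rho(\til{\mathcal{V}})|\geq 3$ and $S(\Psi_\rho)$ omnipresent, and the facts that at a loxodromic cusp the peripheral fixed point varies with the cocycle while at a parabolic cusp it can be taken free once the cocycle is suitably upper-triangular there (which is also what guarantees the choice of $\overline{\Phi}(v_i)$ in the first paragraph).

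Granting such a $\rho'$, the remaining verifications are formal. Equations (\ref{dv_eq1}), (\ref{dv_eq2}) and equality of opposite dihedral angles are cross ratio identities valid over any field, hence hold for the $\overline{\Phi}$-cross ratios; the degeneration order condition holds by construction; and the consistent development condition \ref{consistent_dev} holds because $\overline{\Phi}$ is itself a developing map: given three of the four $\overline{\Phi}$-vertices of a tetrahedron and its dihedral angle, (\ref{develop_eq}) returns the fourth, so developing along any valid chain of triangles from $\tri$ reproduces $\overline{\Phi}$ exactly, and in particular its lowest-order part --- which by theorem \ref{*det3} is all that the developing of $Z_\rho$ in $\widehat{\mathfrak{D}}$ computes --- is $\Psi_\rho$ of the target vertex, independently of the chain. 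Finally $\Phi_{Z_\rho}$ is the mod-$\zeta$ reduction of $\overline{\Phi}$, so $\Phi_{Z_\rho}=\Psi_\rho$, which is $\rho$-equivariant; since $\tri$ is of type $111$ its three vertices have distinct images under $\Psi_\rho$, so for each $\gamma\in\pi_1M$ the element $\mathfrak{R}_{(\mathcal{T};S(\Psi_\rho))}(Z_\rho)(\gamma)$ --- which by definition sends those three images to the images of the vertices of $\gamma\tri$ --- agrees with $\rho(\gamma)$ on three distinct points of $\bdry\Hthree$ and hence equals $\rho(\gamma)$, so $\mathfrak{R}_{(\mathcal{T};S(\Psi_\rho))}(Z_\rho)=\rho$ up to conjugation.
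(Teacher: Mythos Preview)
Your approach is genuinely different from the paper's, and the difference is instructive. You try to build a global $\rho'$-equivariant map $\overline{\Phi}\colon\til{\mathcal{V}}\to\C[[\zeta]]$ by perturbing each vertex position, and then read off cross ratios. The paper does something much simpler: it assigns an arbitrary nonzero offset $\delta_i\in\C\setminus\{0\}$ to each \emph{edge} $e_i\in\mathcal{E}_0$ (one choice per edge of $\mathcal{T}$, extended equivariantly to lifts by the derivative of the $\rho$-action), and computes the leading coefficients of the relevant cross ratios directly from these edge offsets together with $\Psi_\rho$. The point is that the data required for a type $31$ tetrahedron in definition \ref{xdv_data} only imposes equation (\ref{dv_eq1}), not (\ref{dv_eq2}); equivalently, the three edge offsets $\delta_i,\delta_j,\delta_k$ around the collapsed face need not close up to a triangle. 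So the paper never has to realise the offsets as differences of vertex perturbations, and the degeneration order condition is automatic because each $\delta_i$ was chosen nonzero. Consistent development then follows because developing through a valid chain simply reproduces, at each step, the $\Psi_\rho$-positions and the prescribed edge offsets.

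Your version, by contrast, requires the first-order differences $\delta_v-\delta_{v'}$ of vertex perturbations to be nonzero along every edge of $\til{\mathcal{E}}_0$, and you correctly flag this as the crux. But this gap is real and not obviously fillable within your framework. If $\rho$ happens to be infinitesimally rigid and all cusps are parabolic, then every cocycle is a coboundary $\gamma\mapsto\mathrm{Ad}_{\rho(\gamma)}X-X$, the lift $\rho'$ is a $\mathrm{PSL}_2(\C[[\zeta]])$-conjugate of $\rho$, and the unique fixed point of each $\rho'(P_{v_i})$ is the conjugate of $w_i$; hence $\overline{\Phi}$ is a global conjugate of $\Psi_\rho$ and all $\delta_v-\delta_{v'}$ vanish identically on $\til{\mathcal{E}}_0$. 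Your claim that the peripheral parameter at a parabolic cusp ``can be taken free'' is not correct as stated: a parabolic $\rho'(P_{v_i})$ has a unique fixed end in $T_\zeta$, and a non-parabolic perturbation produces fixed points only after adjoining $\zeta^{1/2}$. Even outside the rigid case, showing that a generic cocycle separates every such pair is a cohomological statement you have not established. The paper's edge-based assignment sidesteps all of this: it buys exactly the freedom the variety actually demands and no more.

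Two smaller omissions: you do not verify condition 3 of definition \ref{xdv} (existence of a type $111$ triangle), which the paper deduces from $|\Psi_\rho(\til{\mathcal{V}})|\geq 3$ and omnipresence at the start of its proof; and for a type $31$ tetrahedron you should check that the two recorded angles really satisfy only (\ref{dv_eq1}), which in the paper's setup comes out as $(\delta_j/\delta_i)(\delta_k/\delta_j)(\delta_i/\delta_k)=1$, automatic.
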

\begin{proof}
  
By assumption $S(\Psi_\rho)$ is porous. Now suppose for contradiction that condition 3 of Definition \ref{xdv} fails. Then all tetrahedra are of types 22, 31 or 4. They cannot all be of type 4 since then $\left|\Psi_\rho(\til{\mathcal{V}})\right|=1$. $\til{R_{\text{in}}}$ is connected by condition 1 of porousity and intersects edges of every vertex in $\til{\mathcal{V}}$ by condition 2, so we can follow chains of triangles that contiguously intersect $\til{R_{\text{in}}}$ starting from some triangle $\tri$ of type 21, and going to each vertex. Every triangle we move through is of type 21, and we see that the vertices of $\til{\mathcal{V}}$ fall into two sets, those that are connected by paths of edges in $\til{\mathcal{E}}_0$ to either the pair of vertices of $\tri$ connected by an edge of $\til{\mathcal{E}}_0$, or the other vertex of $\tri$. Vertices from these two sets are never connected by an edge of $\til{\mathcal{E}}_0$, since that would give a bad loop. Thus in this case $\left|\Psi_\rho(\til{\mathcal{V}})\right|=2$.\\

So $S(\Psi_\rho)$ is a horo-normal surface that satisfies all of the conditions of Definition \ref{xdv}. We now need to construct $Z_\rho \in \widehat{\mathfrak{D}}(M;\mathcal{T};S(\Psi_\rho))$ such that $\mathfrak{R}_{(\mathcal{T};S(\Psi_\rho))}(Z_\rho) = \rho$ up to conjugation. Because of the definition of $\mathfrak{R}_{(\mathcal{T};S(\Psi_\rho))}$ (Definition \ref{map_to_R(M)}) and the fact that elements of $\PSL$ are determined by their action on 3 distinct points of $\bdry\Hthree$, it is enough to construct $Z_\rho$ so that $\Psi_\rho = \Phi_{Z_\rho}$ as maps $\til{\mathcal{V}}\rightarrow\bdry\Hthree$.\\

Fix a conjugation of $\Hthree$ so that $\infty \notin \Psi_\rho(\til{\mathcal{V}})$. $\Psi_\rho$ determines the position on $\bdry \Hthree$ of every vertex of $\til{\mathcal{V}}$, and each edge of $\til{\mathcal{E}_0}$ has both endpoints in the same position. To determine the data (see Definition \ref{xdv_data}) for a tetrahedra of type 1111 in $\mathcal{T}$ we simply read off the cross ratio given by the positions of the vertices of one of its lifts in $\til{\mathcal{T}}$. The answer we get is independent of the choice of lift since elements of $\PSL$ preserve cross ratios and lifts are taken to each other by deck transformations $\gamma\in\pi_1M$, and their images in $\bdry \Hthree$ taken to each other by $\rho(\gamma) \in \PSL$. We will use a similar construction to deal with the degenerate tetrahedra:\\

For each $e_i\in \mathcal{E}_0$, arbitrarily choose a lift $\til{e}_i\in \til{\mathcal{E}_0}$. We also arbitrarily choose an {\bf offset} $\delta_i \in \C\setminus\{0\}$ for $\til{e}_i$, viewing it as a directed edge between its endpoints $u_i,v_i\in \til{\mathcal{V}}$. The idea is that although $\Psi_\rho(u_i) = \Psi_\rho(v_i)$, we want to introduce some extra information to talk about the difference between the two positions, namely that the difference will be $\delta_i\zeta  \in \C[[\zeta]]$. We extend the choice of offset to all other lifts of edges in $\mathcal{E}_0$ using $\rho$: if $\til{e}_i$ and $\til{e}'_i$ are lifts of $e_i \in \mathcal{E}_0$ with endpoints $u_i, v_i, u_i',v_i'$ then there is some deck transformation $\gamma\in\pi_1M$ such that $\gamma \til{e}_i=\til{e}'_i$.  If $\Psi_\rho(u_i) = \Psi_\rho(v_i) = x \in \C$ then $\rho(\gamma)(x) = \Psi_\rho(u_i') = \Psi_\rho(v_i')$.  If 
$$\rho(\gamma) = \left(\begin{array}{cc}
a&b\\
c&d\end{array}\right)$$
with determinant 1 then
$$\left(\begin{array}{cc}
a&b\\
c&d\end{array}\right)
\left(\begin{array}{c}
x + \delta_i\zeta\\
1\end{array}\right) = 
\left(\begin{array}{c}
ax + a\delta_i\zeta + b\\
cx + c\delta_i\zeta + d\end{array}\right)$$
and
$$\frac{ax + a\delta_i\zeta + b}{cx + c\delta_i\zeta + d} = \frac{ax+b}{cx+d} + \frac{\delta_i\zeta }{(cx+d)^2} + (\text{h.o.t. in }\zeta)$$
$\frac{ax+b}{cx+d} = \Psi_\rho(u_i') = \Psi_\rho(v_i')$, and we take the offset for $\til{e}'_i$ to be $\frac{\delta_i }{(cx+d)^2} \in \C \setminus \{0\}$. One can verify that this choice is consistent in that we get the same answer under action by products of elements of $\PSL$. Note also that the only element of $\pi_1M$ that fixes an edge of $\til{\mathcal{E}}$ is the identity element, which of course fixes the offset.\\

We can also see the consistency as follows: consider four points $x+\delta_i\zeta, y,x,w \in \C[[\zeta]]$, where $x,y,w\in\C$ are distinct. The cross ratio $z$ of these four points is preserved under elements of $\PSL \subset \text{PSL}_2\big(\C((\zeta))\big)$, and by Lemma \ref{*det_cr}, 
$$z_* = \frac{\delta_i(y-w)}{(x-w)(y-x)}$$
So $\delta_i$ is determined by $z_*$ and $x,y,w$. If we then apply some combination of elements of $\PSL$ then $z_*$ stays fixed and our new $\delta_i$ is determined by the new positions for $x,y,w$, which are independent of the combination of elements of $\PSL$ that take us here. \\

So we have a $\delta_i$ assigned to each $\til{e}_i\in\til{\mathcal{E}}_0$. We use these and Lemma \ref{*det_cr} to read off the lowest order information of the preferred cross ratios for tetrahedra of types 211, and 22. See Figure \ref{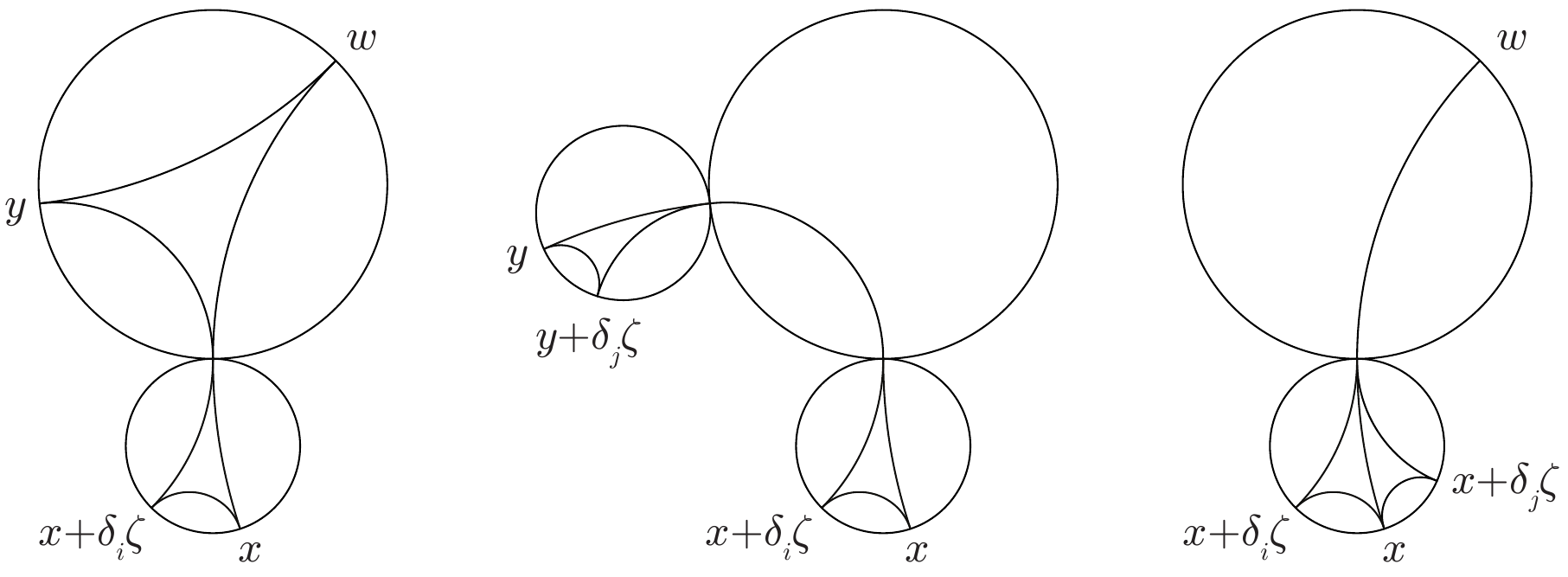}. Again the answer we get is independent of the choice of lift of tetrahedron. For tetrahedra of type 31 we only care about the dihedral angle between pairs of triangles that meet at an edge in $\til{\mathcal{E}}_+$. We can read this cross ratio off as 
$$z = \frac{(x-(x+\delta_j\zeta))(w-(x+\delta_i\zeta))}{(x - (x+\delta_i\zeta))(w-(x+\delta_j\zeta))}=\frac{\delta_j(w-(x+\delta_i\zeta))}{\delta_i(w-(x+\delta_j\zeta))}$$
so $z_* = \delta_j/\delta_i$.\\

\begin{figure}[htb]
\centering
\includegraphics[width=1.0\textwidth]{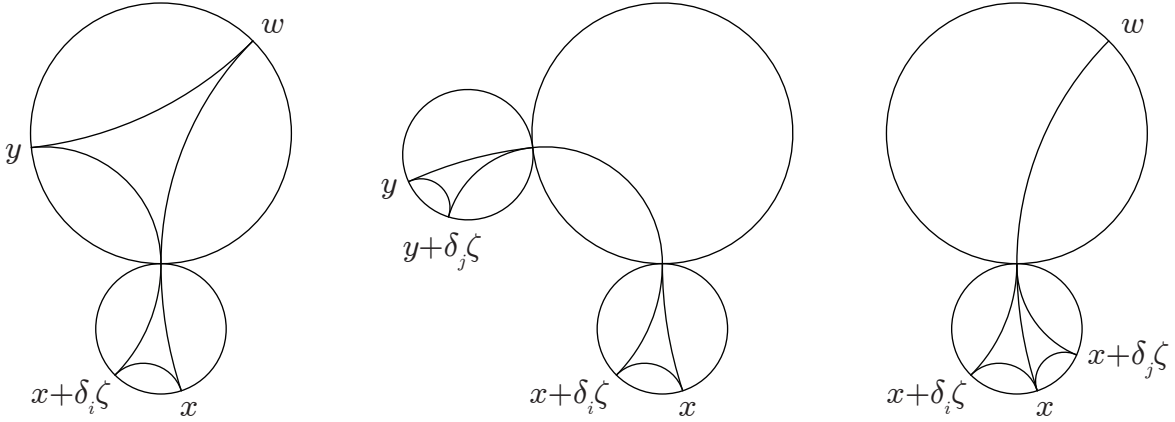}
\caption{Positions of vertices of 211, 22, 31 tetrahedra. The larger circle represents $\bdry \Hthree \cong \C[\zeta]/(\zeta)\cup\{\infty\}$. The smaller circles represent the set of points of the form $x + \zeta x'$ where $x'\in\C[[\zeta]]$. Offsets $\delta_i$ and $\delta_j$ may or may not be associated to edges that are $\pi_1M$-translates of each other.}
\label{read_off_cross_ratios.pdf}
\end{figure}

Notice that we do not require that the offset of the third edge of the 31 tetrahedron together with the first two link up to form a triangle. We track the first order offset (difference) between two points with the same position on $\bdry\Hthree$, but not the absolute first order positions.\\

We now have the data for a point of the extended deformation variety, we need to show that these choices satisfy the consistent development condition. Suppose that we have two triangles $\tri_1$ and $\tri_2$ which share an edge in $\til{\mathcal{E}}_+$, the positions on $\bdry\Hthree$ of the vertices of $\tri_1$ as given by $\Psi_\rho$ and the offset for any edge of $\tri_1$ in $\til{\mathcal{E}}_0$, together with the cross ratio data for the dihedral angle. Then Lemma \ref{*det} tells us that we can recover the position on $\bdry\Hthree$ of the vertex of $\tri_2$ not shared with $\tri_1$, and any offsets for edges of $\tri_2$ in $\til{\mathcal{E}}_0$. As we develop through valid chains we always get the correct answer (agreeing with $\Psi_\rho(\til{\mathcal{V}})$, and with our offsets) no matter which chain of triangle we develop along, so we get consistent development. So if we start developing from a triangle with vertex positions agreeing with $\Psi_\rho$, we get $\Psi_\rho = \Phi_{Z_\rho}$ as maps $\til{\mathcal{V}}\rightarrow\bdry\Hthree$.
\end{proof}

\begin{proof}[Proof of Theorem \ref{xdv_for_all_rho_if_all_horo_omni}]
This follows immediately from Theorem \ref{xdv_for_a_rho} and Lemma \ref{irred_not_dih_works}. \end{proof}

\section{Examples, part 2: The once punctured torus bundle with monodromy $LLR$, revisited}\label{Example, part 2}
We return to the example of Section \ref{LLR_ex_part_1}. The component satisfying $ij=1$ in $\mathfrak{D}(M_{LLR};\mathcal{T}_4)$ has $hk=1$, so the top right diagram of Figure \ref{LLR_4_vs_5_tetra.pdf} has all of the ``equatorial'' dihedral complex angles being 1. (The back dihedral angle is obviously 1, the other two dihedral angles turn out to be 1 via the equations internal to each tetrahedron, (\ref{dv_eq1}) and (\ref{dv_eq2}).) Then the north and south vertices of the two tetrahedra are in the same place on $\bdry \Hthree$, and so the added edge $e$ in $\mathcal{T}_5$ is the single edge in $\mathcal{E}_0$. The corresponding horo-normal surface $S$ is shown in Figures \ref{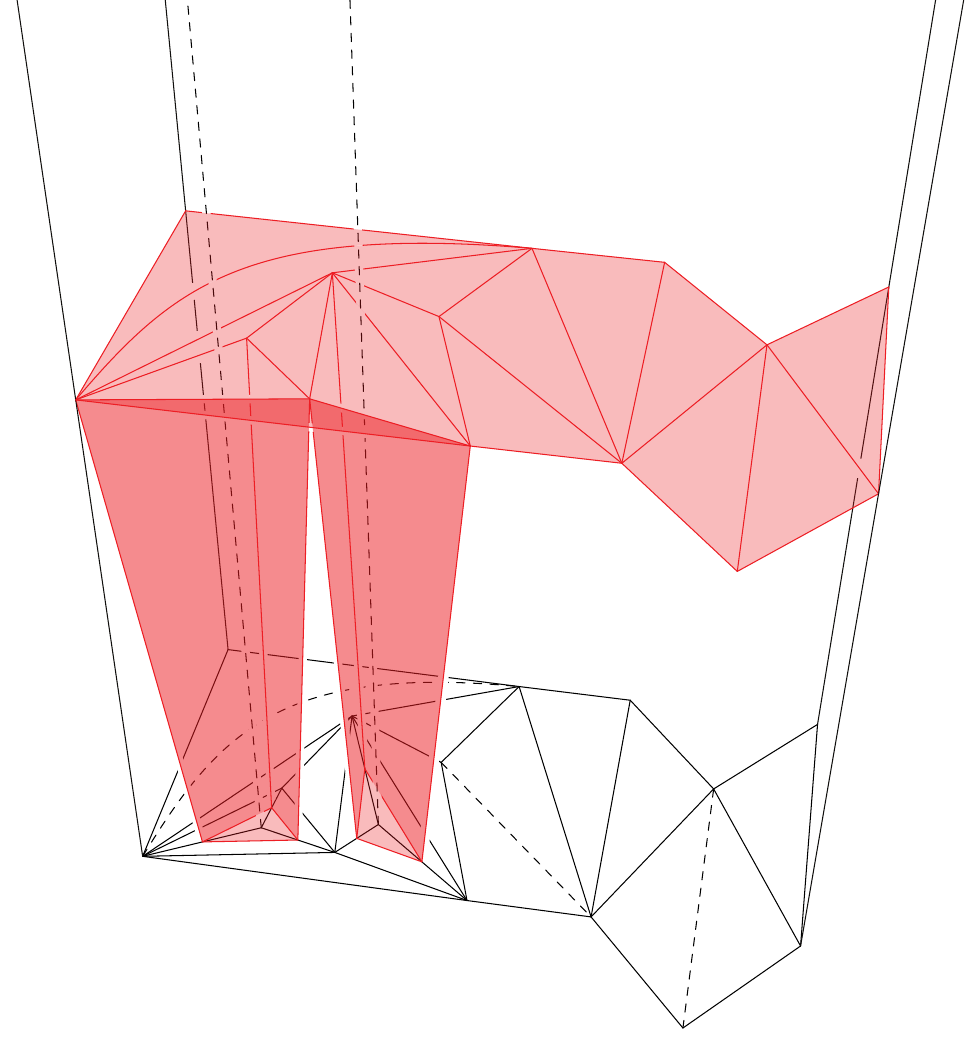} and \ref{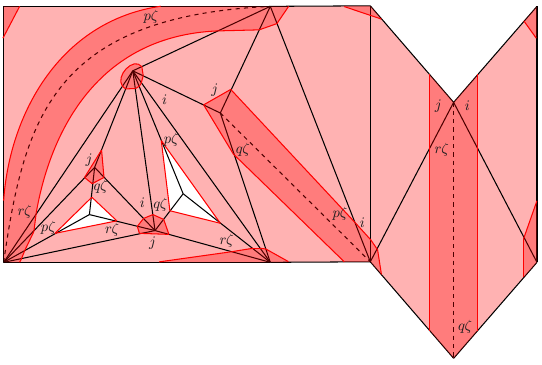}.\\

\begin{figure}[htb]
\centering
\includegraphics[width=0.6\textwidth]{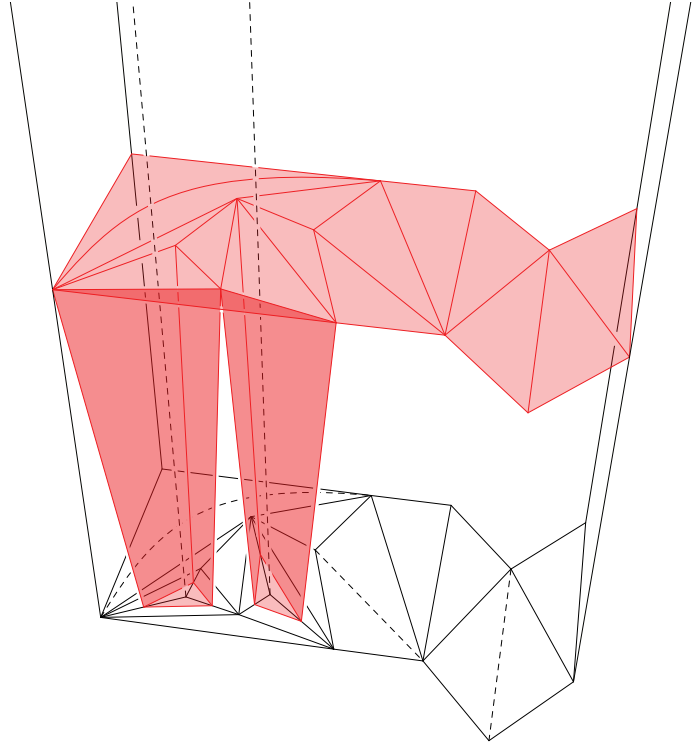}
\caption{Tetrahedra incident to one fundamental domain of the boundary torus, as seen in perspective. Only some of the vertical edges are shown (each vertex below has an edge above it going towards the vertex at infinity). The dashed lines are all translates of the one edge $e \in \mathcal{E}_0$. Shown are some of the pieces of the horo-normal surface:  tubes made from quadrilaterals around the vertical lifts of $e$ and triangles nearest the vertex at infinity.}
\label{LLR_5_tetra_3_in_E0_perspective.pdf}
\end{figure}

\begin{figure}[htb]
\centering
\includegraphics[width=1.0\textwidth]{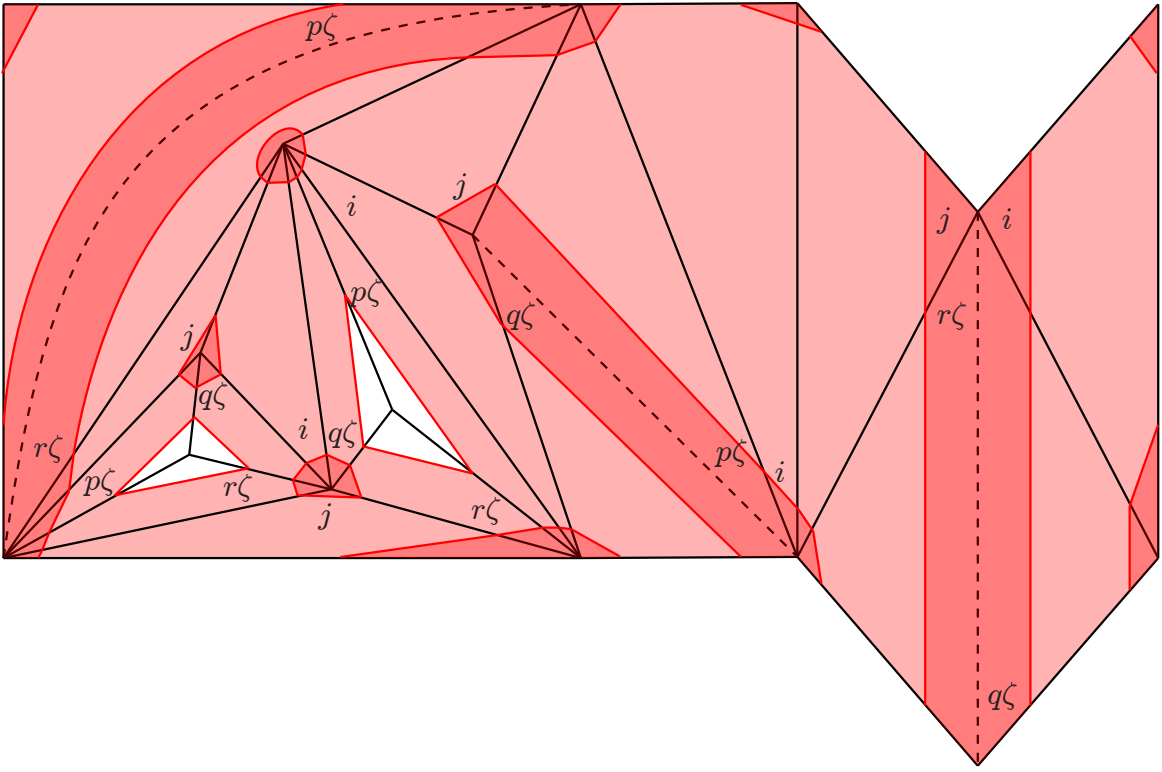}
\caption{The view from above. Shown here are all parts of the horo-normal surface. Each edge of $\mathcal{E}_+$ intersects the surface twice, the edge $e\in \mathcal{E}_0$ (dashed) is disjoint from the surface. Tetrahedra labelled with dihedral angles $i$ and $j$ are of type 1111, $p\zeta,q\zeta $ and $r\zeta $ are type 211.}
\label{LLR_5_tetra_3_in_E0_above.pdf}
\end{figure}

As in Figure \ref{LLR_5_tetra_3_in_E0_above.pdf}, all tetrahedra are of types 1111 or 211, and so $\til{R_\text{in}}$ is connected. (If there are no tetrahedra of type 31 or 4 then all triangles are of type 111 or 21, and so $\til{R_\text{in}}$ connects through the center of each triangular face of $\til{\mathcal{T}}$.) In this example, if we perform one compression move to the surface $S$ we obtain a boundary parallel torus, and consistent development for $\widehat{\mathfrak{D}}(M_{LLR};\mathcal{T}_5;S)$ is achieved if we have the gluing equations (or rather, lowest order versions, using angles as in Figure \ref{4_tetra_types_w_dihedral_angles.pdf}) for each edge apart from $e \in \mathcal{E}_0$, together with consistency for a chain of triangles going around $e$. The gluing equation (\ref{ijpqr_first}) is gone, the gluing equations (\ref{ijpqr_second}) through (\ref{ijpqr_last}) become:

\begin{eqnarray}
\label{ijpqr_new_first} ijq\zeta(-q^{-1}\zeta^{-1})r\zeta(-r^{-1}\zeta^{-1}) &=& 1 \\
\frac{i-1}{i}j(-p^{-1}\zeta^{-1})q\zeta&=& 1 \\
i \frac{1}{1-i} \frac{j-1}{j} \frac{1}{1-j}p\zeta(-q^{-1}\zeta^{-1})1&=& 1 \\
\label{ijpqr_new_last} \frac{i-1}{i} \frac{1}{1-i}\frac{j-1}{j}\frac{1}{1-j} 1 p\zeta(-p^{-1}\zeta^{-1})1r\zeta(-r^{-1}\zeta^{-1})&=&1
\end{eqnarray}
which simplify (caring only about lowest order) to:
\begin{eqnarray}
 ij &=& 1 \\
\frac{i-1}{i}j\left(-\frac{q}{p}\right)&=& 1 \\
\frac{i}{i-1} \frac{1}{j} \left(-\frac{p}{q}\right)&=& 1 \\
\frac{1}{ij} &=&1
\end{eqnarray}
There are obvious redundancies here. To see the consistency for a chain of triangles going around $e$, consider developing around the left hand vertical tube in Figure \ref{LLR_5_tetra_3_in_E0_perspective.pdf}. There are three triangles, all of which share the vertex at infinity. The dihedral angles between those triangles are $p\zeta(-r^{-1}\zeta^{-1})=-\frac{p}{r}, q\zeta(-p^{-1}\zeta^{-1})=-\frac{q}{p}$, and $r\zeta(-q^{-1}\zeta^{-1})=-\frac{r}{q}$. The three triangles are three faces of a tetrahedron (in fact the tetrahedron labelled $h$ in $\mathcal{T}_4$, from before the 2-3 move, see Figure \ref{LLR_4_vs_5_tetra.pdf}), and consistent development around these triangles is the same as equations (\ref{dv_eq1}) and (\ref{dv_eq2}) for that tetrahedron. The first is satisfied automatically, and the second simplifies to the last equation we need for $\widehat{\mathfrak{D}}(M_{LLR};\mathcal{T}_5;S)$:
\begin{equation}
p+q+r=0
\end{equation}

\begin{rmk}
We have three independent equations in five variables, and so this variety is 2-dimensional, whereas the corresponding component of the deformation variety with triangulation $\mathcal{T}_4$ is 1-dimensional. The extra dimension comes from the choices of $p,q$ and $r$, all of which can be scaled by some constant at once to give another point of $\widehat{\mathfrak{D}}(M_{LLR};\mathcal{T}_5;S)$, and the scaled and original points map to the same representation in $\mathfrak{R}(M)$.
\end{rmk}

\section{Application: the $\PSL$ A-polynomial}\label{A-polynomial}

\subsection{Definitions}
The A-polynomial was introduced in \cite{CCGLS}, and originally defined for the $\text{SL}_2(\mathbb{C})$ character variety. For the $\PSL$ version, we follow \cite{champanerkar_thesis}. Assume that $N$ is a 3-manifold with $\bdry N$ being a single torus boundary component and choose generators $L,M\in \pi_1\bdry N$. Let $X(N), X(\bdry N)$ be the $\PSL$ character varieties of $N$ and $\bdry N$ respectively, and $r:X(N)\to X(\bdry N)$ the restriction. Let $\Delta \subset \mathfrak{R}(\bdry N)$ be the subvariety consisting of diagonal representations. Let $p_B:\Delta\to\mathbb{C}^*\times\mathbb{C}^*$ be an isomorphism given as follows: If $\rho\in\Delta$ is such that $\rho(L)=\pm\left(\begin{array}{cc}l&0\\0&l^{-1}\end{array}\right), \rho(M)=\pm\left(\begin{array}{cc}m&0\\0&m^{-1}\end{array}\right)$, then $p_B(\rho)=(l^2,m^2)$. Let $t:\mathfrak{R}(N)\to X(N)$ be the quotient map and $t_\Delta:\Delta\to X(N)$ the restriction to $\Delta$, which is a surjection and generically 2-to-1. Let $X'(N)$ be the union of irreducible components $Y'$ of $X(N)$ such that the closure of $r(Y')$ is 1-dimensional. For each component $W'$ of $X'(N)$ let $W$ be the curve $t_\Delta^{-1}(\overline{r(Y')})\subset \Delta$. Let $D_N$ be the union of curves $W$ as $W'$ varies over all components of $X'(N)$. 
\begin{defn}
The defining polynomial of the closure of the image of $D_N$ in $\mathbb{C}^*\times\mathbb{C}^*$ is called the $\PSL$ A-polynomial of $N$ and denoted by $A_N(l,m)$.
\end{defn}

\subsection{Calculating eigenvalues}

\begin{lemma}\label{can_define_Hol}
There is a well-defined rational map $\text{Hol}:\widehat{\mathfrak{D}}(N;\mathcal{T};S)\to \mathbb{C}^*\times\mathbb{C}^*$ such that the following diagram commutes:
\[
\begin{CD}
X(N)       @<t\circ\mathfrak{R}_{(\mathcal{T};S)}<< \widehat{\mathfrak{D}}(N;\mathcal{T};S) \\
@VrVV            @VV \text{Hol} V\\
X(\bdry N) @<p_B^{-1}\circ t_\Delta<< \mathbb{C}^*\times\mathbb{C}^*
\end{CD}
\]
\end{lemma}
\begin{proof}
Let $Z\in \widehat{\mathfrak{D}}(N;\mathcal{T};S)$, and suppose we have a choice of initial triangle $\tri\in\til{\mathcal{T}}$ with vertices $v,v',v''\in\til{\mathcal{V}}$ where all edges of $\tri$ are in $\til{\mathcal{E}}_+$, and a choice of ideal triangle with vertices $e_0,e_0',e_0''\in\bdry\Hthree$ for the image of $\tri$. Then we have the developing map $\Phi_Z:\til{V}\rightarrow \bdry\Hthree$ as in Definition \ref{dev_map}. For now we assume that $\infty \notin \Phi_Z(\til{V})$. As in the proof of Theorem \ref{is_variety}, the developed positions of cusps are rational functions of the $z^{(i)}_*$ (the lowest non-zero order terms of the cross ratios) and $e_0,e_0',e_0''$.\\

Let $L',M'\in\pi_1(N)$ be images of $L,M$ under the injection $\pi_1(\bdry N)\hookrightarrow\pi_1(N)$ chosen so that the deck transformations of $\til{N}$ corresponding to $L',M'$ fix $v$. In order to calculate $\Phi_Z(L'(v))$, $\Phi_Z(L'(v'))$ and $\Phi_Z(L'(v''))$ we choose a valid chain of triangles starting with $\tri$ and ending at $L'(\tri)$. By the consistent development condition, the values of the developed positions are independent of the particular choice of valid chain. However, as $L'$ is peripheral, we can choose such a chain which follows along the lift of the horo-normal surface which bounds the component $\left(\til{R_{\text{out}}}\right)_0$ of $\til{R_{\text{out}}}$ that contains $v$, and then by Lemma \ref{verts_thru_S_coincide} we get that $\Phi_Z(L'(v))=\Phi_Z(v)$.\\

Now conjugate the whole picture to move $(e_0,e_0',e_0'')$ to $(0,1,\infty)$. We still call the developing map $\Phi_Z$, so now $\Phi_Z((v,v',v''))=(\infty,0,1)$. All developed positions other than those that are now at $\infty$ are still rational functions, now of only the $z^{(i)}_*$. \\

Then $\Phi_Z(L'(\tri))=(\infty,b,b+a)$ for some developed cusp positions $b, b+a$, and so $b$ and $a$ are rational functions of the $z^{(i)}_*$. The corresponding element of $\PSL$ is $\rho_Z(L')=\pm\left(\begin{array}{cc}\sqrt{a}&b/\sqrt{a}\\0&1/\sqrt{a}\end{array}\right)$, and the square of the eigenvalue is $a$, which is therefore a rational function of the $z^{(i)}_*$. Similarly for $M$, and we have constructed a rational map $\text{Hol}:\widehat{\mathfrak{D}}(N;\mathcal{T};S)\to \mathbb{C}^*\times\mathbb{C}^*$ in such a way that the above diagram commutes.
\end{proof}

See equations \ref{calc_merid} and \ref{calc_long}, and figure \ref{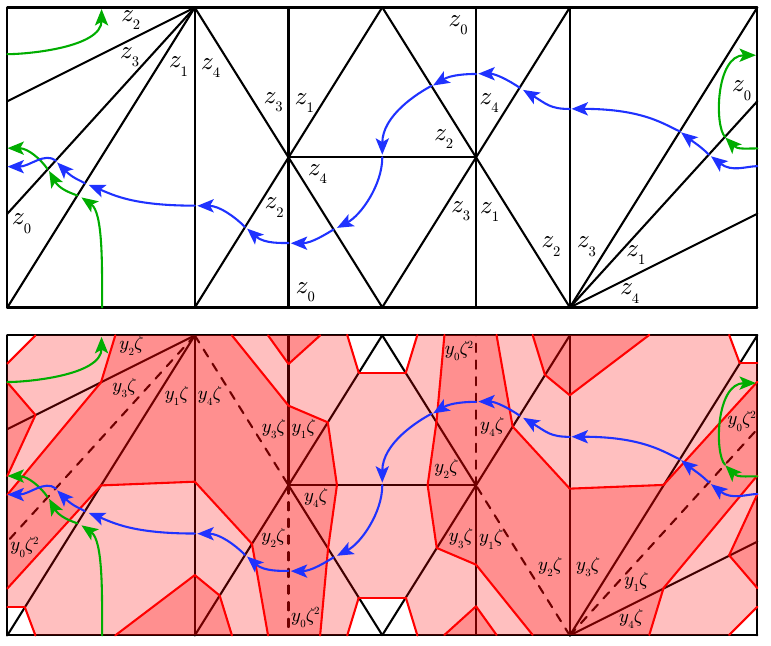} for an example of how to compute Hol in practice. One travels along the path, picking up a factor (resp. its inverse) when the path rotates anti-clockwise (resp. clockwise) around the corner of a triangle. The factors are as shown in Figure \ref{4_tetra_types_w_dihedral_angles.pdf}. Note that the $\zeta$ terms will always cancel with each other.\\

Now let $X_S = \cup Y_i$ where $Y_i$ is a component of $\widehat{\mathfrak{D}}(N;\mathcal{T};S)$ whose closure of the image under Hol is a curve in $\mathbb{C}\times\mathbb{C}$. 
\begin{defn}
If $X_S\neq \emptyset$, the image $\text{Hol}(X_S)$ is called the {\bf holonomy variety} with respect to the triangulation $\mathcal{T}$ and horo-normal surface $S$, and is denoted by $H^{(\mathcal{T};S)}(N)$. The defining polynomial of the closure of $H^{(\mathcal{T};S)}(N)$ is denoted by $H^{(\mathcal{T};S)}(l,m)$. 
\end{defn} 

\begin{thm}\label{thm_get_A-poly}
Let $\mathcal{T}_*$ be an ideal triangulation of $N$ for which every surface in horo-normal form relative to $\mathcal{T}_*$ is porous. Then the polynomials $H^{(\mathcal{T}_*;S)}(l,m)$, ranging over each horo-normal surface $S$, contain all factors of the $\PSL$ A-polynomial of $N$ associated to components of irreducible non-dihedral representations.
\end{thm}

\begin{proof}
By Lemma \ref{can_define_Hol}, for each choice of $S$ for which $\widehat{\mathfrak{D}}(N;\mathcal{T};S) \neq \emptyset$, $H^{(\mathcal{T};S)}(l,m)$ divides the $\PSL$ A-polynomial. Moreover, by Lemma \ref{irred_not_dih_works} and Theorem \ref{xdv_for_a_rho},  $\mathfrak{R}_{\mathcal{T_*}}:\widehat{\mathfrak{D}}(M;\mathcal{T_*}) \rightarrow \mathfrak{R}(M)$ is onto the irreducible non-dihedral representations.\end{proof}

\begin{proof}[Proof of Theorem \ref{thm_get_A-poly2}]
Combine Theorem \ref{thm_get_A-poly} with Corollary \ref{get_all_porous}.
\end{proof}

\subsection{Examples, part 3: The knot $8_{20}$}\label{8 20}

Marc Culler has compiled a list of A-polynomials of knot complements, using the standard deformation variety associated to triangulations of knots calculated by Joe Christy and included in SnapPy~\cite{snappy}. The list is available at \url{http://www.indiana.edu/~knotinfo/references/a_polys_table_glueing.html}. See also \url{http://www.math.uic.edu/~culler/talks/apolynomials.pdf} for details on how the calculations are performed. As the standard deformation variety is used in these calculations, Culler does not claim that all factors of the A-polynomial are listed. Indeed, Thomas Mattman~\cite{mattman02} shows that the knot $8_{20}$ must have two factors, one of which is missing in Culler's calculations, which list the following expression for the A-polynomial of $8_{20}$:

\begin{eqnarray*}
m^{10} &+& l(1 - m^2 + 2m^4 - 2m^6 - m^8 + 5m^{10} + m^{12}) \\
 &+& l^2(-1 + 5m^2 - 3m^6 + 3m^8 + 4m^{12} + 2m^{14}) \\
 &+& l^3(2m^4 + 4m^6 + 3m^{10} - 3m^{12} + 5m^{16} - m^{18}) \\
 &+& l^4(m^6 + 5m^8 - m^{10} - 2m^{12} + 2m^{14} - m^{16} + m^{18}) \\
 &+& l^5m^8
 \end{eqnarray*}

In Figure \ref{knot_8_20_torus_bdry.pdf} (top) we see the triangulation of the boundary torus induced by triangulation for the complement of the knot $8_{20}$ as given by SnapPy. We have chosen a labelling of the   angles to match with the preferred cross-ratios for the particular choice of horo-normal surface shown below.\\

One can check that the standard deformation variety for this triangulation has only one component by repeatedly solving an equation for one of the variables and substituting in until there is one polynomial in two variables, then checking that this polynomial does not factor. In cases with a small number of tetrahedra this is often possible to do. However, there is another component in the representation variety which is mapped to by the extended deformation variety with the horo-normal surface as shown in the figure.\\

\begin{figure}[htb]
\centering
\includegraphics[width=\textwidth]{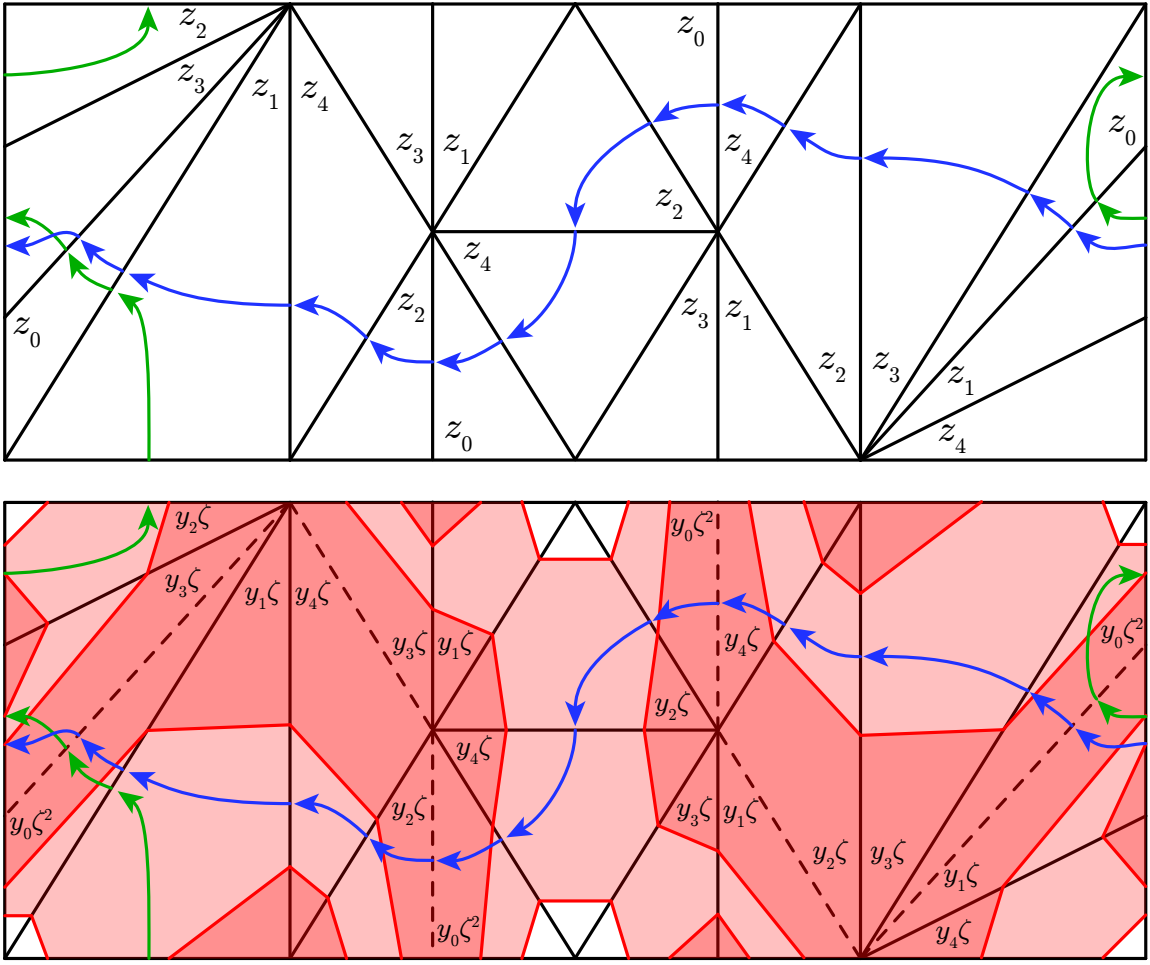}
\caption{Above: tetrahedra incident to one fundamental domain of the boundary torus of the knot $8_{20}$ as viewed from the cusp, with the triangulation as given by SnapPy~\cite{snappy}. The edges of the fundamental domain are identified in the obvious way, and the generators of the holonomy as given by SnapPy are also shown. Below: all parts of the horo-normal surface. Each edge of $\mathcal{E}_+$ intersects the surface twice, the edge in $\mathcal{E}_0$ (dashed) is disjoint from the surface.}
\label{knot_8_20_torus_bdry.pdf}
\end{figure}

As in the previous example, only one edge is in $\mathcal{E}_0$. This time four tetrahedra are of type 112 and one is of type 22. The equations for consistent development, and the holonomies of the meridian and longitude are:

\begin{eqnarray}
(y_0\zeta^2)(-y_3^{-1}\zeta^{-1})(-y_4^{-1}\zeta^{-1})&=&1\\
(-y_0^{-1}\zeta^{-2})(y_4\zeta)(-y_2^{-1}\zeta^{-1})(y_1\zeta)(y_3\zeta)(-y_4^{-1}\zeta^{-1})(y_2\zeta)&=&1\\
(-y_0^{-1}\zeta^{-2})(y_1\zeta)(y_4\zeta)(-y_3^{-1}\zeta^{-1})(-y_1^{-1}\zeta^{-1})(y_2\zeta)(y_3\zeta)&=&1\\
(y_0\zeta^2)(-y_2^{-1}\zeta^{-1})(-y_1^{-1}\zeta^{-1})&=&1\\
y_1 + y_2 + y_3 + y_4 &=&0\\
\label{calc_merid}(-y_0^{-1}\zeta^{-2})^{-1}(-y_3^{-1}\zeta^{-1})(-y_1^{-1}\zeta^{-1})^{-1}(y_0\zeta^2)^{-1} &=&m\\
\nonumber(-y_1^{-1}\zeta^{-1})^{-1}(-y_0^{-1}\zeta^{-2})(y_3\zeta)(y_4\zeta)(-y_0^{-1}\zeta^{-2})(y_2\zeta)\label{calc_long}(y_4\zeta)^{-1}(-y_0^{-1}\zeta^{-2})^{-1}&&\\
(y_2\zeta)^{-1}(y_1\zeta)^{-1}(-y_0^{-1}\zeta^{-2})^{-1}(-y_3^{-1}\zeta^{-1})&=&l
\end{eqnarray}

The first four consistency equations come from four of the usual gluing equations. The last equation arises from developing around the zero-length edge, similarly to as in the previous example. The equations simplify to:

\begin{eqnarray}
y_0&=&y_3y_4\\
y_1y_3&=&-y_0\\
y_2y_4&=&-y_0\\
y_0&=&y_1y_2\\
y_1 + y_2 + y_3 + y_4 &=&0\\
-y_1y_3^{-1}&=&m\\
1&=&l
\end{eqnarray}

We can solve these as $y_4=-y_1, y_3=-y_2, y_0 = y_1y_2$, where $y_1,y_2 \in \mathbb{C}\setminus\{0\}$. Once again, the variety is 2-dimensional, and one of the degrees of freedom comes from the choices of the $y_i$: $(y_0,y_1,y_2,y_3,y_4)$ and $(\lambda^2y_0,\lambda y_1,\lambda y_2,\lambda y_3,\lambda y_4)$ give the same representations, for any $\lambda\in \mathbb{C}\setminus\{0\}$. As for the holonomies, the longitude is constant whereas the meridian has no restriction. Therefore the factor of the $\PSL$ polynomial corresponding to this component is $H^{(\mathcal{T};S)}(l,m)=l-1$, which is clearly different from the factor calculated by Culler. 

\begin{rmk}
The results of this paper suggest that one can try to find extra factors of the A-polynomial with the standard deformation variety by retriangulating to remove edges that are of zero-length for the relevant component of the character variety.  On reading a draft of this paper, Culler did this experiment~\cite{culler_personal_communication}: Using randomised retriangulation, he found a triangulation of the complement of the knot $8_{20}$ which does not contain the zero-length edge. With this triangulation, his calculations do pick up the extra factor $(l-1)$.\\

Note that in general there is no guarantee that retriangulating to remove a bad edge will result in a  triangulation for which the deformation variety picks up a missing component. It is also possible that the edge could be removed but some other added edge be of zero length for the component, and then again the deformation variety would miss it.
\end{rmk}

\begin{rmk}
Stavros Garoufalidis and Thomas Mattman have recently calculated the A-polynomial for all non-abelian factors for all $(-2,3,n)$ pretzel knots using a recursion relation~\cite{garoufalidis_mattman}. The knot $8_{20}$ is the $(-2,3,-3)$ pretzel knot. See also \cite{garoufalidis_koutschan} where the A-polynomial for the $(-2,3,-3)$ pretzel knot is given explicitly. Their calculation also detects the $(l-1)$ factor coming from the component of irreducible representations that we find. There is also a component of abelian representations which give another $(l-1)$ factor.  Note that they work with the mirror image of the version of the manifold that Culler uses, so their polynomial differs by the map $l \mapsto 1/l$ as well as the added factor $(l-1).$ 
\end{rmk}

\section{Further questions}\label{Further questions}
\begin{enumerate}

\item How should we compactify the extended deformation variety, similarly to Tillmann's compactification of the standard deformation variety in \cite{tillmann_degenerations}?

\item If we can solve the previous question, how much of the Culler-Shalen machinery can we reproduce in the context of triangulations? The set of ideal points of the extended deformation variety for a 3-manifold with a triangulation with all horo-normal surfaces porous should contain ideal points corresponding to each ideal point of the character variety (for components not made up of reducible or dihedral representations).

\item Are there manifolds for which the standard deformation variety for \emph{every} triangulation ``misses'' some component seen by the extended deformation variety? 

\end{enumerate}

\bibliographystyle{../../hamsplain}
\bibliography{../../henrybib}

\providecommand{\bysame}{\leavevmode\hbox to3em{\hrulefill}\thinspace}
\providecommand{\href}[2]{#2}
\begin{thebibliography}{10}

\bibitem{bruhat-tits_building}
F.~Bruhat and J.~Tits, \emph{Groupes r\'{e}ductifs sur un corps local. {I}.
  {D}onn\'{e}es radicielles valu\'{e}es}, Publ. Math. IHES \textbf{41} (1972),
  5--251.

\bibitem{champanerkar_thesis}
Abhijit Champanerkar, \emph{A-polynomial and {B}loch invariants of hyperbolic
  3-manifolds}, Ph.D. thesis, Columbia University, 2003.

\bibitem{CCGLS}
Daryl Cooper, Marc Culler, Henri Gillet, Darren~D. Long, and Peter~B. Shalen,
  \emph{Plane curves associated to character varieties of 3-manifolds}, Invent.
  Math. \textbf{118} (1994), no.~1, 47--84.

\bibitem{culler_personal_communication}
Marc Culler, personal communication.

\bibitem{snappy}
Marc Culler, Nathan Dunfield, and Jeffrey~R. Weeks, \emph{{SnapPy}}, a computer
  program for studying the geometry and topology of 3-manifolds,
  http://snappy.computop.org.

\bibitem{cullershalen83}
Marc Culler and Peter Shalen, \emph{Varieties of group representations and
  splitting of 3-manifolds}, Annals of Mathematics \textbf{117} (1983),
  109--146.

\bibitem{garoufalidis_koutschan}
Stavros Garoufalidis and Christoph Koutschan, \emph{The non-commutative
  {A}-polynomial of (-2,3,n) pretzel knots}, \mbox{arXiv:1101.2844}.

\bibitem{garoufalidis_mattman}
Stavros Garoufalidis and Thomas~W. Mattman, \emph{The {A}-polynomial of the
  (-2,3,n) pretzel knots}, \mbox{arXiv:1101.1349}.

\bibitem{gueritaud_torus_bundles}
Fran\c{c}ois Gu\'eritaud and David Futer, \emph{On canonical triangulations of
  once-punctured torus bundles and two-bridge link complements}, Geometry \&
  Topology (2006), no.~10, 1239--1284.

\bibitem{mattman02}
Thomas~W. Mattman, \emph{The {Culler-Shalen} seminorms of the $(-2,3,n)$
  pretzel knot}, J. Knot Theory Ramifications \textbf{11} (2002), no.~8,
  1251--1289.

\bibitem{matveev}
Sergei Matveev, \emph{Algorithmic topology and classification of 3-manifolds},
  second ed., Springer, 2007.

\bibitem{ohtsuki97}
Tomotada Ohtsuki, \emph{How to construct ideal points of ${SL}_2(\mathbb{C})$
  representation spaces of knot groups}, Topology and its Applications
  \textbf{93} (1999), 131--159.

\bibitem{segerman_torus_bundles}
Henry Segerman, \emph{Detection of incompressible surfaces in hyperbolic
  punctured torus bundles}, Geometriae Dedicata \textbf{150} (2011), no.~1,
  181--232.

\bibitem{st_essential}
Henry Segerman and Stephan Tillmann, \emph{Pseudo-developing maps for ideal
  triangulations {I}: Essential edges and generalised hyperbolic gluing
  equations}, Topology and Geometry in Dimension Three: Triangulations,
  Invariants, and Geometric Structures (Proceedings of the Jacofest
  conference), AMS Contemporary Mathematics, vol. 560, 2011, pp.~85--102.

\bibitem{serre_trees}
Jean-Pierre Serre, \emph{Trees}, Springer, 2003.

\bibitem{handbook_shalen}
Peter Shalen, \emph{Representations of 3-manifold groups}, Handbook of
  Geometric Topology (R.B. Sher and R.J. Daverman, eds.), North Holland, first
  ed., 2001.

\bibitem{thurston}
William Thurston, \emph{Geometry and topology of 3-manifolds}.

\bibitem{tillmann_degenerations}
Stephan Tillmann, \emph{{Degenerations of ideal hyperbolic triangulations}},
  Mathematische Zeitschrift, in press, \mbox{arXiv:math.GT/0508295}.

\bibitem{tillmann_norm_surf}
\bysame, \emph{{Normal surfaces in topologically finite 3-manifolds}},
  L'Enseignement Math{\'e}matique \textbf{54} (2008), 329--380.

\bibitem{snappea}
Jeffrey Weeks, \emph{{SnapPea}}, a computer program for creating and studying
  hyperbolic 3-manifolds, available from http://www.geometrygames.org/SnapPea/.

\bibitem{yoshida91}
Tomoyoshi Yoshida, \emph{On ideal points of deformation curves of hyperbolic
  3-manifolds with one cusp}, Topology \textbf{30} (1991), no.~2, 155--170.

\end{thebibliography}
\end{document}